\title{On the profile of trees with a given degree sequence}
\date{}
 \theoremstyle{plain}
\newtheorem{theorem}{Theorem}
\newtheorem{lemma}{Lemma}
\newtheorem{corollary}{Corollary}
\newtheorem{proposition}{Proposition}
\theoremstyle{definition}
\newtheorem{example}{Example}
\newtheorem*{definition}{Definition}
\newtheorem*{remark}{Remark}
\newcommand{\dunderline}[1]{\underline{\underline{#1}}}
\newcommand{\bo}[1]{\mathbf{#1}}
\newcommand{\dispand}{\quad\text{and}\quad}
\newcommand{\barbare}[1]{\ensuremath{ \overline{\overline{#1}} }}
\author{Osvaldo Angtuncio\thanks{Universidad Nacional Aut\'onoma de M\'exico.\hfill \texttt{osvaldo.angtuncio@matem.unam.mx}} \qquad Ger\'onimo Uribe Bravo\thanks{Universidad Nacional Aut\'onoma de M\'exico.\hfill \texttt{geronimo@matem.unam.mx}}}
\newcommand{\p}{\ensuremath{ \mathbb{P}  } }
\newcommand{\mc}[1]{\ensuremath{\mathscr{#1}}}
\newcommand{\na}{\ensuremath{\mathbb{N}}}
\newcommand{\set}[1]{\ensuremath{\left\{ #1\right\} }}
\newcommand{\proba}[1]{\ensuremath{\p\! \left( #1 \right)}}
\newcommand{\imf}[2]{\ensuremath{#1\!\paren{#2}}}
\newcommand{\paren}[1]{\ensuremath{\left( #1\right) }}
\newcommand{\fun}[3]{\ensuremath{#1:#2\to #3}}
\newcommand{\re}{\ensuremath{\mathbb{R}}}
\newcommand{\cadlag}{c\`adl\`ag}
\newcommand{\defin}[1]{{\bf #1}}
\DeclareMathOperator{\id}{Id} %
\newcommand{\esp}[1]{\ensuremath{\se\! \left( #1 \right)}}
\newcommand{\se}{\ensuremath{\mathbf{E}}}
\newcommand{\bra}[1]{\ensuremath{\left[ #1\right] }}
\newcommand{\floor}[1]{\ensuremath{\lfloor #1\rfloor}}
\newcommand{\abs}[1]{\hspace{.25mm}\left|#1\right|\hspace{.25mm}}
\newcommand{\eps}{\ensuremath{ \varepsilon}}
\newcommand{\indi}[1]{\mathbf{1}_{#1}}
\newcommand{\F}{\ensuremath{\mathscr{F}}}
\newcommand{\ceil}[1]{\ensuremath{\lceil #1 \rceil}}
\begin{document}
\maketitle
\begin{abstract}
		A \emph{degree sequence} is a sequence $\bo{s}=(N_i,i\geq 0)$ of non-negative integers 
		satisfying $1+\sum_i iN_i=\sum_i N_i<\infty$. 
		We are interested in the uniform distribution  $\p_{{\bf s}}$  on rooted plane trees 
		whose degree sequence equals ${\bf s}$, 
		giving conditions for the convergence of the profile 
		(sequence of generation sizes) 
		as the size of the tree 
		goes to infinity. 
		This provides a more general  formulation 
		and a probabilistic proof 
		of a conjecture due to Aldous \cite{MR1166406}. 
		Our formulation contains and extends results in this direction 
		obtained previously by Drmota and Gittenberger \cite{MR1608230} and Kersting \cite{kersting2011height}. 
		A technical result is needed to ensure that trees with law $\p_{{\bf s}}$ 
		have enough individuals in the first generations, 
		and this is handled 
		through novel path transformations and fluctuation theory of exchangeable increment processes. 
		As a consequence, 
		we obtain
		a boundedness criterion for  the inhomogeneous continuum random tree 
		introduced by Aldous, Miermont and Pitman  \cite{MR2063375}. 
\end{abstract}
\medskip

\noindent \emph{{Keywords:}}  
		Configuration model; 
		exchangeable increment processes; 
		Vervaat transform; Lamperti transform.

\medskip

\noindent \emph{{AMS subject classifications:}}   
		05C05; 
		34A36; 
		60F17;  
		60G09; 
		60G17; 
		60J80

\section{Introduction and statement of the results}\label{sectionIntroProfileOfTGDS}
Trees are an important concept in both pure and applied mathematics, 
appearing (for example) 
in biology to represent genealogies, 
in computer science as a fundamental data structure, 
as well as an important example of a combinatorial class or species 
(cf. \cite{MR3077154}, \cite{MR2251473}, \cite{MR633783},  \cite{MR2483235}, \cite{MR2484382}). 
Random trees, on the other hand, 
have been shown to be useful in analyzing the asymptotic behavior of certain families of deterministic trees. 
One of the most widely studied classes of random trees is 
that of Galton-Watson (GW) trees conditioned to have size $n$ (denoted CGW($n$)), 
some of which have been shown to be uniform in classes of trees of size $n$, 
like plane trees, binary plane trees or Cayley trees (cf. \cite{MR1630413}). 
Hence, asymptotic counting problems associated to such trees 
can be solved using the limiting continuum random tree introduced by Aldous 
(cf. \cite{MR1085326,MR1166406,MR1207226,MR2203728}).

We will analyze the class of trees with a given degree sequence because of two reasons. 
First,  for several real-world networks that have been analyzed, 
their degree sequence might have a certain feature such as having power law decay 
(see for example \cite{MR2091634,MR2857452}). 
Then, the simplest way to build an associated model of random trees 
is through the uniform distribution on trees whose degree sequence has the observed feature 
(in the random graph setting, this corresponds to the configuration model). 
Second, trees with a given degree sequence are more general than the widely studied CGW($n$) trees, 
since the latter laws can be obtained as \emph{mixtures} of the former. 
Part of the success in the study of Galton-Watson trees comes from their link to random walks; 
we wish to to argue that similar success can be had for trees with a given degree sequence thanks to their link with exchangeable increment processes. 

One way to understand the shape of rooted trees is through their profile, 
which counts the quantity of elements in the successive generations. 
(The introductions in \cite{MR2291961,MR2925941} 
summarize certain applications and references on the profile of random trees. )
A conjecture due to Aldous \cite{MR1166406} 
(for CGW($n$) having a finite variance offspring distribution)  
states that the rescaled profile converges in distribution to a multiple of the total local time process of the normalized Brownian excursion (NBE). 
Aldous's conjecture was proved in \cite{MR1608230} as a complex application of analytic combinatorics.
The latter work was generalized  in \cite{kersting2011height} 
to the case where the offspring distribution is in the domain of attraction of a stable law. 
In this paper, we state and prove a much more general version of Aldous's conjecture 
in the setting of trees with a given degree sequence.

Let us turn to the  
statements of our results. 
We define rooted plane trees following \cite{MR850756} and \cite{MR2203728}. 
Let $\mathbb{Z}_+=\{1,2,\ldots \}$ be the set of positive integers, 
and define $\mc{U}=\bigcup_{n=0}^{\infty}\mathbb{Z}_+^n$ 
as the set of all labels, 
using the convention $\mathbb{Z}^0_+=\{\varnothing\}$. 
An element of $\mc{U}$ is a sequence $u=u_1\cdots u_n$ of positive integers, 
where $|u|=n$ represents the \emph{generation or height} of $u$. 
If $u=u_1\cdots u_i$ and $v=v_1\cdots v_j$ belong to $\mc{U}$, 
write $uv=u_1\cdots u_iv_1\cdots v_j$ for the concatenation of $u$ and $v$. 
By convention $u\varnothing=\varnothing u=u$. 
For any $n\in \na$, let $[n]=\{1,\ldots, n\}$ with $[0]=\varnothing$.
\begin{definition}
	A \emph{rooted plane tree} $\tau$ is a finite subset of $\mc{U}$ such that:
	\begin{enumerate}
		\item $\varnothing\in \tau$,
		\item if $v\in \tau$ and $v=uj$ for some $j\in \mathbb{Z}_+$, then $u\in \tau$,
		\item for every $u\in \tau$, there exists a number $\chi(u)\in \na$, such that $uj\in \tau$ iff $j\in [\chi(u)]$.
	\end{enumerate}
\end{definition}
In the previous definition, the value $\chi(u)$ represents the number of children of $u$ in $\tau$.
The size of a tree $\tau$ (the number of individuals) will be denoted by $|\tau|$.
In the following, by a tree we mean a rooted plane tree. 
See Figure \ref{FigureOfTreeDFWBFW} for a graphical representation.

\begin{figure}
	\subfloat[Depth-first walk.]{	
		\includegraphics[width=.4\textwidth]{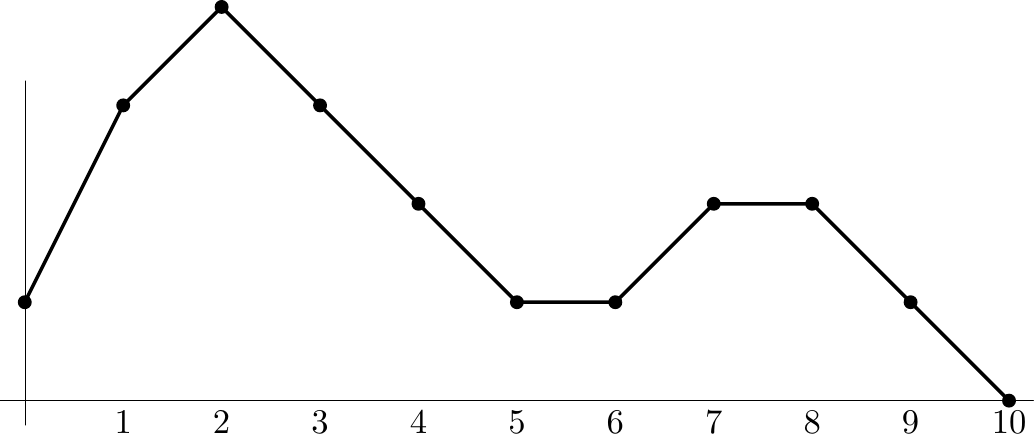}\label{figexcursionDFW}}
	\subfloat[
	Visual representation, where $u<v$ implies $u$ is to the left or below $v$
	]{	
		\includegraphics[width=.2\textwidth]{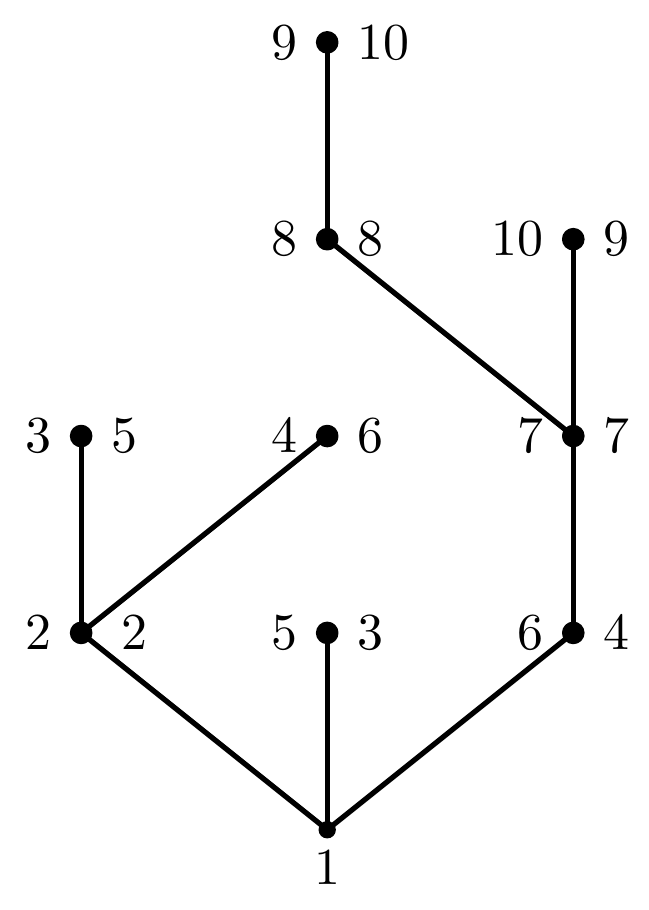}\label{figtreeDFW}
	 }
	\subfloat[Breadth-first walk and profile]{
		\includegraphics[width=.4\textwidth]{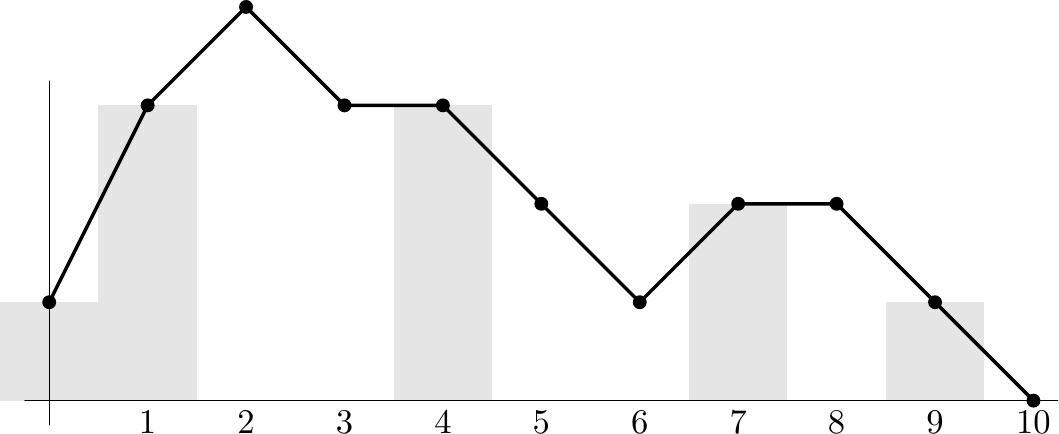}\label{figexcursionBFW}
	 }
	\caption{The tree 
	$\set{\emptyset<1<11<12<2<3<31<311<3111<312}$ 
	with profile $1,3,3,2,1$, labeled by pairs consisting of depth-first and breadth-first indices (left and right respectively). }
	\label{FigureOfTreeDFWBFW}
\end{figure}

Let us introduce the class of trees analyzed in this paper. 
\begin{definition}
A \emph{degree sequence} ${\bf s}$ is an integer sequence $(N_i,i\geq 0)$  satisfying $\sum N_i=1+\sum iN_i<\infty$. 
A random \emph{tree with a given degree sequence ${\bf s}$}  
is one whose law is uniform on the set of trees having degree sequence ${\bf s}$; 
we denote its law by $\p_{\bf s}$. 
The \emph{size} of the tree (or of the associated degree sequence) is the constant $s=\sum_i N_i$. 
\end{definition}
The integer $N_i$ represents the number of vertices with $i$ children of some rooted plane tree. 

\begin{example}[Trees with restricted degrees]
\label{Example_RestrictedDegreeTree}
Let $S\subset \set{1,2,\ldots}$ be a finite set of possible degrees. 
For every $s\in S$, choose $n_s\geq 0$ (but not all zero!). 
Then, set $N_i=n_i$ for $i\in S$ and choose $N_0=1+\sum_i(i-1)N_i$, 
so that $(N_i,i\geq 0)$ is a degree sequence. 

In particular, we can choose $S=\set{k}$ to obtain $k$-ary trees 
and if $n_k=n$ 
then the degree sequence is $N_k=n$, $N_0=1+(k-1)n$ 
(and $N_i=0$ for all $i\neq 0,k$) and its size  equals $1+nk$. 
\end{example}

\begin{example}[Galton-Watson trees]
\label{ExampleOfCGW}
Let $\mu$ be a distribution on the integers with mean in $(0,1]$; 
we think of $\mu_k$ as the probability that individuals in the population have $k$ offspring. 
A Galton-Watson tree with offspring distribution $\mu$ 
is a random rooted plane tree $\Theta$ such that, 
for any (finite) rooted plane tree $\tau$ of degree sequence ${\bf s}=(N_i)$, 
\[
	\proba{\Theta=\tau}
	=\prod_i \mu_i^{N_i}. 
\]Hence, conditionally on their degree sequence being ${\bf s}$ 
(which already conditions on their size being $s$), 
Galton-Watson trees have law $\p_{\bf s}$. 
In other words, a Galton-Watson tree (even if conditioned by its size) 
can be obtained by mixing the laws $\p_s$ 
with respect to the law of the degree distribution of $\Theta$. 
 
 A CGW$(n)$ tree has the law of $\tau$ conditioned on $|\tau|=n$. 
 In particular, if $\mu_0,\mu_k>0$ and $\mu_0+\mu_k=1$, 
 then the GW$(1+nk)$ is uniform on $k$-ary trees with $n$ inner vertices. 
 When $\mu$ has finite variance 
 or $\overline \mu(x)\sim x^{-\alpha}\imf{L}{x}$ for some $\alpha\in (1,2)$ 
 (where $L$ is a so-called slowly varying function which could be a constant), 
 \cite{MR1964956} has proved that CGW$(n)$ have scaling limits 
 which have been termed the Continuum Random Tree (CRT) of Aldous in the first case, 
 and  stable trees in the second case. 
\end{example}

Note that Example \ref{Example_RestrictedDegreeTree} is more combinatorial in nature 
than Example \ref{ExampleOfCGW}, 
even though they share the class of $k$-ary degree sequences. 
We will show more elaborate examples of degree sequences 
(having for example finite size versions of power law decay or showing applicability of our main results) in Section \ref{section_ExampleSection}. 
We would like to mention an example which is not inside our framework but that inspired it: $p$-trees. 
This family of growing random trees is introduced in \cite{MR1741774}. 
Fix $n$ and a probability measure $p=(p_i)_{1\leq i\leq n}$. 
A $p$-tree is a random tree labelled by $\set{1,\ldots, n}$, denoted $\Theta$,  
such that, for any labelled tree $\tau$ with $n$ vertices: 
\[
	\imf{\p}{\Theta=\tau}=\prod_{i=1}^n p_i^{\chi_i(\tau)},
\]where $\chi_i(\tau)$ is the number of children of the vertex with label $i$ in $\tau$ 
(cf. formula (5) in \cite{MR1741774}). 
If now $(p^n)_n=((p^n_i)_{i\leq n})_{n\geq 1}$ is a sequence of probability measures 
such that $p^n_{k+1}<p^n_k$, 
\cite{MR1741774} and \cite{MR2063375} are interested in the following asymptotic regime:
\[
	p^n_1\to 0\quad\text{and}\quad \frac{p^n_i}{s_n}\to \beta_i\quad\text{where}\quad
	s_n=\sqrt{\sum_{i}(p^n_i)^2}. 
\]In a sense, $p^n_i$ represents the size of the $i$-th individual with most children in the $n$-th tree 
(individuals with a big quantity of children are also referred to as hubs in the tree); 
then $s_n$ can be interpreted as the characteristic hub size in the $n$-th tree. 
In this case, the tree $\Theta_n$ with law $\p_{p^n}$ 
can be scaled to converge 
(in the sense of sampling, also called the Gromov-weak sense in \cite{MR3522292}) 
to the so-called inhomogeneous continuum random tree (or ICRT$(\theta)$). 
When $\beta_i=0$ for all $i$, the ICRT corresponds to Aldous' CRT. 
A natural conjecture is that the scaling limit of trees with a given degree sequence is also the ICRT$(\beta)$; 
this has been proved under some conditions (which imply $\beta_i=0$ for all $i\geq 1$) in \cite{MR3188597}. 
One open question in \cite{MR2063375} is whether the height of the ICRT is finite; 
by analogy to the case of L\'evy trees (cf. \cite{MR1954248}) 
this might be equivalent to the ICRT being compact. 
We will be able to give a partial answer to boundedness as an application of our techniques. 
We expect this to be relevant to proving convergence of trees with a given degree sequence to the ICRT. 

The main characteristic we will study in a plane tree are the profiles. 
\begin{definition}
Let $\tau$ be a plane tree. 
Define $\fun{c_\tau}{\na}{\na}$ so that 
$c_\tau(j)$ is the total number of vertices of $\tau$ up to generation $j$. 
Then, $c_\tau$ is called the \emph{cumulative profile} or \emph{cumulative population process} of the tree $\tau$. 

Let $\fun{z_\tau}{\na}{\na}$ be such that 
$z_{\tau}(j)$ is the number of vertices in $\tau$ in generation $j$. 
Then $z_\tau$ is called the  \emph{profile} of the tree $\tau$. 
We write $z$ and $c$ when there is no risk of confusion. 
\end{definition}

Our study the profile exploits its connection to breadth-first walks as follows. 
We can order the vertices of the tree according to the \emph{lexicographical order} 
(e.g., $\varnothing<1<21<22$), 
and assign label $i$ to the $i$th vertex, for $i\in [|T|] $. 
The \emph{depth-first walk} (DFW) of the tree 
will be the walk with $i$th increment $c(i)-1$, started at one.
See Figure \ref{FigureOfTreeDFWBFW} for an example. 
This ordering is also called \emph{depth-first order} and, 
together with the associated depth-first walk,  
is fundamental in understanding distances in the tree 
(cf. \cite{MR2203728,MR1954248}). 
Another useful labeling of the tree is the \emph{breadth-first order}. 
To define this, assign label 1 to the root. 
Suppose the first generation (offspring of the root) has size $z_1$. 
Order the first generation in lexicographical order, 
and assign label $i$ to the $i$th vertex, for $i\in \{2,\ldots, 1+z_1 \}$, 
continuing this way for each consecutive generation. 
The \emph{breadth-first walk} (BFW) of the tree will be 
the walk with $i$th increment $c(i)-1$, started at one; 
denote it by $x$. 
See Figure \ref{FigureOfTreeDFWBFW} for an example. 
As we now discuss, the breadth-first walk $x$ is the key to understanding the profile of a tree. 
Indeed, a simple counting argument of a proof by induction 
show us that $z_n$ can be recursively obtained by setting  
\begin{equation}
	\label{equation_DiscreteLampertiTransform}
	z_0=1,\quad
	c_k=z_0+\cdots + z_k
	\quad\text{and}\quad
	z_{k+1}=x\circ c_k
\end{equation}(cf. Chapter 9 of \cite{MR838085} and the introduction in \cite{MR3098685}). 
Also, breadth-first and depth-first walks have the same law; 
this can be seen as a manifestation 
of their link with exchangeable increment processes (cf. proof of \cite[Lemma 7]{MR3188597}).

For every $n\in\na$, let ${\bf s_n}=(N_i^n ,i\geq 0)$ be a \emph{degree sequence}. 
Consider a tree $\Theta_n$ with law $\p_{{\bf s_n}}$. 
The BFW of $\Theta_n$ will be denoted by $X^{n}$, its cumulative profile by $C^n$ and its profile by $Z^n$ and recall that $Z^{n}(k+1)=X^n(C^n(k))$. 
By analogy with the continuous-time case (cf. \cite{MR0208685}), 
$Z^{n}$ is called the \emph{discrete Lamperti transform} of $W^{n}$. 
Figure \ref{FigureOfTreeDFWBFW} shows an example of a BFW and its Lamperti transform. 
For certain classes of trees 
(such as finite variance CGW$(n)$ trees), 
the breadth-first walks $X^n$ have a scaling limit 
(in the sense that $X^n(n\cdot)/\sqrt{n}$ converges weakly to a stochastic process $X$, 
which is the so-called normalized Brownian excursion for finite variance CGW$(n)$ trees.)
To study the scaling limit of the profile, 
the general idea is 
to prove, for some sequence $b_n\to\infty$,  
that the scaling ($Z^n(s_n \cdot/b_n)/b_n$) of the profile has a limit 
which is a particular solution to
\begin{equation}
	\label{eqnLampertiTransformIntro}
	Z(t)=X\circ C(t) \quad\text{with}\quad C(t)=\int_0^tZ(s)\,ds,
\end{equation}in analogy to \eqref{equation_DiscreteLampertiTransform}, 
where $X$ is the limit of the rescaled BFWs. 
Before our main result, 
we will consider scaling limits of BFWs and then a characterization of solutions to \eqref{eqnLampertiTransformIntro}. 
BFWs have a simple probabilistic structure: 
they correspond to the Vervaat transform of exchangeable increment processes 
as shown in \cite{MR3188597} and recalled in Section \ref{subsection_SubsequentialScalingLimits}. 

In our case, the scaling limit $X$ of the breadth-first walks of the trees $\Theta_n$ is related 
to a continuous time process $X^b$ with exchangeable increments 
(abridged EI process) 
of the form
\begin{equation}
\label{eqnEIP}
X^b(t)=\sigma b(t)+\sum_{j=1}^\infty \beta_j\paren{{\bf 1}\paren{U_j\leq t}-t}\ \ \ \ \ \ t\in[0,1],
\end{equation}where $b$ is a Brownian bridge on $[0,1]$, 
$(U_j,j\geq 1)$ are independent of $b$ and i.i.d. with uniform law on $[0,1]$, 
and with constants $\sigma \in \re^+$, $\beta_1 \geq \beta_2\geq \cdots \geq 0$ with $\sum\beta_j^2<\infty$. 
(From Kallenberg's representation \cite{MR0394842}, 
the process $X^b$ has canonical parameters $(0,\sigma, \beta)$.) 
Recall that $X^b$ can be considered as a random element of \emph{Skorohod space}, that is, of the space of functions on $[0,1]$ which are right-continuous and have left limits (abridged \cadlag). 
The ``excursion-type" process $X$ serving as a scaling limit of the breadth first walks $X_{n}$, 
is obtained from $X^b$ using the  \emph{Vervaat } transformation, 
which exchanges the pre and post minimum parts of $X^b$. 
It is formally defined as follows and visualized in Figure \ref{figtransfVervaat}.

\begin{definition}[Vervaat transformation]
Let $X=(X_t,t\in [0,1])$ be a stochastic process with \cadlag\ paths 
which reaches its infimum value uniquely and continuously at $\rho$. 
Assume that $X_0=X_1=0$. 
The \defin{Vervaat transform} of $X$ is the \cadlag\ stochastic process $V=(V_t,t\in [0,1])$ 
defined by $V_t=X_{\{t+\rho\}}-X_t$, where $\{t\}$ is the fractional part of $t$. 
\end{definition}
\begin{figure}
		\includegraphics[width=.475\textwidth]{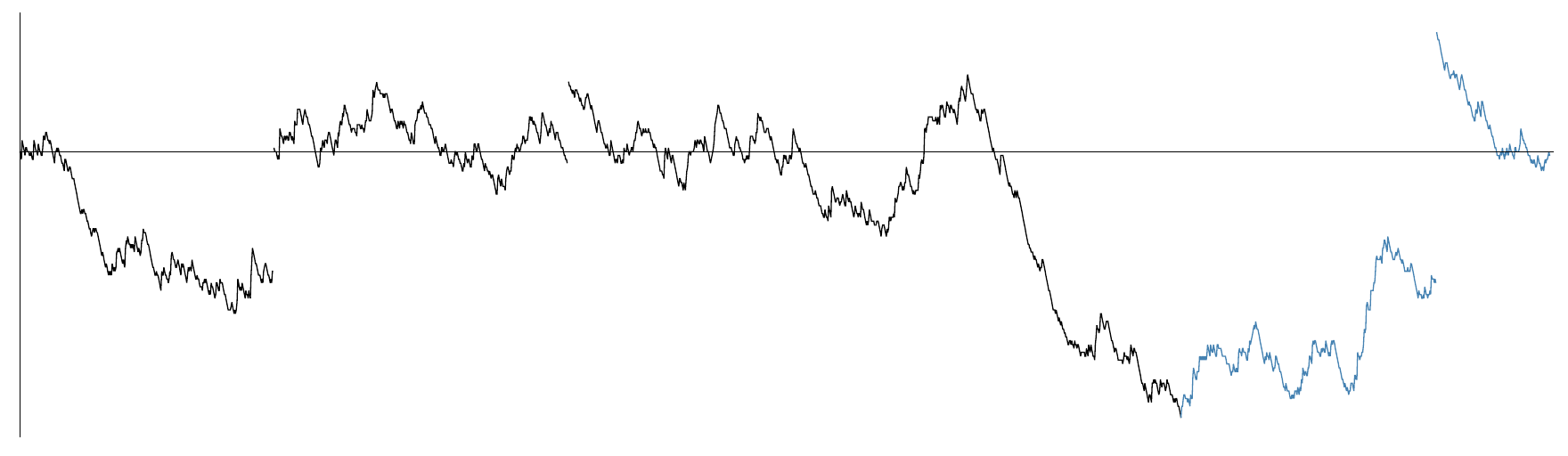}
		\hfill
		\includegraphics[width=.475\textwidth]{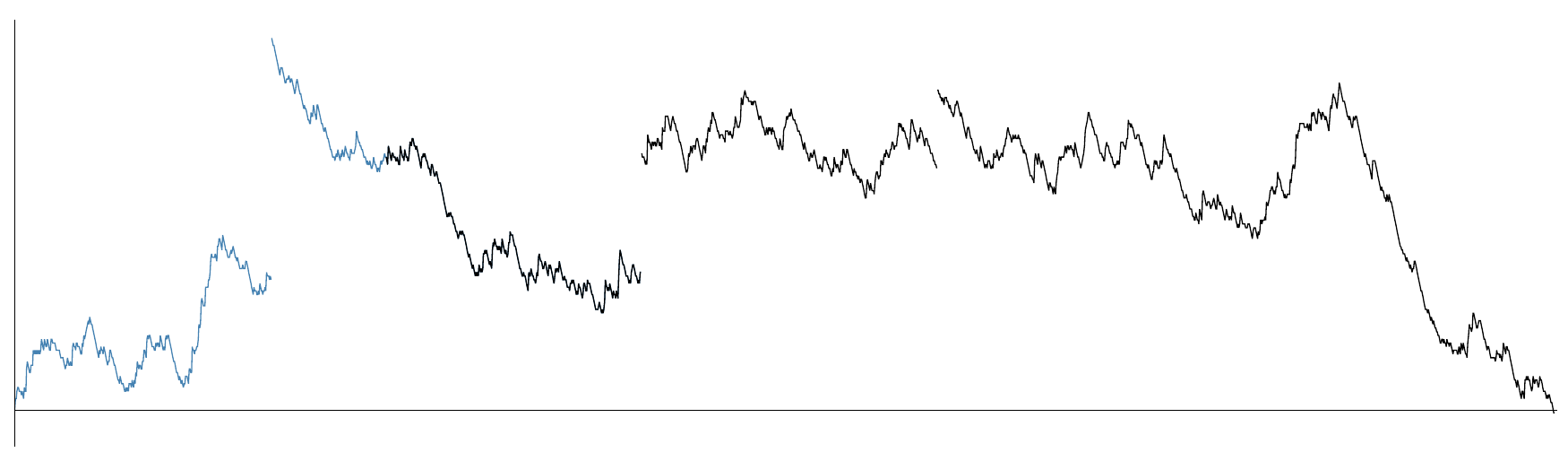}
		\caption{\footnotesize Discontinuous EI process (left) and its Vervaat transform (right). 
		The EI process attains its infimum continuously at a unique time 
		and its Vervaat transform is positive.}
		\label{figtransfVervaat}
\end{figure}

For a given degree sequence ${\bf s_n}$, 
let $(d(j),j\in [s_n])$ be the associated \emph{child sequence}, 
obtained by writing $N^n_0$ zeros, $N^n_1$ ones, etc.
, and ordering the resulting sequence decreasingly. 
The ordering shifts focus into the hubs (individuals with the most offspring) as in the case of $p$-trees. 
Note that $N^n_j=|\{i:d(i)=j \}|$. 
Recall that for \cadlag\ functions $\fun{f_n,f}{[0,1]}{\infty}$, $f_n\to f$ 
if there exists a sequence of increasing homemorphisms $\fun{\lambda_n}{[0,1]}{[0,1]}$ 
such that $f_n\circ\lambda_n\to f$ and $\lambda_n\to \id$ uniformly. 
This notion of convergence comes from several metrics 
and defines the Skorohod topology (cf. \cite{MR1700749} and \cite{MR838085}). 
Recall that convergence to $f$ in the Skorohod topology 
coincides with uniform convergence on compact sets 
whenever $f$ is continuous. 
We will use the product topology whenever the convergence of more than one \cadlag\ function is required. 
Also, if $X^n,X$ are \cadlag\ processes, the weak convergence
\[ 
	X^n\stackrel{d}{\to}X\text{ means that }\esp{\imf{F}{X^n}}\to \esp{\imf{F}{X}}
\]for any $F$ which is bounded and continuous on \cadlag\ functions. 

\begin{proposition}
\label{proposition_ConvergenceOfBFWs}
Let ${\bf s_n}$  be a degree sequence for every $n\in\na$ of size $s_n$, where ${\bf s_n}=(N^n_i,i\geq 0)$. 
Denote by $d^n$ its ordered child sequence. 
Let $\tilde X^n$ be the breadth-first walk of a uniform tree with degree sequence $\mathbf{s_n}$, extended by constancy on each interval $[i,i+1)$ for $i\in\na$. 
Assume that\begin{description}
\item[Size] $s_n\to\infty$, 
\item[Hubs] there exists $b_n\to\infty$ such that, for every $i\geq 1$,  $(d^n_i/b_n)$ is convergent to a limit $\beta_i\geq 0$, 
\item[Degree variance] there exists $\sigma\in [0,\infty)$ such that $\frac{1}{b_n^2}\sum_i (i-1)^2 N_i\to \sigma^2+\sum_i\beta_i^2$, and 
\item[Unbounded variation] either $\sigma^2>0$ or $\sum_i \beta_i=\infty$. 
\end{description}
Define the scaled breadth-first walk $X^n$ as $\tilde X^n(s_n\cdot)/b_n$. 
Then, the EI process $X^b$ given by \eqref{eqnEIP} achieves its minimum uniquely and continuously and 
the sequence $(X^n)$ converges weakly to the Vervaat transform $V(X^b)$. The process $V(X^b)$ is strictly positive on $(0,1)$. 
\end{proposition}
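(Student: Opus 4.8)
The plan is to decompose the statement into three separate claims and attack them in sequence. First, the weak convergence of the scaled breadth-first walks to an EI process \emph{without} any conditioning; second, the identification of $X^b$ as having canonical parameters $(0,\sigma,\beta)$ under the Size, Hubs, and Degree variance hypotheses; third, the passage to the conditioned (excursion-type) process via the Vervaat transform, which requires the Unbounded variation hypothesis to guarantee that $X^b$ attains its minimum uniquely and continuously.

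First I would recall the combinatorial description: by the cycle-lemma-type argument of \cite{MR3188597}, the breadth-first walk of a uniform tree with degree sequence ${\bf s_n}$ is the Vervaat transform of the bridge-type path $\tilde S^n$ obtained by placing the increments $(d^n_j-1)_{j\in[s_n]}$ in a uniformly random order. This $\tilde S^n$ is an exchangeable-increment process on $\{0,1,\ldots,s_n\}$ with a deterministic set of jump sizes. The scaled path $\tilde S^n(s_n\cdot)/b_n$ is then a discrete EI bridge, and the natural tool is Kallenberg's criterion for convergence of EI processes (cf. \cite{MR0394842}): to obtain convergence to an EI process with parameters $(0,\sigma,\beta)$ one checks that the largest jumps $d^n_i/b_n$ converge to $\beta_i$ (this is the Hubs hypothesis), that the rescaled sum of squares $\frac{1}{b_n^2}\sum_j (d^n_j-1)^2 = \frac{1}{b_n^2}\sum_i (i-1)^2 N^n_i$ converges to $\sigma^2+\sum_i\beta_i^2$ (the Degree variance hypothesis), and that no mass escapes to a Gaussian-plus-infinitely-many-small-jumps regime other than the one recorded by $\sigma$ --- the point being that $\frac{1}{b_n^2}\sum_j(d^n_j-1)^2 - \sum_{i\le K}(d^n_i/b_n)^2 \to \sigma^2 + \sum_{i>K}\beta_i^2$, which tends to $\sigma^2$ as $K\to\infty$, so the "residual" variance is purely Gaussian. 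The centering is automatic since the mean increment is $\frac{1}{s_n}\sum_j(d^n_j-1) = -\frac{1}{s_n}\to 0$ after scaling by $s_n/b_n$, using $1+\sum_i i N^n_i = \sum_i N^n_i$. This yields $\tilde S^n(s_n\cdot)/b_n \stackrel{d}{\to} X^b$ in Skorohod space, where $X^b$ is as in \eqref{eqnEIP}.

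Next comes the heart of the matter: showing $X^b$ attains its infimum at a unique time, continuously (i.e.\ with no jump at the argmin), which is exactly what the Vervaat transform requires and what the Unbounded variation hypothesis is for. If $\sigma^2>0$ the Brownian component makes the infimum a.s.\ unique and continuously attained by standard arguments. If $\sigma=0$ but $\sum_i\beta_i=\infty$, then $X^b$ has paths of infinite variation (the jump part alone has infinite variation since $\sum\beta_j=\infty$ while the compensator is finite-variation --- here one must be a little careful, but the total-variation blow-up of $\sum_j\beta_j(\mathbf{1}(U_j\le t)-t)$ forces the path not to be monotone near any point), and one invokes the fluctuation theory of EI processes (the analogue of "$0$ is regular for $(-\infty,0)$ and for $(0,\infty)$") to conclude the infimum is a.s.\ attained at a unique time and that $X^b$ is a.s.\ continuous at that time. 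This is where I expect the main obstacle to lie: ruling out the case $\sum\beta_j<\infty$ with $\sigma=0$ (finite variation, where the Vervaat transform can fail to be positive / the minimum can be attained at a jump), and more delicately, proving uniqueness and continuity at the argmin for genuinely discontinuous EI processes --- this seems to be precisely the "novel path transformations and fluctuation theory of exchangeable increment processes" the abstract advertises, and I would organize it as a lemma to be proved in a later section.

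Finally, with $X^b$ attaining its infimum uniquely and continuously, the Vervaat transform $V$ is a.s.\ continuous at $X^b$ as a map on Skorohod space (continuity points of $V$ are exactly the paths with unique, continuously-attained infimum), so the continuous-mapping theorem upgrades $\tilde S^n(s_n\cdot)/b_n \stackrel{d}{\to} X^b$ to $X^n = V(\tilde S^n(s_n\cdot)/b_n) \stackrel{d}{\to} V(X^b)$; here one also checks that the discrete Vervaat transform of $\tilde S^n$ agrees with the breadth-first walk up to the negligible constancy-extension and a shift by $1$, which is the content of \cite[Lemma 7]{MR3188597}. Strict positivity of $V(X^b)$ on $(0,1)$ follows from the same fluctuation-theory input: post-minimum, an EI process started from its infimum stays strictly above it on an initial interval, and symmetrically the pre-minimum part stays strictly above the final value, and splicing these gives strict positivity on the open interval --- the endpoints are $0$ by construction. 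I would isolate this last point as a short corollary of the uniqueness/continuity lemma.
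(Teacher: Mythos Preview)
Your proposal is correct and follows essentially the same approach as the paper: reduce to the bridge via the cycle lemma, apply Kallenberg's convergence criterion for EI processes (checking the sum, sum of squares, and ranked jumps) to get $W^b_n \to X^b$, then invoke continuity of the Vervaat transform at paths with unique, continuously-attained minimum. The only minor discrepancy is that you anticipate proving the uniqueness/continuity of the minimum as new in-paper work, whereas the paper simply cites it from \cite{MR1825153} and \cite{2019arXiv190304745A}; the ``novel path transformations'' advertised in the abstract refer to the 213 transformation used later for asymptotic thickness, not to this proposition.
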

In the above proposition, the hypotheses \defin{size, hubs} and \defin{degree variance} are exactly those that are needed to apply the characterization and convergence results for EI processes of \cite{MR0394842} and \cite[Theorem 3.13]{MR2161313} and obtain convergence of the bridge-like processes. To obtain the convergence of the breadth-first walks, we need results concerning the continuity properties of the Vervaat transformation, like the identification of conditions to ensure that EI processes reach their minimum uniquely and continuously, obtained recently in \cite{2019arXiv190304745A}. 
This is where hypothesis \defin{unbounded variation} is relevant. 

Regarding solutions to \eqref{eqnLampertiTransformIntro}, we now characterize them in terms of a very special one, the Lamperti transform of $X$. Randomness is not needed for the result. 
\begin{definition}[Lamperti transformation]
For a given function $\fun{f}{[0,1]}{\re_+}$, 
which is right-continuous and admits left limits 
(abridged \cadlag), 
let 
\[
	i(t)=\int_0^t \frac{1}{\imf{f}{s}}\, ds
\]and define the right-continuous inverse of $i$, denoted $c^0$, by
\[
	c^0(t)=\inf\set{s\geq 0: i(s)>t},
\]where, by convention $\inf\emptyset=1$. 
The Lamperti transform of $f$ is $h^0=f\circ c^0$. 
We call the pair $(h^0,c^0)$ the Lamperti pair associated to $f$.
\end{definition}

In the next result, $D_+h$ denotes the right-hand derivative of the function $h$. 
\begin{proposition}
\label{proposition_deterministicIVP}
Let $\fun{f}{\re_+}{\re_+}$ be a \cadlag\ function with non-negative jumps $\Delta f(t)=f(t)-f(t-)\geq 0$. 
Assume that $f=0$ on $\set{0}\cup[1,\infty)$ 
 and $f>0$ on $(0,1)$. 
Let $(h^0,c^0)$ be the Lamperti pair associated to $f$. 
Then, solutions to the equation
\begin{equation}
\label{deterministicIVP}
c_0=0,\quad
D_+ c=f\circ c
\end{equation}can be characterized as follows:
\begin{enumerate}
\item If $\int_{0+}\frac{1}{\imf{f}{s}}\, ds=\infty$ then $h,c=0$ and $0$ is the unique solution to \eqref{deterministicIVP}. 
\item If $\int_{0+}\frac{1}{\imf{f}{s}}\, ds<\infty$, then $c^0$ is not identically zero and $D_+c^0=h^0$, 
so that $c^0$ solves \eqref{deterministicIVP}. 
Furthermore, solutions to \eqref{deterministicIVP} conform a one-parameter family $(c^\lambda)_{\lambda\in [0,\infty]}$, 
\[
	c^\lambda(t)=c^{0}( \bra{\lambda-t}^+  )
	\text{ for $\lambda>0$ and}\quad
	c^\infty=0.
\]Finally, assume that $\int_{0+}\frac{1}{\imf{f}{s}}\, ds<\infty$.  \begin{enumerate}
\item If $\int^{1-}\frac{1}{\imf{f}{s}}\, ds=\infty$ then $c^0$ is strictly increasing on $\re_+$, with $c^0(\infty)=1$.
\item If $\int^{1-}\frac{1}{\imf{f}{s}}\, ds<\infty$ then $c^0$ is strictly increasing until it reaches the value $1$ at the finite time $\int_0^1 \frac{1}{\imf{f}{s}}\, ds$. 
\end{enumerate}
\end{enumerate}
\end{proposition}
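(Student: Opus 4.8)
The whole statement will follow from a handful of facts about the strictly increasing function $i$ together with one one-sided change-of-variables lemma, once the basic regularity of solutions is in place. First I would note that $f$, being \cadlag\ on the compact $[0,1]$ and zero elsewhere, is bounded, $0\le f\le M$; hence any solution $c$ of \eqref{deterministicIVP} has $0\le D_+c=f\circ c\le M$, so $c$ is $M$-Lipschitz (a function with bounded right derivative is Lipschitz), continuous, non-decreasing, and satisfies $c(t)=\int_0^tf(c(s))\,ds$; moreover, once $c$ reaches level $1$ one has $f\circ c\equiv0$, so $c$ stays at $1$ and thus $0\le c\le1$. On the other side, right-continuity and positivity of $f$ on $(0,1)$ give $D_+i(t)=1/f(t)$ for $t\in(0,1)$, and $f(0)=0$ gives $D_+i(0)=+\infty$; also $i$ is continuous and strictly increasing on the interval where it is finite, so there it has a continuous strictly increasing inverse, which is exactly $c^0$. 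The technical heart is a \emph{product rule}: if $g$ is right-differentiable at $u_0\in(0,1)$ and $c$ is continuous, non-decreasing, with $c(t_0)=u_0$ and $D_+c(t_0)>0$, then $D_+(g\circ c)(t_0)=D_+g(u_0)\,D_+c(t_0)$; one proves this by writing the difference quotient of $g\circ c$ as $\bigl[g(c(t_0+h))-g(u_0)\bigr]/\bigl[c(t_0+h)-u_0\bigr]\cdot\bigl[c(t_0+h)-u_0\bigr]/h$ and noting that $h\mapsto c(t_0+h)-u_0$ is continuous, strictly positive for $h>0$ and vanishes at $0$, hence sweeps out a whole interval $(0,\delta)$, so the first factor tends to $D_+g(u_0)$. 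Applied to $g=i$ (where $D_+i=1/f<\infty$ on $(0,1)$) and to a solution $c$ on the open set $\{0<c<1\}$ — where $D_+c=f(c)>0$ — this gives $D_+(i\circ c)=\tfrac1{f(c)}f(c)=1$, so $i\circ c-\mathrm{id}$ is locally constant on $\{0<c<1\}$. This is the lemma I will use throughout.

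If $\int_{0+}1/f=\infty$, I would argue there is no solution other than $0$. Suppose $c\not\equiv0$ solves \eqref{deterministicIVP} and let $\tau=\inf\{t:c(t)>0\}$; then $\tau<\infty$, $c\equiv0$ on $[0,\tau]$, and $c$ is strictly increasing with $0<c<1$ on a nonempty interval $(\tau,t_1)$. Given $\varepsilon\in(0,c(t_1))$, pick $t_\varepsilon\in(\tau,t_1)$ with $c(t_\varepsilon)=\varepsilon$ and apply the lemma to $i_\varepsilon\circ c$ with $i_\varepsilon=\int_\varepsilon^{\cdot}1/f$ (finite on $[\varepsilon,c(t_1)]$); pinning the constant by $i_\varepsilon(\varepsilon)=0$ yields $\int_\varepsilon^{c(t_1)}1/f=t_1-t_\varepsilon\le t_1-\tau$, and letting $\varepsilon\downarrow0$ forces $\int_{0+}^{c(t_1)}1/f<\infty$, a contradiction. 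Hence $0$ is the unique solution.

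If $\int_{0+}1/f<\infty$, then $i$ is finite and strictly increasing near $0$ with $i(0+)=0$, so $c^0=i^{-1}$ is not identically $0$; applying the product rule to the identity $i(c^0(t))=t$ (one checks $D_+c^0>0$ wherever $0<c^0<1$, since there $1/f$ is bounded to the right of $c^0(t)$, making $c^0$ bi-Lipschitz there) gives $D_+c^0=f\circ c^0=h^0$ on $\{0<c^0<1\}$, and both sides vanish when $c^0\in\{0,1\}$, so $c^0$ solves \eqref{deterministicIVP}. Each $c^\lambda(t):=c^0([t-\lambda]^+)$ also solves it — stuck at $0$ with $D_+c^\lambda=f(0)=0$ before time $\lambda$, a pure time-shift of $c^0$ afterwards — and so does $c^\infty\equiv0$. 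Conversely, let $c$ be any solution: if $c\equiv0$ it is $c^\infty$; otherwise set $\lambda=\inf\{t:c(t)>0\}<\infty$, so $c\equiv0$ on $[0,\lambda]$ and $c$ is strictly increasing with $0<c<1$ on some $(\lambda,\beta)$, $\beta\le\infty$. By the lemma, $i\circ c-\mathrm{id}$ is constant on $(\lambda,\beta)$; letting $t\downarrow\lambda$ (so $c(t)\downarrow0$ and $i(0+)=0$) identifies the constant as $-\lambda$, whence $c(t)=i^{-1}(t-\lambda)=c^0(t-\lambda)$ on $(\lambda,\beta)$. If $\beta<\infty$ then $c(\beta)=1$, and both $c$ and $c^0(\cdot-\lambda)$ are constant $=1$ afterwards; either way $c=c^\lambda$. (The same change of variables also shows a solution can never climb past a level at which $i$ is already infinite; I will assume, as holds for the Vervaat transforms of interest, that $1/f$ is locally integrable on $(0,1)$, so that no such level occurs below $1$.)

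Finally, for the dichotomy (a)/(b): since $c^0=i^{-1}$ on $[0,i(1))$ and thereafter stays at the value already reached, if $\int^{1-}1/f=\infty$ then $i(1-)=+\infty$, so $c^0$ is strictly increasing on all of $\re_+$ with $c^0(\infty)=1$; if $\int^{1-}1/f<\infty$ then $i(1)=\int_0^11/f<\infty$, $c^0$ is strictly increasing up to the finite time $\int_0^1 1/f$ where it attains $1$, and stays there since $f(1)=0$. I expect the only genuinely delicate point to be the product-rule/change-of-variables step at the level of a merely \cadlag\ $f$: everything rests on being able to right-differentiate $i\circ c$ using only the right-differentiability of $i$, which is exactly why the continuity and strict monotonicity of $c$ on $\{0<c<1\}$ matter — by contrast, verifying that the $c^\lambda$ solve \eqref{deterministicIVP} and assembling them into the stated one-parameter family is routine bookkeeping.
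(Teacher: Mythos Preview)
Your argument is correct and follows the same underlying idea as the paper --- once a solution $c$ is strictly increasing on $\{0<c<1\}$, the chain rule pins it down as a time-shift of $i^{-1}$ --- but the packaging differs in a way worth noting. You compose with $i$ and show $D_+(i\circ c)=1$; the paper instead takes the inverse $\tilde\iota=c^{-1}$ and shows $D_+\tilde\iota=1/f$ via the derivative-of-inverse formula, then integrates. These are dual computations. The paper's route has one practical advantage that speaks directly to the caveat you flagged: since $\tilde\iota$ is finite by construction (it is the inverse of a continuous function with range in $[0,1]$), the identity $\tilde\iota(t)-\tilde\iota(s)=\int_s^t 1/f$ \emph{derives} local integrability of $1/f$ on $(0,1)$ from the existence of a nontrivial solution, rather than assuming it. Your $i\circ c$ approach needs that extra care because $i$ could a priori blow up in the interior; working with the inverse avoids this entirely.

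One further point: the paper actually proves the characterization for the broader class of functions satisfying the two-sided inequality
\[
\int_s^t f_-\circ c\,(r)\,dr\ \le\ c(t)-c(s)\ \le\ \int_s^t f\circ c\,(r)\,dr,
\]
not just for solutions of $D_+c=f\circ c$. This is what is actually used later (in Proposition~\ref{propoConvergenciaSubsucPerfil}) when identifying subsequential limits of the discrete profiles, since those limits are only known a priori to satisfy the inequality. The extra step is small --- strict monotonicity of $c$ on $\{0<c<1\}$ forces $f_-\circ c=f\circ c$ except at countably many points, collapsing the inequality to equality --- but it is worth building into your proof if you want it to plug into the rest of the paper.
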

Note that the Lamperti transformation is strictly increasing, plateauing if it reaches $1$. 
Hence, under the conditions of Proposition \ref{proposition_deterministicIVP}, 
solutions $c$ to \eqref{deterministicIVP} 
cannot have constancy intervals except when they take the values zero or one. 
This is a key fact which will allow us to prove our main theorem. 

In \cite{MR2063375}, it is proved that boundedness of the ICRT is equivalent to $\int_0^1 1/X_s\, ds<\infty$. 
Having seen that this is related to (non-triviality and finite time absorption of) 
the Lamperti transformation of the scaling limits of breadth-first walks, 
we offer the following sufficient conditions which are simple to check. 
For a sequence $\beta_i\downarrow 0$, 
define $\overline \beta(x)=\#\{i: \beta_i>x\}$. 
\begin{proposition}
\label{proposition_CompactnessOfICRT}
Let $X$ be the Vervaat transform of the EI process $X^b$ given in \eqref{eqnEIP}. 
If $\sigma^2>0$ then $\int_0^1 1/X_s\, ds<\infty$. 
Otherwise, if $\sigma^2=0$  and 
\begin{enumerate}
\item \label{lowerBGAssumption}
$\lim_{x\to 0}x^\alpha \overline \beta(x)\to \infty$ for some $\alpha\in (1,2)$ then $\int^{1-} 1/X_s\, ds<\infty$. 
\item If furthermore  there exists $\tilde \alpha<1/(2-\alpha)$ such that $\sum_i \beta_i^{\tilde \alpha}<\infty$ then $\int_{0}^1 1/X_s\, ds<\infty$. 
\end{enumerate}
\end{proposition}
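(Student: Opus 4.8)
The plan is to reduce the statement to the behaviour of the EI process $X^b$ near its minimum, and then to use fluctuation estimates for EI processes conditioned to stay positive. In each case under consideration $X^b$ has unbounded variation (a Brownian component if $\sigma^2>0$, and $\sum_i\beta_i=\infty$ when $\sigma^2=0$ and $x^\alpha\overline\beta(x)\to\infty$), so its infimum $m=\inf_{[0,1]}X^b$ is attained at a unique time $\rho$, continuously, and $X_s=X^b_{\{s+\rho\}}-m$. Since $s\mapsto\{s+\rho\}$ is a piecewise translation of $[0,1)$ it preserves Lebesgue measure, so substituting $u=\{s+\rho\}$,
\[
\int_0^1\frac{ds}{X_s}=\int_0^1\frac{du}{X^b_u-m}.
\]
On $(0,1)$ the integrand on the right is locally bounded away from $u=\rho$: one has $m<0$ a.s.\ (a nondegenerate EI bridge of unbounded variation goes strictly below $0$), whence $X^b_u-m\ge -m>0$ near $u=0$ and near $u=1$, while $X^b$ is continuous at $\rho$ with $X^b_\rho=m$. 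Writing $Y_t=X^b_{\rho+t}-m$ for the post-minimum process and $\hat Y_t=X^b_{(\rho-t)-}-m$ for the time-reversed pre-minimum process (both nonnegative, vanishing only at $t=0$), the required conclusions become: $\int_{0+}dt/Y_t<\infty$ and $\int_{0+}dt/\hat Y_t<\infty$ (for $\sigma^2>0$); $\int_{0+}dt/\hat Y_t<\infty$ (item 1); and both (item 2). Note $Y$ carries only the upward jumps of $X^b$ and $\hat Y$ only downward jumps, and that a decomposition of the EI bridge at its infimum (in the spirit of \cite{2019arXiv190304745A,MR3188597}) makes $Y$ and $\hat Y$, near $t=0$, mutually absolutely continuous with the corresponding EI processes conditioned to stay positive, with a density that is controlled near $0$.

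Suppose first $\sigma^2>0$. A first-moment bound suffices. Because the Brownian bridge $\sigma b$ is nondegenerate and the pure-jump part has scaling exponent $>1/2$, the small-$t$ behaviour of $Y$ and of $\hat Y$ is dominated below by $\sigma$ times a three-dimensional Bessel process, so $\mathbf E[1/Y_t]=O(t^{-1/2})$ and $\mathbf E[1/\hat Y_t]=O(t^{-1/2})$ as $t\downarrow 0$ (the relevant density vanishes quadratically at $0$ on scale $\sqrt t$). Equivalently $\mathbf E[1/X_s]=O(s^{-1/2})$ near $s=0$, $=O((1-s)^{-1/2})$ near $s=1$, and is finite and locally bounded on $(0,1)$, so Tonelli gives $\mathbf E\bigl[\int_0^1 ds/X_s\bigr]<\infty$ and hence $\int_0^1 ds/X_s<\infty$ a.s.

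Now let $\sigma^2=0$ and $x^\alpha\overline\beta(x)\to\infty$ for some $\alpha\in(1,2)$. The reversed pre-minimum process $\hat Y$ is an infinite-variation pure-jump EI process whose jumps are all downward and whose jump intensity dominates that of a spectrally one-sided $\alpha$-stable process; such a process creeps upward, and comparing Lévy/Laplace exponents yields, for every $\alpha'\in(\alpha,2)$, a bound $\hat Y_t\ge c\,t^{1/\alpha'}$ valid for all small $t$, almost surely — for instance through a Borel–Cantelli estimate over dyadic scales $2^{-k}$ controlling $\mathrm{Leb}\{t\le 2^{-k}:\hat Y_t\le 2^{-k/\alpha'}\}$, the upward creeping preventing the sojourns below a level from clustering. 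Since $1/\alpha'<1$ this gives $\int_{0+}dt/\hat Y_t<\infty$, i.e.\ $\int^{1-}ds/X_s<\infty$.

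Finally assume in addition $\sum_i\beta_i^{\tilde\alpha}<\infty$ for some $\tilde\alpha<1/(2-\alpha)$; it remains to prove $\int_{0+}dt/Y_t<\infty$ for the post-minimum process $Y$, whose jumps are all upward. This is the delicate step: $Y$ may jump up and then creep back down close to $0$ before the next macroscopic jump, so $\{t\le\delta:Y_t\le a\}$ is a union of many intervals and the first-moment bound genuinely fails (already for the spectrally positive $\alpha$-stable excursion one has $\mathbf E[\int_0^1 ds/X_s]=\infty$ while the integral is a.s.\ finite). I would argue by a multiscale covering/second-moment estimate: at scale $a=2^{-k}$ bound the number and total length of the excursions of $Y$ below $a$ occurring before time $a^{\alpha'}$, using that after a downcrossing from $2a$ to $a$ the sojourn of $Y$ below $a$ is governed by the truncated second moment $\sum_{\beta_i\le a}\beta_i^2$, and estimating separately the effect of the finitely many jumps of size $>a$ just after the minimum; the hypothesis $\sum_i\beta_i^{\tilde\alpha}<\infty$ with $\tilde\alpha<1/(2-\alpha)$ is precisely what makes the resulting sum over $k$ converge, so that Borel–Cantelli yields $Y_t\ge c\,t^{\theta}$ for some $\theta<1$ and all small $t$. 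Then $\int_{0+}dt/Y_t<\infty$, and with the previous paragraph $\int_0^1 ds/X_s<\infty$. The hardest point — the one I expect to require most work — is exactly this entrance-law estimate for a one-sided upward-jumping EI process: lacking an integrable first moment, one must rule out long sojourns near $0$ by a careful dyadic summation whose convergence is the role of the extra summability hypothesis.
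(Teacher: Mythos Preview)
Your reduction to the pre- and post-minimum processes $\hat Y$ and $Y$ is correct and matches the paper. However, from that point on the proposal diverges from the paper's argument and contains a genuine gap.

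\textbf{The gap.} Your treatment of the post-minimum process $Y$ (both when $\sigma^2>0$ and in item 2) is not a proof. For $\sigma^2>0$ you assert that $Y$ is dominated below by a Bessel(3) process, but the minimum location $\rho$ depends on the full process including the jump part, so there is no obvious pathwise comparison; the first-moment bound $\mathbf E[1/Y_t]=O(t^{-1/2})$ is not justified. For item 2 you explicitly acknowledge the difficulty and offer only a heuristic multiscale/second-moment scheme with no estimates; you have not shown that the hypothesis $\sum_i\beta_i^{\tilde\alpha}<\infty$ with $\tilde\alpha<1/(2-\alpha)$ actually makes any dyadic sum converge. This is the heart of the proposition and it is left open.

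\textbf{What the paper does instead.} The paper avoids the entrance-law problem for $Y$ entirely by a path transformation (its Proposition~\ref{proposition_LemmeFromBertoin}, extending Bertoin's Lemme~4 to EI processes): the future infimum of the post-minimum process has the same law as $\overline X\circ d$, hence is bounded below by $\overline X$ itself. This reduces the post-minimum question to the growth of the running supremum $\overline X_t$ near $t=0$. One then couples the EI process to a L\'evy process $Y^L$ with the same jump atoms (Kallenberg's splitting, so that the remainder $Z$ satisfies $|Z_t|/t^{1/\gamma}\to 0$ for every $\gamma>1$) and analyses $\overline{Y^L}_t$ via fluctuation theory: writing $\overline{Y^L}=H^-\circ L$ with $(\tau,H)$ the ascending ladder process, one computes the Blumenthal--Getoor indices and checks that the assumption $\tilde\alpha<1/(2-\alpha)$ is exactly the condition $\overline\alpha(\tau)<\underline\alpha(H)$, which yields $\overline{Y^L}_t/t^{1/\gamma'}\to\infty$ for some $\gamma'>1$. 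The case $\sigma^2>0$ falls out of the same machinery since then $\overline X_t\ge \tfrac{\sigma^2}{2}L_t$ and $\overline\alpha(\tau)\le 1/2$. The pre-minimum side (your item 1) is handled by time-reversal and the analogous analysis of $-\underline X_t$ via the descending ladder time, whose Laplace exponent is $\Phi=\Psi^{-1}$.

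In short: the missing idea is the identity $\underline{\underline X}_{\rho+\cdot}-\underline{\underline X}_\rho\stackrel{d}{=}\overline X\circ d$, which converts an intractable post-minimum estimate into a tractable ladder-process computation near the origin.
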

In the above proposition, recall that  $\sum_i \beta_i^2<\infty$. 
Hence,  $\int_0^1 1/X_s\,ds<\infty$ if condition \ref{lowerBGAssumption} above holds for some $\alpha>3/2$. 
We also obtain the integrability of $1/X$ when $\beta$ has power law type decay, or more broadly, when 
\[
	1<\sup\set{\alpha: \lim_{x\to 0}x^\alpha \overline\beta(x) =\infty}	  
	=\inf\set{\alpha: \lim_{x\to 0} x^\alpha\overline\beta(x)=0}. 
\]Based on the case of scaling limits of Galton-Watson trees, 
\cite{MR2063375} conjectured a necessary and sufficient condition in terms of $\sigma$ and $\beta$ for finitude of the integral of $1/X$, 
which is required for compactness of the ICRT. 
Our condition is only sufficient, 
but it is acknowledged in \cite{MR3748328} that ``checking compactness...turns out to be quite intractable". 
In that article, one obtains 
annealed results where $\beta$ is random and satisfies almost surely the last display. 
See also \cite{2020arXiv200502566B, 2018arXiv180405871B,2020arXiv200202769B} for more up to date accounts on annealed criteria for the compactness of the ICRT coming from the study of a different random graph model, which gives further evidence for the boundedness conjecture in \cite{MR2063375}. 

Our main theorem is the following.

\begin{theorem}
\label{teoUnicidadEnFnalGralIntro}
Let $({\bf s}_n)$ be a sequence of degree sequences of sizes $(s_n)$ 
and let $\tilde X^n$ be the breadth-first walk of a uniform tree with degree sequence ${\bf s}_n$. 
Assume the existence of constants $b_n\to\infty$ 
such that $X^n=\tilde X^n(s_n \cdot)/b_n$ converges weakly to 
the Vervaat transform $X$ of an unbounded variation exchangeable increment process. 
Let $(Z,C)$ be the Lamperti pair associated to $X$. 
Define $\tilde C^{n}$ as the cumulative population 
and $\tilde Z^{n}$ the population profile of a uniform tree with degree sequence ${\bf s_n}$, as well as
\[
	C^n=\frac{1}{s_n} \imf{\tilde C^n}{\frac{s_n}{b_n}\cdot}
	\quad\text{and}\quad
	 Z^n= \frac{1}{b_n}\imf{\tilde Z^n}{\frac{s_n}{b_n}\cdot}. 
\]Then, under the hypotheses $s_n/b_n\to\infty$ and 
\begin{equation}\label{enoughIndividualsAtTheTopOfTheTreeHypothesis}
\int^{1-}\frac{1}{X_s}ds<\infty\quad a.s.,
\end{equation}we have the joint convergence	
$\paren{X^n,Z^n, C^n}\stackrel{d}{\to} \paren{X,Z,C}$ 
under the product Skorohod topology. 
\end{theorem}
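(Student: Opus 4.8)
The idea is to combine soft compactness of the rescaled cumulative profiles with the rigidity of the deterministic equation \eqref{deterministicIVP}, to pin down the subsequential limit of $C^n$ using the paper's technical estimate on the first generations, and finally to transfer convergence from $C^n$ to $Z^n$ by a composition argument. By the Skorohod representation theorem we may work on a probability space on which $X^n\to X$ almost surely in the Skorohod topology ($X$ being a random element of a Polish space); since $C$ and $Z$ are measurable functionals of $X$, it suffices to prove the joint convergence almost surely on this space. Note that, because $X$ is (the Vervaat transform of an unbounded variation EI process, hence) continuous with $X>0$ on $(0,1)$ and $X=0$ on $\{0\}\cup[1,\infty)$ by Proposition~\ref{proposition_ConvergenceOfBFWs}, we also have $X^n\to X$ uniformly on compacts and $\sup_n\sup_{[0,1]}|X^n|<\infty$.

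\paragraph{Step 1: compactness and the limit equation.}
The rescaled cumulative profiles $C^n$ are non-decreasing with $C^n(0)=1/s_n\to 0$ and $C^n\le 1$ (as $\tilde C^n\le s_n$), so $(C^n)$ is relatively compact for pointwise convergence along a dense set; any subsequential limit $\hat C$ is non-decreasing with values in $[0,1]$, and once we show it is continuous the convergence upgrades to locally uniform (hence Skorohod) convergence. Fix a subsequence along which $C^n\to\hat C$. Rescaling the discrete recursion \eqref{equation_DiscreteLampertiTransform} and using $s_n/b_n\to\infty$ to absorb the discretisation (one generation contributes $O(b_n/s_n)$ to $C^n$, and $\|Z^n\|_\infty=O(1)$), one obtains, uniformly on compacts,
\[
  C^n(t)=\tfrac1{s_n}+\int_0^t Z^n(s)\,ds+o(1),\qquad
  Z^n(t)=X^n\bigl(C^n(t-)\bigr)+o(1).
\]
Letting $n\to\infty$ and using $X^n\to X$ locally uniformly with $X$ continuous, $\hat C$ satisfies $\hat C(0)=0$ and $D_+\hat C=X\circ\hat C$, i.e. $\hat C$ solves \eqref{deterministicIVP} with $f=X$ (the times at which $\hat C$ first reaches $1$ cause no trouble: there $X\circ\hat C$ vanishes and $\hat C$ plateaus).

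\paragraph{Step 2: identification $\hat C=C$.}
By Proposition~\ref{proposition_deterministicIVP}, any solution of \eqref{deterministicIVP} that is not identically $0$ forces $\int_{0+}1/X_s\,ds<\infty$ and is then one of the delayed solutions $c^\lambda$ ($\lambda\in[0,\infty]$), which coincide with the Lamperti transform $c^0$ after an idle period of length $\lambda$, with $c^\infty\equiv 0$. This is where the technical result of the paper enters: trees with law $\p_{{\bf s}_n}$ have enough individuals in the first generations, which translates (on the coupling space, after passing to a further subsequence if needed) into $\hat C(\varepsilon)>0$ for every $\varepsilon>0$. This excludes $c^\infty$ and every $c^\lambda$ with $\lambda>0$, so $\hat C=c^0$; combined with hypothesis \eqref{enoughIndividualsAtTheTopOfTheTreeHypothesis} and the last case of Proposition~\ref{proposition_deterministicIVP}, $c^0$ is strictly increasing and reaches $1$ at the finite time $\int_0^1 1/X_s\,ds$ — in particular it is continuous. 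Since the subsequential limit is the deterministic functional $c^0=C$ of $X$, the whole sequence converges: $C^n\to C$ locally uniformly, hence in the Skorohod topology.

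\paragraph{Step 3: from $C^n$ to $Z^n$, joint convergence, and the main obstacle.}
Now $Z^n=X^n\circ C^n+o(1)$ with $X^n\to X$ in the Skorohod topology, $C^n\to C$ locally uniformly, and $C$ continuous and non-decreasing; by continuity of composition at such pairs, $X^n\circ C^n\to X\circ C=Z$ in the Skorohod topology, and the ``cemetery'' behaviour matches ($Z^n=Z=0$ once $C^n$ resp. $C$ reaches $1$, which happens at a finite time). Hence $(X^n,Z^n,C^n)\to(X,Z,C)$ almost surely on the coupling space, which gives the claimed weak convergence. The crux of the argument is Step~2, namely ruling out the delayed and the trivial solutions of \eqref{deterministicIVP}: this is precisely the content of the lower bound on the cumulative profile in the early generations, obtained through the path transformations and fluctuation theory for exchangeable increment processes; the remaining technical points — the uniform control of the discretisation errors in Step~1 (using $s_n/b_n\to\infty$) and the composition-continuity input above (using that $C$ is continuous, a consequence of the strict monotonicity of the Lamperti transform noted after Proposition~\ref{proposition_deterministicIVP}) — are comparatively routine.
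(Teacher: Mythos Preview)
Your overall architecture --- tightness of $(C^n)$, identification of subsequential limits via the ODE characterization of Proposition~\ref{proposition_deterministicIVP}, invocation of the asymptotic thickness result to force $\Lambda=0$, then composition to recover $Z^n$ --- is exactly the paper's, and you correctly locate the crux in Step~2 and defer it to the 213 transformation machinery (the paper's Theorem~\ref{teoLambdaIsZeroOrInfinity}).

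There is, however, a genuine technical gap running through the proposal: you assert that $X$ is continuous (``Note that, because $X$ is \ldots\ continuous'') and then use this to upgrade Skorohod convergence to locally uniform convergence, to pass to the limit $D_+\hat C=X\circ\hat C$ in Step~1, and to invoke composition continuity in Step~3. This is false in general. The limiting EI process $X^b$ of \eqref{eqnEIP} has jumps $(\beta_i)$ and its Vervaat transform $X$ inherits them; Proposition~\ref{proposition_ConvergenceOfBFWs} says only that the \emph{minimum} of $X^b$ is achieved continuously, not that $X$ is a continuous process. When $X$ has jumps, Skorohod convergence does not give uniform convergence, and you cannot conclude $D_+\hat C=X\circ\hat C$ directly: the paper instead proves the sandwich $X_-\circ\hat C\le\liminf X^n\circ C^n\le\limsup X^n\circ C^n\le X\circ\hat C$ (proof of Proposition~\ref{propoConvergenciaSubsucPerfil}) and shows in \S\ref{subsectionSolutionsToODE} that solutions of this differential \emph{inequality} are already among the $c^\lambda$. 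Similarly, the composition step needs the jump-robust Lemma~\ref{lemmaConvPopulationProfileDeterministic}, which exploits that $C$ is \emph{strictly increasing} off its plateaux (so $X$'s jumps are not ``sat on''), not merely that $C$ is continuous. A minor additional point: your Step~2 implicitly assumes $\hat C\not\equiv 0$; the case $\int_{0+}1/X_s\,ds=\infty$ (where $\hat C\equiv 0=c^0$ is forced by Proposition~\ref{proposition_deterministicIVP} and the thickness theorem is neither needed nor applicable) should be split off first, as the paper does at the start of Section~\ref{section_AsymptoticThickness}.
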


\begin{figure}
\includegraphics[width=\textwidth]{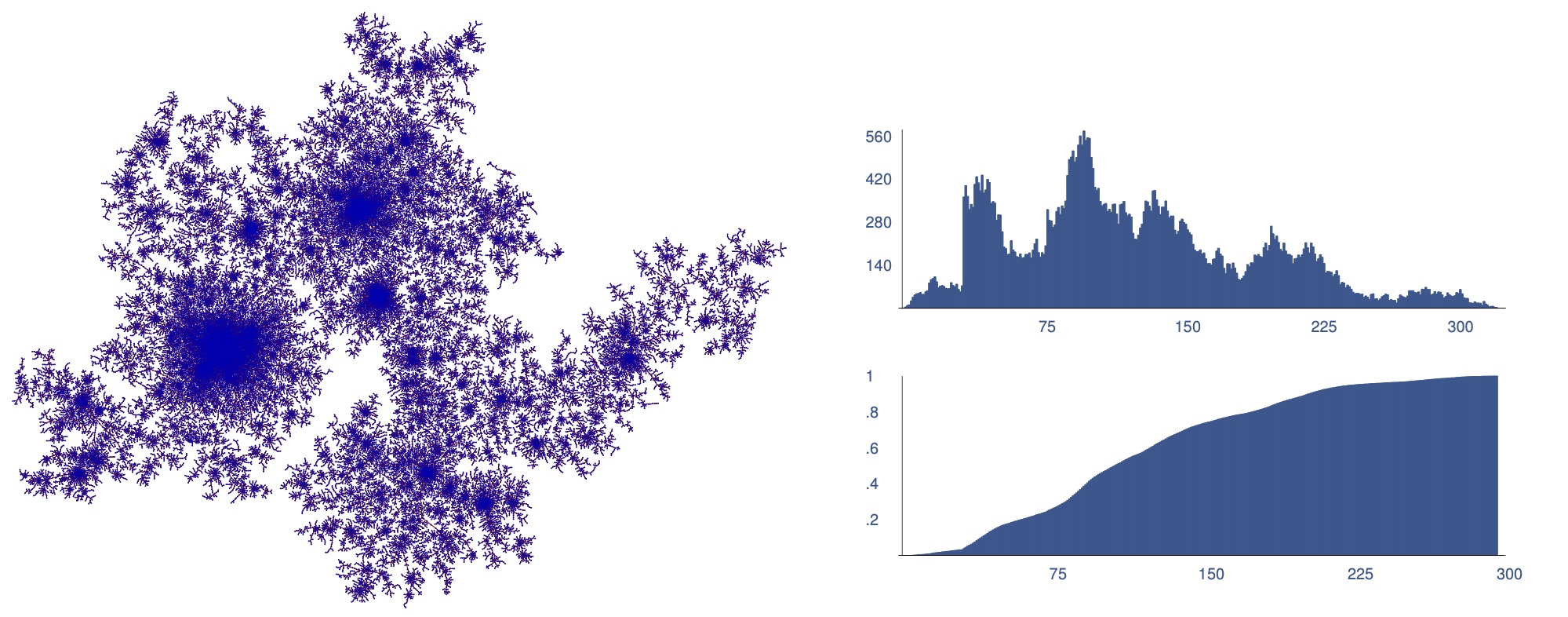}
\caption{Uniformly sampled tree with a given degree sequence of size 58000 whose BFW approximates an EI process with parameters $(0,2,(1/i)_{i
\geq 1})$ together with its population and cumulative profiles. 
}
\label{figTree58000Pareto}
\end{figure}

As a simple application, 
consider the case  where ${\bf s_n}=(N^n_i,i\geq 0)$ is the $k$-ary degree sequence 
with $n$ inner (non-leaf) vertices.
Then,  the sequence $({\bf s_n})$ satisfies the conditions of Proposition \ref{proposition_ConvergenceOfBFWs}. 
Indeed, obviously the sizes $1+nk$ grow to infinity, 
\[
	\sum (i-1)^2N^n_i= 1+k(k-1)n
\]so that we can take $b_n=\sqrt{n}$ 
to see that hypothesis \defin{degree variance} holds and that \defin{hubs} holds with $\beta=0$. 
The limiting EI process then has parameters $(0,\sqrt{k(k-1)},0)$. 
Hence, the limiting EI process $X^b$ is a (non-zero) multiple of the Brownian bridge 
(which satisfies Hypothesis \defin{unbounded variation}); 
its Vervaat transform $X$ is called the normalized Brownian excursion (cf. \cite{MR515820}). 
Proposition \ref{proposition_CompactnessOfICRT} tells us that $1/X$ is integrable; 
since $s_n/b_n\sim k\sqrt{n}\to \infty$, the hypotheses of Theorem \ref{teoUnicidadEnFnalGralIntro} therefore hold 
and we conclude a limit theorem for the profiles of $k$-ary trees with $n$ inner vertices as $n\to\infty$. 
The scaling limit of the profile is the Lamperti transform of the normalized Brownian excursion; 
this has the same law as the total local time process of the normalized Brownian excursion thanks to a celebrated theorem of Jeulin (cf. \cite{MR884713} and \cite{MR2063375}). 
We could similarly treat the case of restricted degree trees, with the same scaling sequences and scaling limit. 

More generally, one can easily build degree sequences satisfying the hypotheses of Theorem \ref{teoUnicidadEnFnalGralIntro}; see Section \ref{section_ExampleSection}. 
Also, the theorem can also be applied to mixtures of trees with a given degree sequence. 
This is exemplified in the following more complex application to the the convergence of the rescaled profile of CGW($n$) trees of Example \ref{ExampleOfCGW}.

Recall that a distribution $\mu=(\mu_n,n\geq 0)$ is called \emph{critical} if $\sum n\mu_n=1$, and \emph{aperiodic} if the greatest common divisor of all $n$ with $\mu_n>0$ is one. 

\begin{corollary}
	\label{corolarioConvergenciaCGWVarFinIntro}
	Consider a CGW($n$) tree with offspring distribution $\mu$, which is critical
	 and aperiodic. 
	Assume also that either $\mu$ has finite variance $\sigma^2$ 
	or that its tails satisfy $\imf{\overline \mu}{k}=k^{-\alpha}\imf{L}{k}$ for some slowly varying function $L$. 
	Denote by $X^n$, $C^n$ and $Z^n$ its rescaled breadth-first walk, cumulative profile and profile 
	as in Theorem \ref{teoUnicidadEnFnalGralIntro}, 
	where $s_n=n$ and $b_n$ equals  $\sqrt{n}$ in the finite variance case 
	and $n^{1/\alpha}\tilde L(n)$ 
	for some  slowly varying function $\tilde L$ 
	otherwise. 
	Then, we have the joint convergence
	\begin{equation}\label{eqnPerfilReescaladoVariFinIntro}
	\paren{X^n,Z^n,C^n}
	\stackrel{d}{\to}\paren{X,Z,C},
	\end{equation}where $X$ is the normalized Brownian excursion (multiplied by $\sigma^2$) in the first case and otherwise is the normalized stable excursion and $(Z,C)$ is the Lamperti pair associated to $X$. 
\end{corollary}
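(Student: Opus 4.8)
The plan is to deduce the statement from Theorem~\ref{teoUnicidadEnFnalGralIntro} by means of the mixing representation recalled in Example~\ref{ExampleOfCGW}: conditionally on its degree sequence, a CGW$(n)$ tree is uniform with that degree sequence. Let $\mathcal{D}_n$ be the (random) degree sequence of the CGW$(n)$ tree $\Theta_n$, so that, for any bounded continuous functional $F$ on the product Skorohod space, $\se[F(X^n,Z^n,C^n)]=\se[\Phi_n(\mathcal{D}_n)]$ with $\Phi_n({\bf s}):=\se_{\p_{\bf s}}[F(X^n,Z^n,C^n)]$. It therefore suffices to show that, almost surely, the whole sequence $(\mathcal{D}_n)_{n\ge 1}$ satisfies the hypotheses of Proposition~\ref{proposition_ConvergenceOfBFWs} and of Theorem~\ref{teoUnicidadEnFnalGralIntro}, and then to average over the environment.

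Under the stated assumptions (aperiodicity guarantees $\p(|\Theta|=n)>0$ for all large $n$, so that $\Theta_n$ is well defined), the classical scaling limits for conditioned Galton--Watson trees apply (see e.g.\ \cite{MR1166406,MR1964956}): the depth-first walk of $\Theta_n$ — which has the same law as its breadth-first walk — rescaled as in the statement converges weakly to the normalised Brownian excursion in the finite variance case and to the normalised stable excursion in the stable case; moreover, jointly, the rescaled ordered child sequence $d^n/b_n$ converges to the ranked jump sequence $(\beta_i)_{i\ge 1}$ of the limiting excursion and $b_n^{-2}\sum_i (i-1)^2 N^n_i$ converges to $\sigma^2+\sum_i\beta_i^2$, where in the finite variance case $\beta_i\equiv 0$ and $\sigma^2=\mathrm{Var}(\mu)$ are deterministic, whereas in the stable case $\sigma=0$ and the $\beta_i$ are random with $\#\{i:\beta_i>x\}\sim c\,x^{-\alpha}$ as $x\to 0$, almost surely. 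By Skorohod's representation theorem we may realise all the $\Theta_n$ on one probability space so that these convergences hold almost surely. On the corresponding almost sure event, for each fixed $\omega$ the deterministic sequence $(\mathcal{D}_n(\omega))_n$ satisfies \defin{size} ($s_n=n\to\infty$), \defin{hubs} (with the prescribed $b_n$ and limits $\beta_i(\omega)$), \defin{degree variance} (with limit $\sigma^2+\sum_i\beta_i(\omega)^2$) and \defin{unbounded variation} — in the finite variance case because $\sigma^2>0$, and in the stable case because $\sum_i\beta_i(\omega)=\int_0^\infty \#\{i:\beta_i(\omega)>x\}\,dx=\infty$, the integrand being $\sim c\,x^{-\alpha}$ near $0$ with $\alpha>1$. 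Hence Proposition~\ref{proposition_ConvergenceOfBFWs} yields the main hypothesis of Theorem~\ref{teoUnicidadEnFnalGralIntro}, namely that the breadth-first walk of the uniform tree with degree sequence $\mathcal{D}_n(\omega)$ converges to the Vervaat transform $\widehat X^\omega=V(X^{b,\omega})$ of the unbounded-variation EI process \eqref{eqnEIP} with parameters $(0,\sigma,\beta(\omega))$. The remaining hypotheses of Theorem~\ref{teoUnicidadEnFnalGralIntro} also hold: $s_n/b_n=n/b_n\to\infty$ because $b_n$ is $\sqrt n$ or $n^{1/\alpha}\tilde L(n)$ with $\alpha>1$; and \eqref{enoughIndividualsAtTheTopOfTheTreeHypothesis} holds, in the finite variance case directly by Proposition~\ref{proposition_CompactnessOfICRT} (since $\sigma^2>0$), and in the stable case, for almost every $\omega$, by the remark following Proposition~\ref{proposition_CompactnessOfICRT} (since $\beta(\omega)$ has power-law exponent $\alpha\in(1,2)$), equivalently by the known compactness of the $\alpha$-stable tree.

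Now fix a bounded continuous $F$. On the almost sure event above, Theorem~\ref{teoUnicidadEnFnalGralIntro} applied to the deterministic sequence $(\mathcal{D}_n(\omega))_n$ gives $\Phi_n(\mathcal{D}_n(\omega))\to g(\omega):=\se[F(\widehat X^\omega,\widehat Z^\omega,\widehat C^\omega)\mid\omega]$, where $(\widehat Z^\omega,\widehat C^\omega)$ is the Lamperti pair of $\widehat X^\omega$ and the conditional expectation integrates only the Brownian bridge and the uniform variables entering \eqref{eqnEIP}. Since $|\Phi_n|\le\|F\|_\infty$, bounded convergence gives $\se[F(X^n,Z^n,C^n)]=\se[\Phi_n(\mathcal{D}_n)]\to\se[g]=\se[F(\widehat X,\widehat Z,\widehat C)]$, where $\widehat X=V(X^b)$ with $X^b$ of the form \eqref{eqnEIP} for the parameters $(0,\sigma(\omega),\beta(\omega))$ produced by the environment, and $(\widehat Z,\widehat C)$ is its Lamperti pair. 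Taking $F$ depending only on the first coordinate shows $X^n\stackrel{d}{\to}\widehat X$; but $X^n\stackrel{d}{\to}X$ as well, $X$ being the normalised Brownian (resp.\ stable) excursion of the statement, so $\widehat X\stackrel{d}{=}X$. As the Lamperti pair is a deterministic measurable functional of the excursion, $(\widehat X,\widehat Z,\widehat C)\stackrel{d}{=}(X,Z,C)$, which is \eqref{eqnPerfilReescaladoVariFinIntro}.

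Once the classical scaling limits for CGW$(n)$ trees and for their degree sequences are available, the four hypotheses of Proposition~\ref{proposition_ConvergenceOfBFWs}, the growth condition $s_n/b_n\to\infty$ and the integrability \eqref{enoughIndividualsAtTheTopOfTheTreeHypothesis} are all routine to verify. The one point that requires care is organisational: Theorem~\ref{teoUnicidadEnFnalGralIntro} is phrased for deterministic degree sequences, so one must condition on the random degree sequence, check the hypotheses in the environment, and then confirm that averaging back over the environment reproduces precisely the Lamperti pair of the unconditional excursion — which uses that the Lamperti transformation is a deterministic measurable map and that weak limits are unique.
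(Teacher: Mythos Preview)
Your proof is correct and follows essentially the same route as the paper: condition on the (random) degree sequence, use Skorohod's representation so that the hypotheses of Proposition~\ref{proposition_ConvergenceOfBFWs} and Theorem~\ref{teoUnicidadEnFnalGralIntro} hold almost surely in the environment, apply Theorem~\ref{teoUnicidadEnFnalGralIntro} for each realization, and then average by bounded convergence. The only minor difference is that the paper verifies the integrability condition $\int_0^1 1/X_s\,ds<\infty$ in the stable case via a dedicated lemma (using absolute continuity of the stable bridge with respect to the stable process together with the L\'evy envelopes of Proposition~\ref{proposition_LevyEnvelopes}), whereas you invoke Proposition~\ref{proposition_CompactnessOfICRT} for the almost-sure power-law jumps of the stable excursion; both arguments are valid.
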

The normalized stable excursion equals the Vervaat transform of the bridge of a stable process. 
More information about it can be found in \cite{MR1465814}. 
This gives another proof of Aldous's conjecture \cite{MR1166406} in the finite variance case, 
first proved in \cite{MR1608230}. 
Kersting analyzed the second case in \cite{kersting2011height}. 
We would be able to give a further extension to the case where $\mu$ is allowed to vary with $n$. 
What is lacking are local limit theorems for triangular arrays, 
which are relevant to obtain the scaling limit of L\'evy bridges, 
as can be seen from the proof. 

We conclude this section with an overview of the proof of Theorem \ref{teoUnicidadEnFnalGralIntro}. 
First, we perform a deterministic analysis of the Lamperti transformation and its probabilistic counterpart. 
In particular, we obtain subsequential compactness and, through Proposition \ref{proposition_deterministicIVP}, 
that all subsequential limits in the context of Theorem \ref{teoUnicidadEnFnalGralIntro} are of the form $(X,Z^\Lambda, C^\Lambda)$ for some random $\Lambda$. 
(Recall that $C^\Lambda$ is a shift of $C$ and that $Z^\Lambda$ is its right-hand derivative). 
In order to prove convergence, we must prove that all subsequential limits agree: this is done by showing that
$\Lambda=0$ almost surely, 
which can be interpreted as an asymptotic thickness near the root (or at the base) of our random tree. 
A novel path transformation for discrete EI process, called the \emph{213 transformation}, is introduced to show that, under the conditions of Theorem \ref{teoUnicidadEnFnalGralIntro}, our random tree sequence is asymptotically thick. 
\begin{definition}[213 Transformation]\label{defi213Transformation}
Let $\tau$ be a plane tree labeled in depth-first order. 
	Let $v\in \{2,\ldots,|\tau| \}$ be a vertex in $\tau$ and let $u$ be any strict ancestor of $v$.
	Cut $\tau$ at $u$ and $v$, obtaining three subtrees keeping their original labels: 
	$\tau(1,u)$ has the original root, 
	$\tau(u,v)$ with root $u$, 
	and $\tau(v)$ with root $v$.
	 (If $v$ is a leaf, then $\tau(v)$ is empty) 
	Construct a new tree $\Psi_{u,v}(\tau)$ by grafting (or pasting) the root of $\tau(v)$ at $u$, 
	and then further grafting this structure at the leaf $v$ of $\tau(u,v)$. 
	If $w$ is the depth-first walk of $\tau$, denote by $\Psi_{u,v}(w)$ the depth-first walk of $\Psi_{u,v}(\tau)$. 
\end{definition}
An example is shown in Figure \ref{figTrans213}. 
We obtain in this way a new tree with the same degree sequence; 
more importantly, when choosing $u$ and $v$ adequately, this transformation preserves the law $\p_{\bf s}$. 
\begin{figure}
	\centering
	\subfloat[]{
	\includegraphics[width=.3\textwidth]{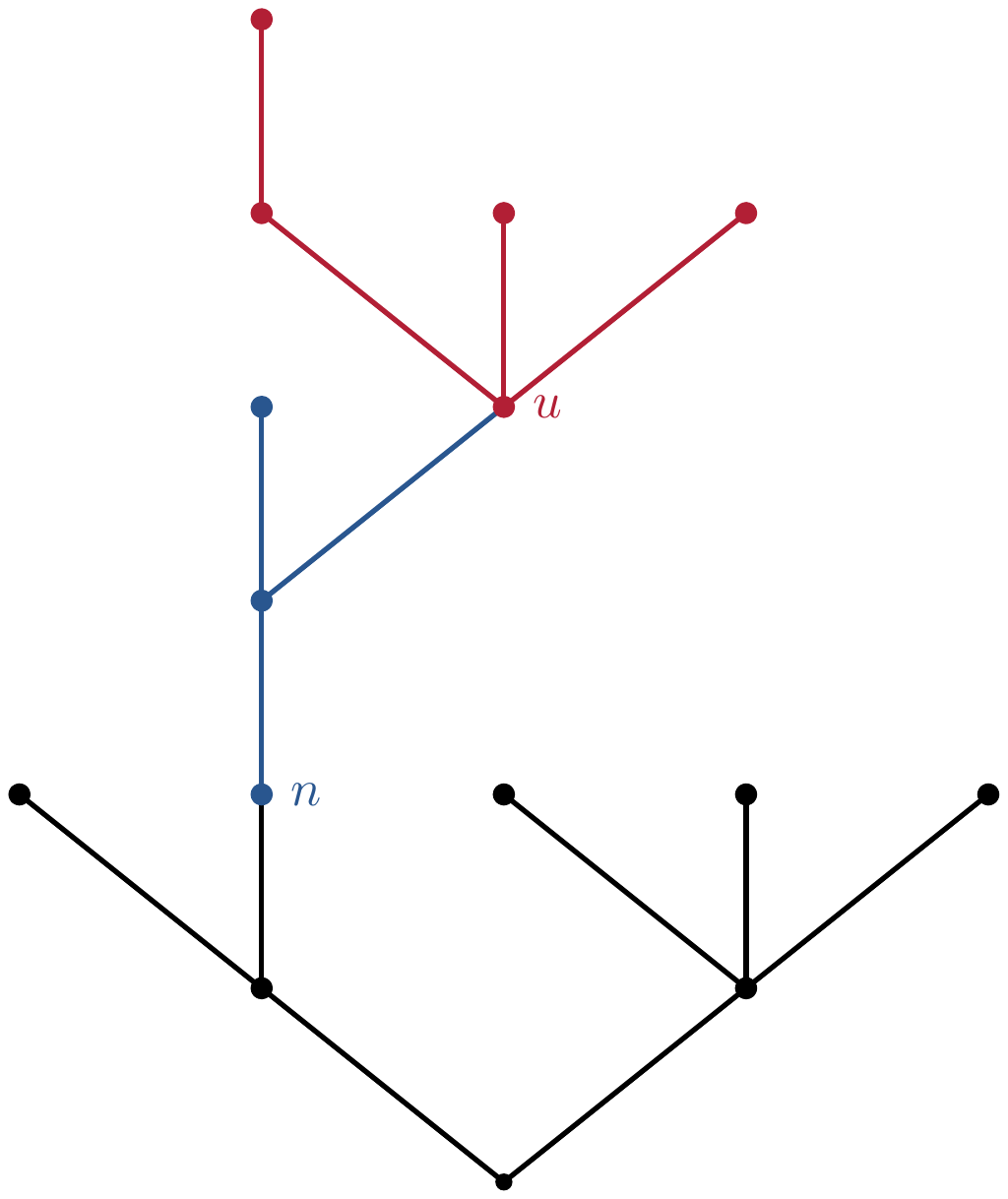}}
	\quad\quad\quad\quad\quad\quad\quad
	\subfloat[]{\includegraphics[width=.3\textwidth]{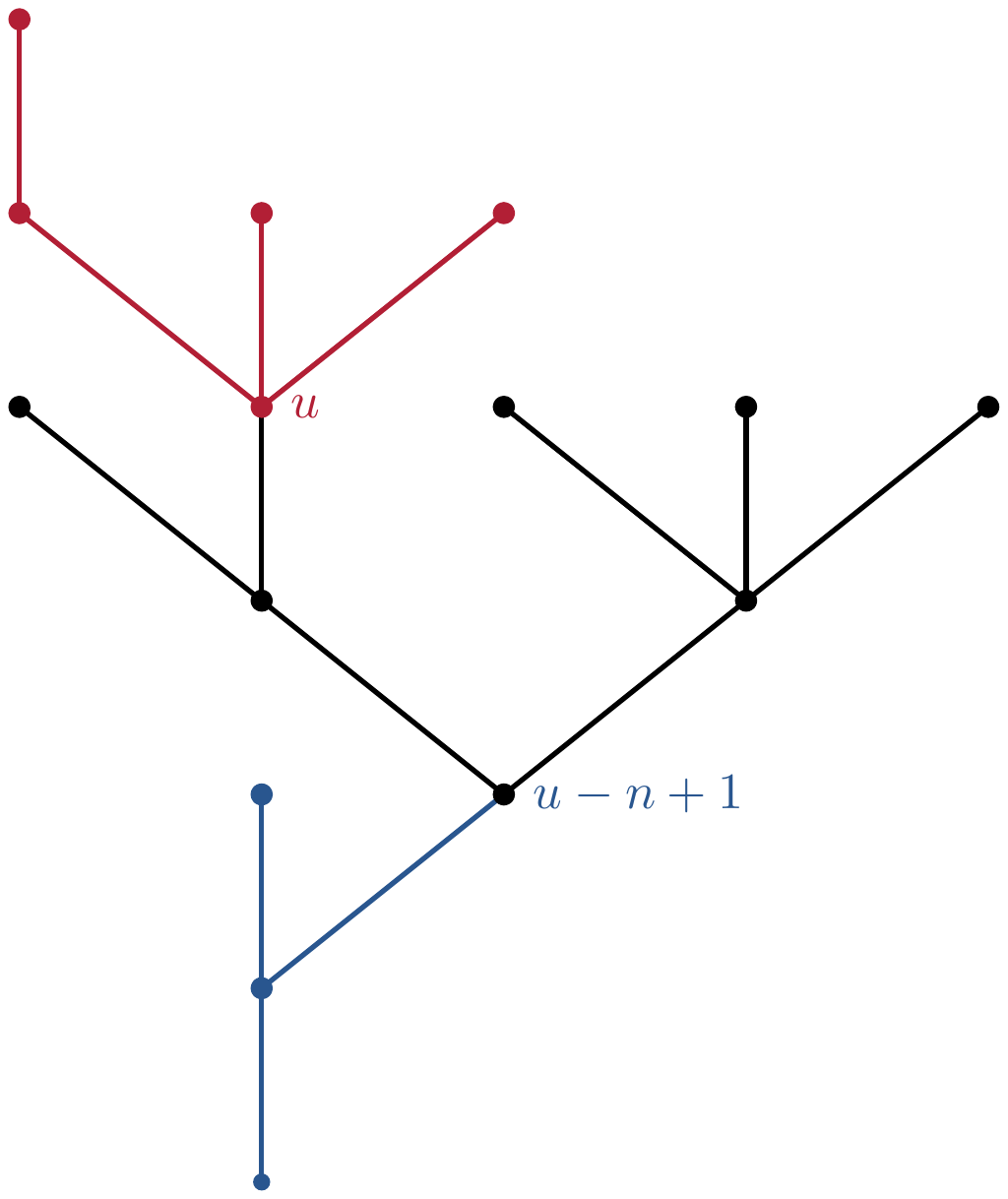}}
	\caption{
	The effect of the 213 transformation on left tree at vertex $u=7$ and its ancestor at height $3$. 
	} 
	\label{figTrans213}
\end{figure}
\begin{proposition}\label{propoIgualdadDistribucionalIntro}
	Let $W$ be the DFW of a tree $\Theta$ with law $\p_{{\bf s}}$. 
	Consider an independent uniform r.v. $V$ on $\{2,\ldots, |{\bf s}| \}$, and a positive integer $h$. 
	When the vertex $V$ in depth-first order has height greater than $h$, 
	let $U$ be the only ancestor of $V$ at distance $h$ from $V$, 
	and define $\tilde{W}:=\Psi_{V,U}(W)$. 
	If the height of $U$ is $\leq h$, set $\tilde{W}=W$. 
	Then $(\tilde W, V)$ and $(W,V)$ have the same law. 
\end{proposition}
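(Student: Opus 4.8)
The plan is to realise $(\tilde W,V)$ as the image of $(W,V)$ under a bijection of the underlying finite set, so that the equality in law is automatic. Write $s=\abs{{\bf s}}$ and let $\mathcal T_{{\bf s}}$ be the (finite) set of plane trees with degree sequence ${\bf s}$, on which $\p_{{\bf s}}$ is the uniform law; since $V$ is independent of $\Theta$ and uniform on $\set{2,\dots,s}$, the pair $(\Theta,V)$ is uniform on $\mathcal T_{{\bf s}}\times\set{2,\dots,s}$. For a plane tree $\tau$ and $v\in\set{2,\dots,s}$, write $\hat\tau$ (suppressing the dependence on $v$ and $h$) for the tree obtained by the $213$ transformation of Definition \ref{defi213Transformation} at the $v$-th depth-first vertex of $\tau$ and that vertex's ancestor at distance $h$ when the former has height $>h$, and $\hat\tau=\tau$ otherwise; by construction $\tilde W$ is the depth-first walk of $\hat\Theta$ (with $v=V$). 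It therefore suffices to show that $\Phi\colon(\tau,v)\mapsto(\hat\tau,v)$ is a bijection of $\mathcal T_{{\bf s}}\times\set{2,\dots,s}$: then $\Phi(\Theta,V)$ is again uniform, and applying ``take depth-first walk'' to the first coordinate gives $(\tilde W,V)$ the law of $(W,V)$. As $\Phi$ fixes $v$ and is the identity on the pairs with height $\le h$, it is enough to fix $v_0$ and show that $\Phi_{v_0}\colon\tau\mapsto\hat\tau$ (taken with $v=v_0$) is a bijection of $\mathcal T_{{\bf s}}$, equivalently that it is injective.

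Next I would describe the action of $\Phi_{v_0}$ on the depth-first walk. Fix $\tau$ whose $v_0$-th depth-first vertex $v$ has height $>h$, let $U$ be its ancestor at distance $h$ and $r$ the depth-first rank of $U$. The depth-first walk of $\tau$ splits into three consecutive blocks of increments: $A$ at ranks $1,\dots,r-1$ (everything explored before the subtree at $U$), $B$ at ranks $r,\dots,v_0-1$ (the exploration from $U$ down to the vertex preceding $v$, i.e.\ the spine from $U$ to $v$ together with the subtrees branching off to its left), and $C$ at ranks $v_0,\dots,s$ (the subtree at $v$ followed by everything explored afterwards). Unwinding Definition \ref{defi213Transformation}, $\hat\tau$ is the tree whose depth-first walk is the concatenation $B\,A\,C$ — the ``$213$'' of these three blocks. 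From this description I would read off that $\hat\tau$ has the same multiset of out-degrees as $\tau$ (which also reproves the degree-sequence assertion quoted before the proposition), that $B\,A$ still has length $v_0-1$ so that $C$ occupies ranks $v_0,\dots,s$ of the new walk — hence the $v_0$-th vertex of $\hat\tau$ is again $v$, carrying the same subtree — and, using the standard formula expressing a vertex's height along a depth-first walk through the running minima of the walk, that this vertex still has height $>h$; in particular $\Phi_{v_0}$ maps the set of trees on which it acts non-trivially into itself.

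It then remains to invert $\Phi_{v_0}$. Given the output tree $\rho$ together with $v_0$ and $h$, the last $s-v_0+1$ increments of its depth-first walk are exactly the block $C$ and are read off directly; the remaining prefix is $B\,A$, and to recover $\tau$ one needs only the length of $B$, that is the integer $r$. The decisive point is that $r$ is determined intrinsically by $\rho$: the boundary inside the prefix between the relocated $B$ and the relocated $A$ is characterised by a condition on the running minima of the depth-first walk of $\rho$ together with the integer $h$, reflecting that in $\tau$ the block $B$ descends from $U$ exactly $h$ levels to the vertex before $v$. Once $r$ is recovered one splits the prefix, reassembles $A\,B\,C$, and obtains the depth-first walk of $\tau$; this exhibits a left inverse of $\Phi_{v_0}$, so $\Phi_{v_0}$ is injective, hence a bijection, and the proposition follows.

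The genuinely delicate step is this intrinsic characterisation of the block boundary — equivalently, a careful verification that $\Phi_{v_0}$ and the reconstruction above are mutually inverse; note that $\Phi_{v_0}$ is \emph{not} an involution, as small examples already produce $3$-cycles. It is here that it matters that $U$ is the ancestor of $v$ at distance \emph{exactly} $h$, so that the depth-$h$ requirement singles out a unique splitting of the prefix; making this precise amounts to translating ``$v$ lies $h$ levels below $U$'' into a statement about the running minima of the walk and checking that the swap $A\,B\,C\mapsto B\,A\,C$ leaves the relevant minima undisturbed. A cleaner route, more in the spirit of the rest of the paper, is to transport the argument to the representation of the depth-first walk of a uniform tree with degree sequence ${\bf s}$ as the Vervaat transform of a uniformly shuffled bridge carrying the prescribed increments, under which the $213$ block swap becomes a more transparent rearrangement of the exchangeable increments.
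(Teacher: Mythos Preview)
Your overall strategy—fix the second coordinate, reduce to showing that $\Phi_{v_0}$ is a bijection of the finite set of excursions $\mc{E}_{\bf s}$, and then use that $\Theta$ is uniform and $V$ independent—is exactly the paper's approach, and your reduction is carried out correctly.

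The gap is in your description of the transformation itself, which then undermines your inversion. The $213$ transformation is \emph{not} the three-block swap $ABC\to BAC$ that you write down. With $a<d$ the DFW ranks of the ancestor and descendant and $d_a,d_d$ the sizes of their subtrees, the original increment sequence splits into the five contiguous pieces
\[
P_1=[1,a-1],\quad P_2=[a,d-1],\quad P_3=[d,d-1+d_d],\quad P_4=[d+d_d,\,a-1+d_a],\quad P_5=[a+d_a,\,s],
\]
and the paper's explicit DFW formula for $\Psi$ rearranges them as $P_2\,P_1\,P_3\,P_5\,P_4$: the piece $P_4$ (the part of the ancestor's subtree to the right of the spine) and $P_5$ (everything after the ancestor's subtree) are \emph{also} swapped. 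Your block $C$ lumps $P_3,P_4,P_5$ together and keeps them fixed, so your claim that ``the last $s-v_0+1$ increments of the transformed walk are exactly the block $C$'' is false in general, and the inversion you sketch (recover $C$ first, then split the prefix) does not get started.

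The paper's route to bijectivity is cleaner and sidesteps any delicate running-minimum bookkeeping. It records (in the lemma immediately preceding the proposition) that the inverse of $\Phi_{h,u}$ is itself a $213$ transformation: the vertex at DFW rank $u$ in the transformed tree again has height $>h$ (a point you do observe), its ancestor at distance $h$ in the \emph{new} tree sits at rank $u-a+1$, and one checks directly that $\Psi_{u,a}\circ\Psi_{u,\,u-a+1}=\id$. Since $\Phi_{h,u}$ maps the finite set $\mc{E}_{\bf s}$ to itself and is onto, it is a bijection, and the proposition follows in one line from uniformity and independence. If you want to repair your argument rather than adopt the paper's, the fix is to work with the correct five-block rearrangement and then notice that applying the same rule (find the ancestor of rank $u$ at distance $h$, do $213$) a second time undoes it—no need to locate the $A$/$B$ boundary by hand.
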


To apply this in the context of Theorem \ref{teoUnicidadEnFnalGralIntro}, 
consider an arbitrary $\lambda\in \re$ and a sequence of trees $\tau_n$ with law $\p_{{\bf s_n}}$; 
we keep the same notation for a subsequence where the triplet $(X_n,C_n,Z_n)$ converges.  
Then, if $\proba{\Lambda>\lambda}>0$, then there exists a \emph{giant subtree} at the top of $\tau_n$ 
(one having a positive proportion of vertices) and of height smaller than $\lambda_1<\lambda/2$ 
(all of this with non-vanishing probability). 
Consider then the effect of applying the 213 transformation. 
When the uniform random variable $V$ falls in this giant subtree, with $U$ being its ancestor at distance $\lambda_1$, the transformed subtree will become thick \emph{before} height $2\lambda_1$, 
thin \emph{between} heights $\{2\lambda_1,\ldots, \lambda \}$ and thick \emph{after} height $\lambda$ 
(as in Figure \ref{figLambdaIsZero}). 
In other words, the transformed cumulative population (which has the same law) will have a scaling limit which is increasing before $2\lambda_1$ and after $\lambda$ but that will have a constancy interval in between. 
From the characterization of solutions to \eqref{eqnLampertiTransformIntro}, this represents a contradiction, showing that $\proba{\Lambda=0}=1$.

Regarding the organization of the paper, 
Section \ref{section_DeterministicResultsOnLampertiTransformation} 
contains the deterministic analysis around the Lamperti transformation, 
including the characterization of solutions of Proposition \ref{proposition_deterministicIVP}, 
a result on subsequential (scaling) limits of the discrete Lamperti transformation 
and results on convergence of hitting times of the cumulative population process 
which are useful for the analysis of giant subtrees. 
Then, Section \ref{section_EIProcesses} contains results on exchangeable increment processes  
which give  the convergence of breadth-first walks of Proposition \ref{proposition_ConvergenceOfBFWs}, 
weak subsequential compactness of profiles, 
the compactness criteria for the ICRT of Proposition \ref{proposition_CompactnessOfICRT} 
and invariance under the 312 transformation of Proposition \ref{propoIgualdadDistribucionalIntro}. 
Our main result (Theorem \ref{teoUnicidadEnFnalGralIntro}) is proved in Section \ref{section_AsymptoticThickness}. 
Finally, Section \ref{section_ExampleSection} contains the application of the main theorem to Galton-Watson trees (Corollary \ref{corolarioConvergenciaCGWVarFinIntro}) and the construction of degree sequences which show the general applicability of Theorem \ref{teoUnicidadEnFnalGralIntro}. 

\section{Deterministic results on the Lamperti transformation}
\label{section_DeterministicResultsOnLampertiTransformation}
In this subsection we will collect results on the (deterministic) analysis of solutions to the initial value problem \eqref{deterministicIVP} and its discrete counterpart \eqref{equation_DiscreteLampertiTransform}. 
First of all, the latter can be seen as a discretization of the former: 
as argued in \cite{MR3098685}, 
it corresponds to applying a Euler method of span $1$ 
to the initial value problem. 
Indeed, a discretized initial value problem of the form \eqref{deterministicIVP} can be written as
\[
	c^\sigma(0)=0,\quad
	h^\sigma(t):=D_+ c^\sigma(t)= f\circ c^\sigma(\sigma \floor{t/\sigma}).
\]Note that as for the discrete Lamperti transformation, the (unique) solution can be obtained recursively (see \cite[p. 1594]{MR3098685}). 
Also, note that if $f$ is the breadth-first walk of some plane tree $\tau$ 
(extended by constancy on each $[n,n+1)$ and on $[|\tau|, \infty)$), then $h^1$ is the Lamperti transform of $f$. 
Finally, the effect of scaling can also be incorporated: 
if $f_n$ is the breadth-first walk of a plane tree $\tau_n$ 
and there exist scaling constants $s_n,b_n$ such that $f_n(s_n\cdot)/b_n$ converges on Skorohod space 
to some \cadlag\ function $f$, our method will be based on showing that
\[
	h^1(w_n)(b_n\cdot /s_n) 
	=h^{b_n/s_n}(f_n)
	\to  h^0(f)
\](the Lamperti transform of $f$) whenever the discretization parameter converges to zero. This highlights the role of $b_n/s_n$ as a discretization parameter and explains the hypothesis $s_n/b_n\to\infty$ in Theorem \ref{teoUnicidadEnFnalGralIntro}. 
This method was introduced in \cite{MR3098685}, 
although in the setting there \eqref{deterministicIVP} has a unique solution. 

We first establish Proposition \ref{proposition_deterministicIVP}.
Then we will examine the convergence of hitting times of cummulative population processes, 
which are needed to ensure that our random trees have enough individuals near the top of the tree.

Let us proceed to the proof of Propositon \ref{proposition_deterministicIVP}. 
\subsection{Characterization of solutions to the ODE}
\label{subsectionSolutionsToODE}

Consider then an excursion type function $f$ as in the statement and let $(c^0,h^0)$ be the Lamperti pair of $f$. 
If $\int_{0+}1/f(s)\, ds=\infty$, then, by definition, $i=\infty$ on $(0,\infty)$ and then $c^0=h^0=0$. 
Let $f_-(t)=f(t-)$ be the left-continuous version of $f$ (where $f(0)=0$). 
We will actually prove the characterization result for any solution to the differential inequality
\begin{equation}
\label{eqnDesgFnal}
 	\int_s^t f_-\circ c(r)\, dr \leq \imf{c}{t}-\imf{c}{t}\leq \int_s^t f\circ c(r)\, dr
\end{equation}which is more useful when discussing scaling limits. 
Any solution to the above equation (or to \eqref{deterministicIVP}) is non-decreasing, 
since $f$ and $f_-$ are non-negative. 
Note that, because $\Delta f(t)\geq 0$, 
any solution to \eqref{deterministicIVP} actually solves \eqref{eqnDesgFnal}. 
Assume there exists a non-zero solution $\tilde c$ to \eqref{eqnDesgFnal} and define
\[
		\lambda=\inf\{t>0:\tilde c(t)>0 \}<\infty.
\]Then, for $t>0$
\[
		\int^{\lambda+t}_\lambda f_-\circ c(r)\,dr\leq c(\lambda+t)-c(\lambda)\leq \int^{\lambda+t}_\lambda f\circ c(r)\,dr.
\]The function $c_\lambda(\cdot)=\tilde c(\lambda+\cdot)$ satisfies \eqref{eqnDesgFnal} and is positive on $(0,\infty)$. 
Recall that $c^\lambda$ is the shift of the Lamperti transform $c^0$ by $\lambda$. 
We now show that $c_\lambda= c^0$, 
which proves that $c=c^\lambda$ and that $c^\lambda$ is a solution to \eqref{eqnDesgFnal} and therefore to \ref{deterministicIVP}.

	To ease notation, we write $\tilde c$ instead of $c_\lambda$. 
	Let $a=\inf\set{t>0:\tilde c(t)=1}$. 
	Then $\tilde c$ is constant on $(a,\infty)$ because $f$ is absorbed at zero at time $1$. 
	By construction, $\tilde c$ is positive on the interval $(0,\infty)$ and, since $f>0$ on $(0,1)$, 
	$\tilde c$ is strictly increasing on $(0,a)$ 
	Hence, $f_-\circ \tilde c=f\circ \tilde c$ except at a countable number of points on $(0,a)$, 
	so that we have equalities in \eqref{eqnDesgFnal}.
	Hence, $\tilde c$ is also a solution to \eqref{deterministicIVP} which is strictly increasing on $(0,a)$. 
	
	Let $\tilde\iota$ be the inverse of $\tilde c$ on $[0,1)$. 
	On $[0,1)$, $\tilde\iota$ is increasing, continuous, and with values on $[0,\infty)$. 
	Let $0<r<1$. 
	From the definition of the IVP, 
	$D_+ \tilde c(\tilde\iota(r))=f\circ \tilde c(\tilde\iota(r))=f(r)>0$. 
	Hence, by the formula for the derivative of an inverse function and the fundamental theorem of calculus 
\[
		\infty>\tilde\iota(t)-\tilde\iota(s)=\int_s^tD_+\tilde\iota(r)\,dr=\int_s^t\frac{dr}{f(r)}
		\quad \text{ for }0<s<t<1.
\]Because $\tilde c$ is continuous at 0, then $\tilde\iota(s)\to 0$ as $s\downarrow 0$. 
	Therefore, we obtain
	\begin{equation}\label{eqni_tIsFinite}
	\int_0^t\frac{dr}{f(r)}=\tilde\iota(t)<\infty
	\quad\text{ for all } t\in [0,1).
	\end{equation}We conclude that the Lamperti transform $c^0$ is not zero 
	and that its inverse, $i$, equals $\tilde\iota$ on $[0,1)$. 
	It remains to prove that $a=i(1)$. 
	But this is clear from \eqref{eqni_tIsFinite}, because $\lim_{t\uparrow 1} \tilde\iota=\lim_{t\uparrow 1} i$.
	This implies that $\tilde c= c^0$ and that, therefore, $c^0$ is a solution to \ref{deterministicIVP}. 
	We also deduce that the existence of a non-zero solution to \ref{deterministicIVP} implies finitude of $i$, 
	which also says that $0$ is the only solution if $i(0+)=\infty$. 
	
	Finally, applying the above paragraph to $c$, we have shown that $c$ is strictly increasing on $(0,i(1-))$ 
	with inverse $i$. 
	Hence, $c$ reaches $1$ in finite time if and only if $i(1-)<\infty$. 
	Otherwise, $c$ is strictly increasing, never reaching the value $1$. 
	
	Recall from the proof that $\lambda=\inf\set{t\geq 0: \tilde c(t)>0}$ 
	when $\int_{0+}1/f(s)\, ds<\infty$ and $\tilde c$ is a non-trivial solution to \eqref{deterministicIVP}. 
	However, 
	we can define $\lambda$ even when $\tilde c$ is a trivial solution or $\int_{0+}1/f(s)\, ds=\infty$. 
	With the usual convention $\inf\emptyset=\infty$, we see that $\lambda=\infty$ in this case, 
	but that the equality $\tilde c=c^\lambda$ holds in both cases. 
	
	The preceding argument is based on the analysis of continuity properties of the Lamperti transformation of \cite[\S 4.1]{MR3098685}, where it is noted that the zero sets of $f$ are responsible for the non-uniqueness of the ODE $c'=f\circ c$. We will also use the fact from the latter paper that if $f>0$ on $[0,1)$ and $f=0$ on $[1,\infty)$, then the ODE has a unique solution $c$ which is strictly increasing until it reaches the value $1$. 
	
\subsection{The composition operation and convergence of profiles}
\label{subsection_SubsequentialScalingLimits}

In this subsection, we note that, whenever we have the joint convergence of the (scaled) breadth-first walk and the cumulative profile (on the product Skorohod space), we can already deduce convergence together with the profile. 
The result follows from continuity considerations regarding the composition operation on Skorohod space studied in \cite{MR561155}, \cite{MR1876437} and \cite{MR2479479}. 

\begin{lemma}\label{lemmaConvPopulationProfileDeterministic}
	Let $f_n,f$  be non-negative \cadlag\ functions on $\re_+$ and such that $\imf{f_n,f}{0}\geq 0$, and $f_n,f=0$ on $[1,\infty)$. 
	Also, let $c_n,c$ be non-decreasing continuous functions such that
	\begin{enumerate}
	\item there exist $\lambda_n\in [0,\infty)$ and $\lambda\in[0,\infty]$ such that $c_n$ and $c$ equal $0$  on $[0,\lambda_n]$ and $[0,\lambda]$, 
	\item there exist $\mu_n\in (\lambda_n,\infty)$ and $\mu\in (\lambda,\infty)$ such that $c_n$ and $c$ equal $1$  on $[\mu_n,\infty)$ 
	and  $[\mu,\infty)$, and
	\item $c_n$ and $c$ are strictly increasing on $[\lambda_n,\mu_n]$ and $[\lambda, \mu]$. 
\end{enumerate}If $c_n\to c$ uniformly on compact sets then, $f^n\circ c^n\to f\circ c$ on Skorohod space. 
\end{lemma}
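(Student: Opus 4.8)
The plan is to reduce the statement to known continuity properties of composition on Skorohod space, but the obstacle is that $f_n \circ c_n \to f \circ c$ in the Skorohod sense can fail at the ``endpoints'' $\lambda$ and $\mu$, where $c$ is constant and $f$ may jump; so I would isolate the behavior on the open interval $(\lambda,\mu)$ where $c$ is a genuine time change, and handle the two flanking plateaus separately by hand. Concretely, first I would observe that on $[0,\lambda_n]$ one has $f_n \circ c_n = f_n(0)\cdot\mathbf{1}$ (a constant), on $[0,\lambda]$ one has $f\circ c = f(0)\cdot\mathbf 1$; since $c_n\to c$ uniformly on compacts forces $\lambda_n\to\lambda$ (using that $c$ is strictly increasing immediately after $\lambda$, so it detaches from $0$), and since moreover $f_n(0)\to f(0)$ is not assumed — wait, it is not — I would instead note that what matters is $f_n\circ c_n$ on a neighborhood of any fixed $t$, and split into cases $t<\lambda$, $t\in(\lambda,\mu)$, $t>\mu$, plus the two boundary points. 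For $t>\mu$, both functions are $0$ on a neighborhood, trivially matching; for $t<\lambda$, both are eventually constant equal to $f_n(0)$ resp. $f(0)$ near $t$, and the contribution to the Skorohod distance is controlled on that initial block.

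The heart of the argument is the interval $(\lambda,\mu)$. There, $c$ is a strictly increasing continuous bijection onto $(0,1)$, hence has a continuous inverse, and likewise $c_n$ on $(\lambda_n,\mu_n)$. The key point is that $f\circ c$, being $f$ composed with a \emph{continuous strictly increasing} time change, rearranges the jump times of $f$ but creates no new jumps and destroys none: a jump of $f$ at $u\in(0,1)$ becomes a jump of $f\circ c$ at $c^{-1}(u)$, of the same size. So I would invoke the continuity theorem for the composition map $(f,c)\mapsto f\circ c$ on Skorohod space (from \cite{MR561155}, \cite{MR1876437}, \cite{MR2479479}) in the regime where $c_n \to c$ uniformly and $c$ is strictly increasing and continuous: under these hypotheses the composition is Skorohod-continuous. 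This gives $f_n\circ c_n \to f\circ c$ in the Skorohod topology restricted to any compact subinterval of $(\lambda,\mu)$.

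Finally I would glue the three pieces. Choose the Skorohod time-change homeomorphisms $\lambda_n^{\ast}\colon[0,1]\to[0,1]$ supplied by the composition-continuity theorem on the middle block, and extend them affinely on $[0,\lambda]$ (mapping $[0,\lambda_n]$ to $[0,\lambda]$) and on $[\mu,1]$ (mapping $[\mu_n,1]$ to $[\mu,1]$); since $\lambda_n\to\lambda$ and $\mu_n\to\mu$, these extensions converge uniformly to the identity, and $(f_n\circ c_n)\circ \lambda_n^\ast$ converges uniformly to $f\circ c$ on each of the three blocks — on the flanks because both sides are constant there (equal to $f_n(0)\to f(0)$ on the left block after the time change; here I would use that $f_n\to f$ in Skorohod together with continuity of $f$ from the right at $0$ to control $f_n(0)$, or more simply restrict attention to the statement's intended use where $f_n(0),f(0)$ are nonnegative and the left block contributes a vanishing amount to the metric as it has length $\lambda_n\to\lambda$), and on the middle by the previous paragraph. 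The main obstacle, and the step I would be most careful about, is precisely the matching at the two interfaces $t=\lambda$ and $t=\mu$: one must check that no ``extra'' jump of $f\circ c$ hides exactly at $\lambda$ or $\mu$ that the $f_n\circ c_n$ fail to track, which is why the strict monotonicity of $c$ on $[\lambda,\mu]$ in hypothesis (3) is essential — it guarantees $c$ pulls the support of the jumps of $f$ strictly inside $(\lambda,\mu)$.
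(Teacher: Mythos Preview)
Your three-block decomposition and the treatment of the middle block are essentially the paper's approach: on $[u,v]\subset(\lambda,\mu)$ the paper builds the time change $\beta^n=i\circ\alpha^n\circ c^n$ (equivalently your $c_n^{-1}\circ\beta_n\circ c$) from the Skorohod homeomorphisms $\alpha^n$ witnessing $f^n\to f$, exactly as you sketch. The paper's Remark even points to the abstract composition-continuity route you invoke.

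Where your plan diverges from the paper---and where there is a genuine gap---is the flanks. You assert that $c_n\to c$ forces $\lambda_n\to\lambda$ and $\mu_n\to\mu$, and then build the global time change by affine extension. But uniform convergence only gives $\limsup\lambda_n\le\lambda$ and $\liminf\mu_n\ge\mu$: nothing prevents $\lambda_n$ from being much smaller than $\lambda$ (with $c_n$ rising very slowly on $[\lambda_n,\lambda]$), nor $\mu_n$ from being much larger than $\mu$. On such an interval $c_n$ is strictly increasing while $c$ is constant, so $f_n\circ c_n$ is not constant there and your affine match on $[0,\lambda_n]\leftrightarrow[0,\lambda]$ does not cover it. Your final sentence about jumps being ``pulled strictly inside $(\lambda,\mu)$'' addresses a different worry and does not fix this. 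The case $\lambda=\infty$ (so $c\equiv 0$) is also left untreated.

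The paper avoids this by \emph{not} splitting at $\lambda,\mu$. Instead it picks $u=i(\varepsilon)$, $v=i(1-\varepsilon)$ strictly inside $(\lambda,\mu)$ and shows directly that $\|f^n\circ c^n-f\circ c\|$ is small on $[0,u]$ and on $[v,r]$, using only that $f$ is small near $0$ and near $1$ (continuity of $f$ at $0,1$, as noted in the paper's Remark). This sidesteps any need to know $\lambda_n,\mu_n$ precisely: whatever $c_n$ does on $[0,u]$, it stays in $[0,c_n(u)]$, which is close to $[0,\varepsilon]$, where $f_n$ is uniformly small. You gesture at this idea (``continuity of $f$ from the right at $0$'') but then discard it in favor of the affine patch; that is the step to reinstate.
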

\begin{remark}
	Lemma \ref{lemmaConvPopulationProfileDeterministic} could be proved as in Theorem 3 of \cite{MR3098685}, using Theorem 1.2 of \cite{MR2479479}. Indeed the conditions of the latter hold since $f$ is continuous at $0$ and $1$, which are the only possible discontinuities of 
	the right-continuous inverse of $c$. 
	For completeness, we include the following proof, adapted from the proof of Lemma 3 of \cite{kersting2011height}.
\end{remark}
\begin{proof}
	First suppose that $\lambda<\infty$ and let $i$ be the right-continuous inverse of $c$. 
	Let $0<u<v<r$ be continuity points of $f\circ c$. 
	Using Lemma 2.2 of \cite{MR561155}, convergence of $f^n\circ c^n$ on $[0,r]$ follows from its convergence on each of the subintervals $[0,u]$, $[u,v]$ and $[v,r]$. 
	
	The simplest is the middle interval whenever $0<i(\epsilon)=u<v= i(1-\epsilon)$ for some $\epsilon\in (0,1/2)$ where $\epsilon$ and $1-\epsilon$ are continuity points of $f$. 
	Indeed, the hypotheses imply that for $n$ large enough, $c^n$ and $c$ are strictly increasing on $[u,v]$. 
	If $f$ is any \cadlag\ function on $[0,v]$ and $u<v$, let $\|f\|_u=\sup_{s\leq u}\imf{f}{s}$ and $\|f\|_{[u,v]}=\sup_{u\leq s\leq v}\imf{f}{s}$
	If $\|f^n\circ \alpha^n-f\|_{[\epsilon,1-\epsilon]}\to 0$ where $(\alpha^n,n\in\na)$ are increasing homeomorphisms on $[\epsilon,1-\epsilon]$ converging uniformly to the identity, we can define $\beta^n=i\circ \alpha^n\circ c^n$ to obtain
	\[
		\|i\circ \alpha^n\circ c^n-\id \|_{[u,v]}\to 0
	\]because $i\circ \alpha^n\circ c^n\to i\circ c=\id$ on $[u,v]$.  
	Also, 
	\[
		\|f^n\circ c^n-f\circ c\circ \beta^n\|_{[u,v]}=\|f^n\circ c^n-f\circ \alpha^n\circ c^n\|_{[u,v]}=\|f^n -f\circ \alpha^n\|_{[c^n(u),c^n(v)]}
	\]The right-hand side goes to zero because $c^n(u)\to \epsilon$, $c^n(v)\to 1-\epsilon$, and both limits are continuity points of $f$.

	Now, we choose $v$ and $r$ such that $||f^n\circ c^n-f\circ c||_{[v,r]}$ is as small as we want. 
	Let $\tau^\downarrow(\epsilon)=\inf\{t:||f||_{[t,1]}<\epsilon\}$.
	For $\epsilon \in (0,1/2)$, set $\tilde v=\tau^{\downarrow}(\epsilon)$, $v=i(\tau^{\downarrow}(\epsilon))$ and choose any $r>v$. 
	Since $\|f\|_{[\tilde v,1]}\leq \epsilon$ then $\|f^n\|_{[\tilde v,1]}\leq 2\epsilon$ for $n$ large enough. 
	Then 
	\[
		\|f\circ c\|_{[v,r]}=\|f\|_{[c(v),c(r)]}\leq \|f\|_{[\tilde v,1]}\leq \epsilon\ \ \ \ \mbox{and}\ \ \ \ 		\|f^n\circ c^n\|_{[v,r]}\leq \|f^n\|_{[c^n(v),1]}.
	\]Also, since the interval $[c^n(v),1]$ converges to $[\tilde v,1]$ and $f$ is continuous at $\tilde v$ (by the lack of negative jumps), then $\|f^n\|_{[c^n(v),1]}\leq 3\epsilon$ for large enough $n$. 
	Hence $||f^n\circ c^n- f\circ c||_{[v,r]}\leq 4\epsilon$ for large enough $n$. 
	
	A similar argument proves that $||f^n\circ c^n-f\circ c||_u$ can be made smaller than $\epsilon$ for large enough $n$ by choosing $u$  adequately. 
	The latter works also when $\lambda= \infty$ (in which case $f\circ c^\infty\equiv 0$), but for any $u>0$.
\end{proof}

\subsection{Hitting times of the cumulative population process}
\label{subsection_HittingTimesOfCumulativePopulation}

From Proposition \ref{proposition_deterministicIVP}, 
recall that cumulative populations are continuous and strictly increasing except when taking the values zero or one. 
We will now see that the hitting times of points $x\in (0,1)$ are continuous at them. 

\begin{proposition}
\label{proposition_HittingTimesOfCumulativePopulation}
Let $\fun{c}{[0,\infty)}{[0,1]}$ be continuous, non-decreasing 
and strictly increasing at $t$ if $0<\imf{c}{t}<1$. 
Assume that $c(0)=0$ and $c(\infty)=1$. 
Fix $x\in (0,1)$ and define $h_x(c)=\inf\set{t\geq 0: \imf{c}{t}=x}$. 
If $c^n\to c$ on Skorohod space, then $h_x(c^n)\to h_x(c)$. 
\end{proposition}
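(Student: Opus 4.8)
The plan is to sandwich $h_x(c^n)$ between $t_x-\epsilon$ and $t_x+\epsilon$ for every small $\epsilon>0$ and all large $n$, where I abbreviate $t_x:=h_x(c)$. This rests on the fact that $c$ crosses the level $x$ strictly, together with the replacement of the Skorohod hypothesis by local uniform convergence.

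First I would record two preliminary observations. Since $c$ is continuous, $c^n\to c$ in the Skorohod topology is equivalent to $c^n\to c$ uniformly on compact subsets of $[0,\infty)$, as recalled in Section~\ref{sectionIntroProfileOfTGDS}. Next, since $c$ is continuous and non-decreasing with $c(0)=0<x<1=c(\infty)$, the time $t_x$ is finite and strictly positive and $c(t_x)=x$. Moreover $c$ cannot be constant equal to $x$ on any nondegenerate interval: at an interior point $t$ of such an interval one would have $0<c(t)=x<1$ and yet $c$ not strictly increasing at $t$, contradicting the hypothesis. Combined with monotonicity, this gives, for every $\epsilon\in(0,t_x)$,
\[
	c(t_x-\epsilon)<x<c(t_x+\epsilon).
\]

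Then I would treat the two inequalities separately. For the lower bound, fix such an $\epsilon$ and set $\delta:=x-c(t_x-\epsilon)>0$. By monotonicity $\sup_{[0,t_x-\epsilon]}c=c(t_x-\epsilon)=x-\delta$, so uniform convergence on $[0,t_x-\epsilon]$ yields $\sup_{[0,t_x-\epsilon]}c^n\le x-\delta/2<x$ for all large $n$; hence $c^n$ has not reached the level $x$ by time $t_x-\epsilon$, so $h_x(c^n)\ge t_x-\epsilon$, and letting $\epsilon\downarrow 0$ gives $\liminf_n h_x(c^n)\ge t_x$. For the upper bound, fix small $\epsilon>0$; uniform convergence on $[0,t_x+\epsilon]$ gives $c^n(t_x+\epsilon)\to c(t_x+\epsilon)>x$, so $c^n(t_x+\epsilon)>x$ for all large $n$, whence $h_x(c^n)\le t_x+\epsilon$ and $\limsup_n h_x(c^n)\le t_x$. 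Together these give $h_x(c^n)\to t_x=h_x(c)$.

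The one point needing care — and the only place I expect any (minor) friction — is the last implication, namely passing from ``$c^n$ exceeds $x$ by time $t_x+\epsilon$'' to ``$h_x(c^n)\le t_x+\epsilon$'', which requires $c^n$ to attain the \emph{exact} value $x$ rather than jumping over it. For the cumulative population processes this is automatic: in their interpolated (continuous) form one applies the intermediate value theorem on $[0,t_x+\epsilon]$ using $c^n(0)\to c(0)=0<x$; in any case one may read $h_x$ as $\inf\{t:c^n(t)\ge x\}$, which coincides with $\inf\{t:c^n(t)=x\}$ for continuous $c^n$ and for which the argument above is verbatim. Apart from this bookkeeping the proof is a routine $\epsilon$-argument built on local uniform convergence and the strict crossing of $c$ at level $x$; it can equivalently be phrased as the continuity of the generalized inverse of $c$ at the regular value $x$.
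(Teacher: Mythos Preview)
Your proof is correct and self-contained; the paper instead establishes the equivalent reformulation $h_x(c)=\inf\{t:c(t)\ge x\}=\inf\{t:c(t)>x\}$ and then invokes an external continuity result for first-passage times on Skorohod space (Lemma~8 in \cite{2019arXiv191009501M}). Your approach trades that citation for a direct $\epsilon$-sandwich built on the observation that Skorohod convergence to a continuous limit is locally uniform; this is more elementary and exposes exactly where the strict-increase hypothesis enters (the inequality $c(t_x+\epsilon)>x$). The paper's route is shorter on the page and works uniformly for \cadlag\ approximants without needing to discuss whether $c^n$ actually attains the value $x$, since it phrases everything via the hitting time of the open set $(x,\infty)$; your closing paragraph correctly flags this point and resolves it for the intended application where the $c^n$ are continuous.
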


\begin{proof}
Note that $h_c(x)$ is finite for any $x\in (0,1)$ as $c$ is continuous, $c(0)=0<x<1=c(\infty)$. 
Since $c$ is non-decreasing, we see that $h_c(x)=\inf\set{t\geq 0: c(t)\geq x}$ and $c(h_c(x))=x$. 
Since, by hypothesis, $c$ is strictly increasing at $h_c(x)$, we see that $h_c(x)=\inf\set{t\geq 0: c(t)>x}$. 
Hence, we can apply Lemma 8 in \cite{2019arXiv191009501M}, to see that $h_x$ is continuous at $c$ on Skorohod space. 
\end{proof}

\section{First results on exchangeable increment processes}
\label{section_EIProcesses}
The aim of this section is to establish the fundamental relationship 
between trees with a given degree sequence 
and exchangeable increment processes. 
With this, we will establish Proposition \ref{proposition_ConvergenceOfBFWs} 
and to prove the following preliminary results on exchangeable increment processes: 
subsequential limits of the cumulative profile of the trees in Theorem \ref{teoUnicidadEnFnalGralIntro} 
and the path transformation of Proposition \ref{propoIgualdadDistribucionalIntro}). 
A subsection is devoted to each of these topics. 

We will mainly deal with discrete time exchangeable increment processes. 
\begin{definition}
	Fix $s\in\na$. 
	A discrete time process $(W^b(j),0\leq j\leq s)$ 
	with increments $\Delta W^b(i)=W^b(i)-W^b(i-1)$ 
	has exchangeable increments (EI) if for every permutation $\sigma$ on $[s]$
	\[
		\paren{\Delta W^b(1),\ldots, \Delta W^b(s)}\stackrel{d}{=}\paren{\Delta W^b(\sigma_1),\ldots,\Delta W^b(\sigma_{s})}.
	\]
\end{definition}

Among discrete time EI processes, extremal ones play a fundamental role. 
Extremal EI processes are constructed in terms of a deterministic sequence $d_1,\ldots, d_s$ and a uniform permutation $\pi$ of $[s]$ by setting
\[
	\Delta W^b_i=d_{\pi_i}
\]In general, if $\sigma$ is a deterministic permutation of $[s]$ and $x^\sigma_i=d_{\sigma_1}+\cdots+d_{\sigma_i}$, we see that $W^b=x^\pi$. This expression separates the deterministic and random components $\set{x^\sigma}$ and $\pi$. 

Write $\theta_i(W^b)=W^{b,(i)}$ for the \emph{cyclic shift} of $W^b$ at $i$, that is, the sequence of length $s$ whose $j$th increment is $\Delta W^b(i+j)$ with $i+j$ interpreted mod $s$. 
A path transformation, introduced by Vervaat in \cite{MR515820}, is used to code discrete random trees from EI processes. The \emph{discrete Vervaat transform} of $W^b$, denoted by $V(W^b)$, is the $\rho$-th cyclic shift of $W^b$, where $\rho=\min\{i\in [{\bf s}]:W^b(i)=\min_{j\in [{\bf s}]}W(j) \}$ is the index of the first minimum of $W^b$.

The next proposition is an easy consequence of the definitions (cf. the proof of Lemma 7 in \cite{MR3188597}). 
It provides a construction of trees with a given degree sequence in terms of a random permutation. 

\begin{proposition}\label{propoBFWIsAEI}
Let ${\bf s}$ be a degree sequence with child sequence $c$ and size $s$. 
Let $W^b$ be an independent increment process constructed by uniformly permuting $c-1$ and let $\rho$ be the index of its first minimum. 
If $W$ is the breadth-first walk (or the depth-first walk) of a uniform tree with degree sequence ${\bf s}$ and $U$ is a uniform random variable on $[s]$ independent of $W$ then $(W,U)\stackrel{d}{=} (V(W^b),\rho)$.
\end{proposition}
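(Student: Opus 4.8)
The plan is to deduce the distributional identity of the proposition from the cycle lemma, the single structural fact at our disposal being that the increments of $c-1$ sum to $-1$: indeed $c$ has $N_i$ entries equal to $i$, so the entries of $c-1$ sum to $\sum_i(i-1)N_i=(s-1)-s=-1$.

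First I would set up the purely combinatorial picture. Let $M$ denote the multiset of values occurring in $c-1$, let $\mathcal A$ be the set of walks $w$ with $w(0)=1$ whose sequence of $s$ increments is an ordering of $M$ (so that $w(s)=0$ automatically), and let $\mathcal L\subseteq\mathcal A$ be the subset of $w$ with $w(k)\geq 1$ for $0\leq k\leq s-1$. Reading off the number of children of the $i$th vertex in depth-first order shows that the depth-first walk is a bijection from plane trees with degree sequence ${\bf s}$ onto $\mathcal L$; reading children counts in breadth-first order gives the analogous bijection for the breadth-first walk (this is the discrete Lamperti/queue construction, underlying the fact, recalled in the text, that breadth-first and depth-first walks of a uniform such tree have the same law, cf.\ the proof of \cite[Lemma 7]{MR3188597}). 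Hence, in either case, $W$ is uniformly distributed on $\mathcal L$; and $W^b$, being the walk obtained by applying a uniform random permutation to the coordinates of $c-1$, is uniform on $\mathcal A$ (every rearrangement of $c-1$ arises from the same number $\prod_iN_i!$ of permutations of $[s]$). It therefore suffices to show that $w\mapsto(V(w),\rho(w))$ is a bijection from $\mathcal A$ onto $\mathcal L\times[s]$: granting this, $(V(W^b),\rho)$ is uniform on $\mathcal L\times[s]$, which means precisely that $V(W^b)$ is uniform on $\mathcal L$ and $\rho$ is an independent uniform variable on $[s]$, i.e.\ $(V(W^b),\rho)\stackrel{d}{=}(W,U)$.

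To build this bijection I would exploit the action of $\mathbb Z/s\mathbb Z$ on $\mathcal A$ by the cyclic shifts $\theta_j$ of the increments. No element of $\mathcal A$ has a nontrivial period: a period $p\mid s$ with $p<s$ would force $\sum M$ to be a multiple of $s/p\geq 2$, contradicting $\sum M=-1$; hence every orbit consists of exactly $s$ distinct walks. By the cycle lemma, each orbit contains exactly one element $\ell$ of $\mathcal L$, and a direct computation then gives $\rho(\theta_j\ell)=s-j$ for $0\leq j\leq s-1$ (with $\rho(\ell)=\rho(\theta_0\ell)=s$), so that $V(\theta_j\ell)=\theta_{\rho(\theta_j\ell)}(\theta_j\ell)=\theta_s(\ell)=\ell$. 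Thus $V$ is constant on each orbit, equal to the unique element of $\mathcal L$ it contains, while $j\mapsto\rho(\theta_j\ell)$ is a bijection from $\{0,\ldots,s-1\}$ onto $[s]$; combining these with the fact that each orbit has $s$ distinct elements shows $w\mapsto(V(w),\rho(w))$ is the desired bijection, and the reduction of the previous paragraph completes the proof.

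There is no real obstacle here --- the content is the cycle lemma together with the remark that $\sum M=-1$ forbids periodicity --- so the work is essentially bookkeeping: harmonizing the ``started at $1$'' convention for all the walks involved, verifying that the discrete Vervaat transform (the $\rho$th cyclic shift, $\rho\in[s]$ being the index of the first minimum) does land in $\mathcal L$, which is exactly the identity $V(\theta_j\ell)=\ell$ above, and recalling that the breadth-first encoding of a plane tree with prescribed degree sequence is again a bijection onto $\mathcal L$, so that the statement holds verbatim with the breadth-first walk in place of the depth-first walk.
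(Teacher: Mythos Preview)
Your argument is correct and is exactly the approach the paper intends: the paper does not spell out a proof but defers to ``the proof of Lemma 7 in \cite{MR3188597}'', which is precisely this cycle-lemma bijection between $\mathcal A$ and $\mathcal L\times[s]$. Your bookkeeping is accurate, including the aperiodicity step (via $\sum M=-1$), the computation $\rho(\theta_j\ell)=s-j$, and the observation that both the depth-first and breadth-first encodings biject onto $\mathcal L$ so that the statement covers either walk.
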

Note, in particular, that the index $\rho$ of the first minimum of $W^b$ is uniform and independent of $V(W^b)$.
Also, both the DFW and the BFW have the same increments, but in a different order. They have, however, the same distribution. 

\subsection{Convergence of breadth-first walks}
\label{subsection_ConvergengeOfBFWs}

This subsection is devoted to the proof of Proposition \ref{proposition_ConvergenceOfBFWs}. 
Consider a sequence of degree sequences $({\bf s_n})_{n\geq 1}=(N^n_i,i\geq 0)_{n\geq 1}$
and define $s_n=|{\bf s_n}|$. 
Assume that the sequence $({\bf s_n})_{n\geq 1}$ satisfies the assumptions of Proposition \ref{proposition_ConvergenceOfBFWs}. 
Using a uniform permutation $\pi_n$ on $[s_n]$ and a child sequence $(d^n(j),j\in [s_n])$, construct the process
\[
	W^b_{n}(t)=\frac{1}{b_n}+\frac{1}{b_n}\sum_{j\leq \floor{s_n t}}(d^n(\pi_n(j))-1), \ \ t\in [0,1],
\]where $b$ stands for \emph{bridge}. 
It has exchangeable increments on $\set{k/s_n: 0\leq k\leq s_n}$. 

First, we analyze the convergence of $W^b_{n}$ to the EI process $X^b$ given in \eqref{eqnEIP}.
Define $\xi_{ j}=(d^n(\pi_n(j))-1)/b_{s_n}$. 
	By Theorem 2.2 of \cite{MR0394842}, we know that
	\[
		W^b_n
		\stackrel{d}{\to} X^b \text{ if and only if }	\paren{\sum \xi_{j},\sum \xi^2_{j},\xi_{1},\xi_{2},\ldots}\stackrel{d}{\to} 	\paren{\alpha,\sigma^2+\sum \beta^2_j,\beta_1,\beta_2,\ldots}. 
	\]

	Since $\sum_i \xi_i=-1/b_{n}$, 
	by definition of a degree sequence, 
	we see that the limit $\alpha$ exists and equals $0$. 
	For the sum of the squares, using the definition of a child sequence as well as hypothesis \defin{degree variance}, we get
	\[
		\sum \xi^2_{j}
		=\frac{1}{b_{n}^2 } \sum
		(j-1)^2N^n_j\to \sigma^2+\sum\beta_j^2.
	\]Finally, hypothesis \defin{hubs} gives us the convergence of $\tilde{\xi}_{j}\to \beta_j$.

\begin{remark}\label{rmrkUnknownOrderOfVariance}
	We can substitute the hypotheses of Proposition \ref{proposition_ConvergenceOfBFWs} to 
	$s_n\to\infty$, $b_n=\sqrt{s_n}$, $\max\set{i: N^n_i>0}=o (b_{n})$ and $\sum(j-1)^2N^n_j/b_{n}^2\to \sigma^2\in (0,\infty)$.
	In this case, we obtain in the limit the Brownian bridge on $[0,1]$, as in \cite{MR3188597}.
\end{remark}

We now prove convergence of the scaled breadth-first walks $X_n$. 
Note that $X_n$ has the same law as the Vervaat transform of $W_n^b$. 
By the proof of Lemma 6 of \cite{MR1825153}, 
the process $X^b$ achieves its infimum in a unique time and continuously. 
The latter reference deals only with EI processes with no-negative jumps; 
the same conclusion is found in Theorem 2 of \cite{2019arXiv190304745A} for more general EI processes. 
We can therefore define the Vervaat transform of $X^b$ and note that this process is strictly positive on $(0,1)$. 
Hence, by Lemma 3 of \cite{MR1825153} (or Lemma 14 of \cite{kersting2011height})
	the Vervaat transform $X^n$ of the rescaled bridges $W_{n}^b$ converges to $X$, the Vervaat transform of $X^b$. This finishes the proof of Proposition \ref{proposition_ConvergenceOfBFWs}. 

\subsection{Subsequential compactness of profiles}
\label{subsection_SequentialCompactnessOfProfiles}

In this subsection, we give a simple argument to show that the rescaled cumulative populations $(C^n)$ (extended to $[0,\infty)$ by linear interpolation) admit subsequential (weak) limits. 
Since each $C^n$ is a continuous function, we will use the weak sequential compactness (also called tightness) criteria of \cite[Ch. 2\S 7]{MR1700749}. 
This is a step in the proof of Theorem \ref{teoUnicidadEnFnalGralIntro}, whose proof then amounts to showing uniqueness of subsequential weak limits.

\begin{proposition}\label{propoConvergenciaSubsucPerfil}
	Under the assumptions of Theorem \ref{teoUnicidadEnFnalGralIntro}, 
	let $(X^n, n\geq 1 )$ be the rescaled breadth-first walks of trees $(\tau_n,n\geq 1)$ with law $\p_{{\bf s_n}}$. 
	Let $X$ be the limit of $(X^n,n\in\na)$. 
	Consider the rescaled cumulative profile $C^n$ of $\tau_n$. 
	Then, $(C^n,n\geq 1)$ is tight, 
	and every weakly subsequential limit of $((X^n,C^n),n\geq 1)$ is of the form $\paren{X,C}$ 
	where $C$ satisfies $C_t=\int_0^t X\circ C_s\, ds $. 
\end{proposition}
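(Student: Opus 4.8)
The plan is to first establish tightness of $(C^n)$ via a uniform modulus-of-continuity bound, and then to identify every subsequential limit using the discrete Lamperti recursion \eqref{equation_DiscreteLampertiTransform} together with the assumed convergence $X^n \to X$. For tightness, I would work with the discrete relation $Z^n(k+1) = X^n(C^n(k))$ and its continuous rescaling. Since $X^n$ converges weakly in Skorohod space to the (continuous, bounded) excursion $X$, by Skorohod's representation theorem I may assume the convergence is almost sure and uniform. In particular $\sup_n \|X^n\|_\infty =: M_n$ is almost surely bounded (converging to $\|X\|_\infty$), which gives a uniform bound on the increments of $C^n$: from $C^n_t - C^n_s = \int_s^t Z^n(r)\,dr$ (after interpolation) and $Z^n(r) = X^n(C^n(\lfloor \cdot \rfloor))$, one gets $|C^n_t - C^n_s| \le M_n |t-s|$. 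Thus each $C^n$ is Lipschitz with a common (random, tight) constant, the family $(C^n)$ is equicontinuous, and since $C^n$ takes values in $[0,1]$ it is pointwise bounded. By the Arzel\`a--Ascoli-type tightness criterion of \cite[Ch.~2\S7]{MR1700749}, $(C^n)$ is tight in $C([0,\infty))$; joint tightness of $((X^n,C^n))$ follows since both coordinates are tight.

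Next I would identify the subsequential limits. Fix a subsequence along which $(X^n, C^n) \stackrel{d}{\to} (X, C)$ for some limiting pair; by Skorohod representation, assume this holds almost surely, with $X^n \to X$ uniformly and $C^n \to C$ uniformly on compacts (the latter using that the limit $C$ is continuous, being a locally uniform limit of Lipschitz functions with a common constant, hence itself Lipschitz). It remains to pass to the limit in the integrated relation
\[
	C^n_t = \tfrac{1}{s_n} + \int_0^t X^n\!\left( C^n(\tfrac{b_n}{s_n}\lfloor \tfrac{s_n}{b_n} r\rfloor) \right) dr,
\]
which is the rescaling of \eqref{equation_DiscreteLampertiTransform}. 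The discretization argument already sketched in Section \ref{section_DeterministicResultsOnLampertiTransformation} applies: since $b_n/s_n \to 0$ is the discretization span, the step function $r \mapsto \tfrac{b_n}{s_n}\lfloor \tfrac{s_n}{b_n} r\rfloor$ converges uniformly to the identity, and combined with $C^n \to C$ uniformly and $X^n \to X$ uniformly (with $X$ continuous), the integrand converges pointwise to $X\circ C_r$ and is uniformly bounded, so by dominated convergence $C_t = \int_0^t X\circ C_s\,ds$. The $1/s_n$ term vanishes. This shows every subsequential limit is of the claimed form.

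The main obstacle is the interchange of limits in the composition $X^n\circ C^n$: $X^n$ need not be continuous (it is a rescaled lattice walk, constant on intervals $[i,i+1)$ before scaling), so $X^n \to X$ only in the Skorohod sense, not uniformly, and naively composing a Skorohod-convergent sequence with a convergent inner function is delicate. The resolution is exactly that the limit $X$ is continuous, so Skorohod convergence $X^n \to X$ upgrades to \emph{uniform} convergence on $[0,1]$ (as recalled in the excerpt), and $C^n$ takes values in $[0,1]$; hence $\sup_{u\in[0,1]} |X^n(u) - X(u)| \to 0$, which together with uniform convergence of the inner functions and uniform continuity of $X$ controls $\|X^n\circ C^n - X\circ C\|$ on compacts. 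One should be slightly careful that $C^n$ as actually defined is the linear interpolation of $\tilde C^n(s_n\cdot/b_n)/s_n$ at lattice points, but the difference between this interpolation and the genuine integrated step version above is $O(1/s_n)$ uniformly, hence negligible. With these observations the passage to the limit is routine; no uniqueness of the limiting $C$ is claimed here — that is the content of Theorem \ref{teoUnicidadEnFnalGralIntro}, proved later by showing the shift parameter $\Lambda$ is zero.
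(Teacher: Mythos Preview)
Your tightness argument is correct and matches the paper's: the Lipschitz bound $|C^n_t-C^n_s|\le \|X^n\|\,|t-s|$ together with tightness of $\|X^n\|$ gives equicontinuity, hence tightness of $(C^n)$.

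The identification step, however, contains a genuine gap. You assert that the limit $X$ is continuous, and then rely on this to upgrade Skorohod convergence $X^n\to X$ to uniform convergence, which is what makes your dominated-convergence argument for $X^n\circ C^n\to X\circ C$ go through. But under the hypotheses of Theorem~\ref{teoUnicidadEnFnalGralIntro}, $X$ is the Vervaat transform of an EI process with parameters $(0,\sigma,\beta)$ where the jump sequence $\beta$ may well be nontrivial (e.g.\ the normalized stable excursion of index $\alpha\in(1,2)$, or the examples in Section~\ref{section_ExampleSection} with $\sigma=0$ and infinitely many $\beta_i>0$). In those cases $X$ is genuinely discontinuous, Skorohod convergence does \emph{not} upgrade to uniform convergence, and the composition $X^n\circ C^n$ need not converge pointwise to $X\circ C$.

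The paper's proof handles exactly this difficulty. Instead of claiming pointwise convergence of the composition, it uses the Skorohod homeomorphisms $\alpha^{n_l}$ (so that $\|X^{n_l}-X\circ\alpha^{n_l}\|\to 0$), writes
\[
X^{n_l}\circ C^{n_l}=\bigl(X^{n_l}\circ C^{n_l}-X\circ\alpha^{n_l}\circ C^{n_l}\bigr)+X\circ\alpha^{n_l}\circ C^{n_l},
\]
shows $\alpha^{n_l}\circ C^{n_l}\to D$ uniformly, and then exploits the one-sidedness of the jumps of $X$ (all nonnegative) to sandwich
\[
X_-\circ D\le \liminf_l X^{n_l}\circ C^{n_l}\le \limsup_l X^{n_l}\circ C^{n_l}\le X\circ D.
\]
Fatou's lemma then yields only the differential \emph{inequality} \eqref{eqnDesgFnal}, not directly the equation. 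The final step---upgrading to the equation $C_t=\int_0^t X\circ C_s\,ds$---is not automatic either: it relies on the analysis in the proof of Proposition~\ref{proposition_deterministicIVP}, which shows that any solution of the inequality is strictly increasing on $(0,1)$, hence meets jump times of $X$ only on a null set, so that $X_-\circ C=X\circ C$ almost everywhere. Your proposal bypasses both of these issues by an assumption on $X$ that is simply false in the generality claimed.
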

\begin{proof}
	We prove tightness of $(C^n,n\in \na)$, which together with the tightness of $(X^n,n\in\na)$, 
	implies tightness of $((X^n,C^n),n\in\na)$. 
	Recall that $0\leq C^n\leq 1$, so the sequence $(C^n,n\in\na)$ is uniformly bounded.	
	Note that, for $a_n=s_n/b_n$, we have
	\[
		0\leq  D_+C^n(s)= X^n\circ C^n(\floor{a_n s}/a_n)\leq \|X^n\|:=\sup_{s\in [0,1]}\abs{X^n_s}
	\]and so the modulus of continuity\[
		\imf{\omega_n}{\delta}=\sup\set{|C^n(t)-C^n(s)|: |t-s|\leq \delta}
	\]of $C^n$ satisfies $\imf{\omega_n}{\delta}\leq  \|X^n\|\delta$. 
	Therefore
	\[
		\proba{\imf{\omega_n}{\delta}>\epsilon}\leq 
		\proba{\|X^n\|> \epsilon/\delta}. 
	\]Using Theorem 13.2 in \cite{MR1700749},
	\[
		\lim_{\delta\to 0}\sup_{n} \proba{\|X^n\|> \epsilon/\delta}.
	\]Hence $((X^n,C^n),n\in\na)$ is tight by Theorem II.7.2 of \cite{MR1700749}. 
	
	Suppose that $(X,D)$ is the limit of $((X^{n_l}, C^{n_l}),l\in\na)$. 
	By Skorohod's theorem (see for example \cite[Ch 1.\S 6]{MR1700749}), we assume the convergence takes place almost surely. 
	Suppose we have proved that for any $T>0$
	\begin{equation}\label{eqnDesigualdadesX_yX}
	X_-\circ D\leq \liminf_l\,X^{n_l}\circ C^{n_l}\leq \limsup_l\,X^{n_l}\circ C^{n_l}\leq X\circ D \ \ \ \ \ \mbox{on}\ [0,T].
	\end{equation}Then, using Fatou's lemma, for $0\leq s<t\leq T$
	\begin{equation}\label{eqnContinuidadC}
	\int_s^t X_-\circ D(r)\,dr\leq D(t)-D(s)\leq \int_s^t X\circ D(r)\,dr,
	\end{equation}and therefore, by the proof of Proposition \ref{proposition_deterministicIVP}, we see that $D=C^\Lambda$ for some $\Lambda\in [0,\infty]$. 
	
	It remains to prove \eqref{eqnDesigualdadesX_yX}. 
	We start with the equality
	\[
		X^{n_l}\circ C^{n_l}=\paren{X^{n_l}\circ C^{n_l}-X\circ \alpha^{n_l}\circ C^{n_l}}+X\circ \alpha^{n_l}\circ C^{n_l}.
	\]The difference in parenthesis is bounded above by $||X^{n_l}-X\circ \alpha^{n_l}||$, which goes to zero.
	The convergence of $\alpha^{n_l}\circ C^{n_l}\to D$ follows by adding the terms $\pm C^{n_l}$:
	\begin{equation}\label{eqnLambdanCn}
	\sup_{u\leq v}|\alpha^{n_l}\circ C^{n_l}(\floor{a_n u}/a_n)-D(u)|\leq ||\alpha^{n_l}-\id||+\sup_{u\leq v}|C^{n_l}(\floor{a_n u}/a_n)-D(u)|\to 0.
	\end{equation}Then, because $X$ is \cadlag\ and has only positive jumps
	\[
		X_-\circ D\leq \liminf_l\,X\circ \alpha^{n_l}\circ C^{n_l}\leq  \limsup_l\,X\circ \alpha^{n_l}\circ C^{n_l}\leq X\circ D.\qedhere
	\]
\end{proof}

As in Subsection \ref{subsectionSolutionsToODE}, 
we note that since the process $X^a=X_{a+\cdot}$ is positive until absorbed at zero at time $1-a$, 
there exists a unique solution $C^a$ to the ODE $D_+ C=X^a\circ C$. 
Because of uniqueness, the above tightness result actually proves weak convergence. 
Note that for $0<x<1$, if $\Lambda_x=\inf\set{t\geq 0: C^a_t=x}$ then
\[
	\Lambda_{b-a}(C^a)=\int_a^b\frac{1}{X_s}\, ds; 
\]hence, even if $\Lambda_a(X^l)\to \infty$ along a subsequence, 
Proposition \ref{proposition_HittingTimesOfCumulativePopulation} tells us that $\Lambda_b(X^l)-\Lambda_a(X^l)$ converges to the finite quantity $\Lambda_{b-a}(C^a)$. 
This will be relevant to the proof of Theorem \ref{teoUnicidadEnFnalGralIntro}. 

\subsection{The 213 transformation}
\label{subsection_213Transformation}

This subsection is devoted to the proof of invariance under the 213 transformation stated as Proposition \ref{propoIgualdadDistribucionalIntro}. 
The aforementioned transformation of a tree is easy to formalize using the associated depth-first walk. 

Let $s\in \na$ and $x_i\in \na\cup \{-1\}$ for $i\in[s]$ such that $\sum_1^{s}x_i=-1$.
Consider the discrete time excursion $w$ 
constructed by applying the Vervaat transformation to the partial sums of $x$, 
starting at zero and non-negative up to time $s$, where $w_{s}=-1$. 
Let $\tau$ be the tree whose depth-first walk is $w$. 
We now define the \emph{213 transformation} of $\tau$, 
denoted $\Psi_{u,v}(\tau)$. 
Let $\Delta w_j:=w_j-w_{j-1}$ be the $j$th increment of $w$ (number of children of $j$). 
Choose a vertex $v\in\{2,\ldots, s\}$ in the tree which is neither the root nor a leaf, and let
\[
	d_v=\inf\{j\in \{1,\ldots, s-v+1\}:w_{v-1+j}-w_{v-1}=-1 \}
\]be the length of the excursion starting at $v$ (this is equal to  $|T(v)|+1$). 
Notice that a vertex $v$ is a leaf iff $d_v=1$. 
Consider any vertex $v\in\{v+1,\ldots, v-1+d_v \}$, implying that $v$ is its ancestor. 

The tree $\Psi_{u,v}(\tau)$ is the plane tree whose depth-first walk is $\Psi_{u,v}(w)$ defined as follows:
\begin{displaymath}
\Delta\Psi_{u,v}(w)_j=\left\{
\begin{array}{lllll}
\textcolor{blue}{\Delta w_{v-1+j}} & \textrm{if } 1\leq j \leq u-v\\
\Delta w_{j-(u-v)} & \textrm{if } u-v+1\leq j \leq u-1\\
\textcolor{red}{\Delta w_{u+j-u}} & \textrm{if } u\leq j\leq u-1+d_u\\
\Delta w_{v+d_v+j-(u+d_u)} & \textrm{if } u+d_u\leq j \leq s+u+d_u-v-d_v\\
\textcolor{blue}{\Delta w_{u+d_u+j-(s+u+d_u-v-d_v+1)}} & \textrm{if } s+u+d_u-v-d_v+1\leq j \leq s.
\end{array} \right.
\end{displaymath}Figure \ref{figTransformacion2143} shows the 213 transformation of a depth-first walk.

\begin{center}
	\begin{figure}
		\includegraphics[width=.47\textwidth]{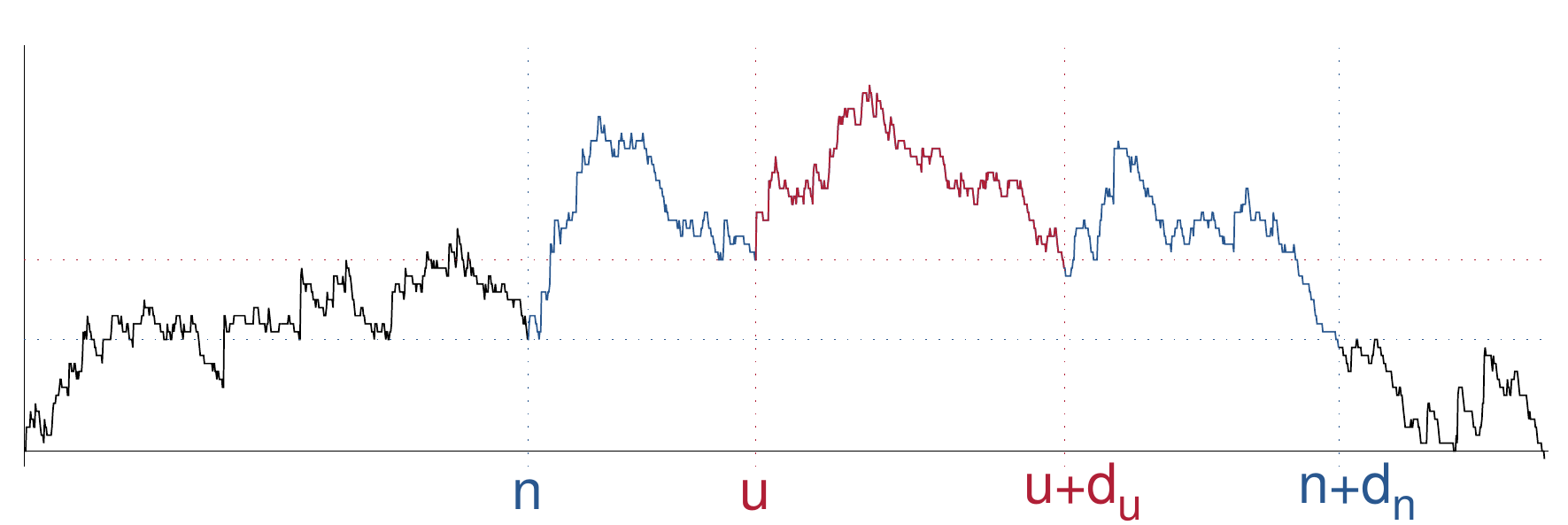}
		\hfill
		\includegraphics[width=.47\textwidth]{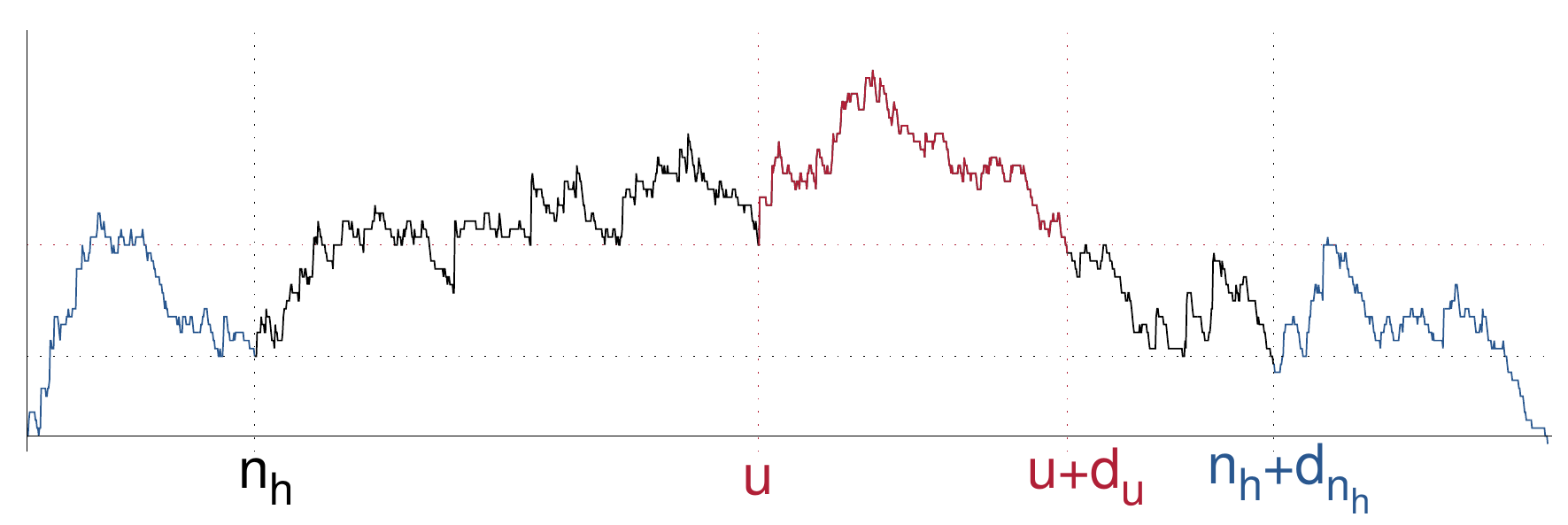}
		\caption{\footnotesize Recall the notation of the subtrees obtained after cutting at $n$ and $u$ the tree $T_n$, as in Definition \ref{defi213Transformation}. In the top excursion, the subtree $T_n(1,n)$ is represented in black, $T_n(n,u)$ in blue and $T_n(u)$ in red. The bottom excursion is $T_n$ after applying the 213 transformation. The root of the grafted subtree $T_n(1,n)$ will have label $n_h=u-n+1$ in $\tilde{T}_n$. }
		\label{figTransformacion2143}
	\end{figure}
\end{center}

Using a uniform law on the vertex $u$ and choosing $v$ in an adequate way, 
we prove the invariance of the law $\p_{{\bf s}}$ under a 213-type transformation.

Define the space of all excursions coding a tree with given degree sequence ${\bf s}$ by
\[
	\mathcal{E}_{{\bf s}}=\{w:w\mbox{ is an excursion with degree sequence } {\bf s} \}.
\]

Given an excursion $w\in \mathcal{E}_{{\bf s}}$, the natural numbers $u\in[s]\setminus \{1\}$ and $h\geq 1$, we construct $\tilde{w}$ as follows. 
In the tree generated by $w$, if $|u|$ denotes the height of the vertex $u$ and 
\begin{equation}\label{eqnCondiciones2143}
|u|>h,
\end{equation}let $v$ be the ancestor of $u$ at distance $h$ from $u$, 
and define $\tilde{w}=\Psi_{u,v}(w)$. If condition \eqref{eqnCondiciones2143} is not satisfied, define $\tilde{w}=w$.

\begin{lemma}\label{lemmaTransf2143EsBiyeccionn}
	For fixed $h\geq 1$ and $u\in\{2,\ldots, s \}$, the transformation $\Phi_{h,u}:{\mathcal{E}_{{\bf s}}}\to{\mathcal{E}_{{\bf s}}}$  sending $w$ to $\tilde w$ 
	is bijective. 
\end{lemma}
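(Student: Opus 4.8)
The plan is to exhibit an explicit two-sided inverse for $\Phi_{h,u}$, thereby proving bijectivity directly rather than via counting (the source and target sets have the same cardinality, but it is cleaner to invert). First I would record the effect of $\Phi_{h,u}$ on the underlying tree in structural terms: starting from $\tau$ with depth-first walk $w$, if the vertex $u$ (in depth-first order) has height $>h$, let $v$ be its ancestor at distance $h$; cutting at $v$ and $u$ splits $\tau$ into $\tau(1,v)$ (containing the root), $\tau(v,u)$ (rooted at $v$), and $\tau(u)$ (rooted at $u$), and $\Phi_{h,u}$ produces $\tilde\tau$ by grafting $\tau(u)$ at the position $v$ occupied and then grafting the resulting tree at the (former) leaf slot of $v$ inside $\tau(v,u)$. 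The key combinatorial observation is that in $\tilde\tau$ the image of $v$ — call it $\tilde v$ — is again a vertex which is neither root nor leaf, its height is $h$ plus the height of $u$'s new position, and crucially \emph{the label of $\tilde v$ in depth-first order is determined by $h$ and $u$ alone} (it is $n_h = u - (\text{first label of }\tau(v,u)) + 1$ in the notation of Figure \ref{figTransformacion2143}; more to the point, it can be read off $\tilde w$ without knowing $w$). This is exactly what makes the inverse canonical.

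Next I would define the candidate inverse $\Phi'_{h,u}$ on $\mathcal{E}_{\bf s}$ as follows: given $\tilde w$ coding $\tilde\tau$, locate the vertex $\tilde v$ at the prescribed label; if $\tilde v$ has height $>h$ (equivalently, if the original $\Phi_{h,u}$ acted nontrivially), let $\tilde u$ be the ancestor of $\tilde v$ at distance $h$, and apply the analogous cut-and-regraft $\Psi$ but swapping the roles of the two cut points (a "231"-type undo of the "213" move). If $\tilde v$ has height $\le h$, set $\Phi'_{h,u}(\tilde w)=\tilde w$. Then I would check $\Phi'_{h,u}\circ\Phi_{h,u}=\mathrm{id}$ and $\Phi_{h,u}\circ\Phi'_{h,u}=\mathrm{id}$ by tracking the three subtrees through both operations: the three blocks $\tau(1,v)$, $\tau(v,u)$, $\tau(u)$ are merely permuted and re-concatenated (with heights shifted in a prescribed, reversible way), so composing the forward and backward moves returns each block to its original slot at its original height. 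This is most efficiently verified on the level of the increment-sequence formula displayed just before the lemma: each of the five pieces of $\Delta\Psi_{u,v}(w)$ is a shift of a contiguous block of $\Delta w$, and one simply composes the two piecewise index-substitutions and observes they cancel. Finally I would separately handle the degenerate branch (condition \eqref{eqnCondiciones2143} failing), where both maps are the identity and there is nothing to prove, and note that the two branches are preserved under the maps because "does $\Phi_{h,u}$ act nontrivially" is detected by the height of the canonically-located vertex, a property intrinsic to the excursion.

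The main obstacle I anticipate is bookkeeping the depth-first labels: one must verify that after the forward move the distinguished vertex $\tilde v$ really does sit at the label $n_h$ that is computable purely from $(h,u)$ and from $\tilde w$ itself, and symmetrically that after the backward move the distinguished vertex returns to label $u$. This requires careful accounting of how the lengths $d_v=|\tau(v)|+1$ and $d_u=|\tau(u)|+1$ — and the sizes of the three blocks — rearrange under concatenation, and checking that the excursion stays nonnegative until its final step (so that $\Psi_{u,v}(w)$ is again a valid element of $\mathcal{E}_{\bf s}$ with the same degree sequence, which is immediate since we only permute increments). Once the label-tracking identity is in place, the cancellation of the two piecewise substitutions is routine, and injectivity plus surjectivity follow at once from having a genuine two-sided inverse. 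I would also remark that this lemma is precisely the deterministic skeleton underlying Proposition \ref{propoIgualdadDistribucionalIntro}: averaging the bijection $\Phi_{h,u}$ over a uniform choice of $u$ transports the uniform law on $\mathcal{E}_{\bf s}$ to itself, which is the distributional invariance we need.
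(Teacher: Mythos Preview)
Your strategy --- build an explicit inverse and check both compositions by tracking the three blocks through the piecewise index-substitutions --- is sound and would succeed, and it is in the same spirit as the paper's argument. The paper, however, streamlines in two ways that you should absorb.

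First, since $\mathcal{E}_{\bf s}$ is finite and $\Phi_{h,u}$ maps it to itself, it suffices to exhibit a preimage for every $\tilde w$; there is no need to verify the other composition.

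Second, and more importantly, you are anchoring the inverse on the wrong vertex. The crucial observation --- visible directly from the displayed formula for $\Delta\Psi_{u,v}(w)_j$ --- is that the block $[u,u-1+d_u]$ is left in place: the vertex with depth-first label $u$ keeps that same label in $\tilde w$, and its height in $\tilde\tau$ equals its height in $\tau$ (so condition~\eqref{eqnCondiciones2143} holds for $w$ iff it holds for $\tilde w$). Consequently there is no need to locate any auxiliary vertex $\tilde v$: given $\tilde w$, simply look at the vertex with label $u$, find its ancestor at distance $h$ in $\tilde\tau$, and apply the corresponding $\Psi$ to obtain a preimage. Your proposal instead tries to track the image of $v$, claims its label is $n_h=u-(\text{first label of }\tau(v,u))+1$, and asserts this is readable from $\tilde w$ alone; but $n_h$ in the paper is the new label of the \emph{old root}, not of $v$ (indeed the old $v$ becomes the root of $\tilde\tau$, so your ``$\tilde v$ is neither root nor leaf'' is not quite right). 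This mislabelling is not fatal --- the block-composition verification you outline would still go through once the bookkeeping is corrected --- but it obscures the one-line insight that makes the paper's proof immediate.
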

\begin{proof}
	It is easy to prove that $u$ satisfies \eqref{eqnCondiciones2143} for $w$ iff $u$ satisfies \eqref{eqnCondiciones2143} for $\tilde{w}$. 
	Being a transformation of a finite set to itself, 
	it suffices to prove it is onto.
	Consider $\tilde{w}\in \mathcal{E}_{{\bf s}}$.
	If $u$ does not satisfy \eqref{eqnCondiciones2143} for $\tilde{w}$, define $w:=\tilde{w}$.
	If $u$ satisfies \eqref{eqnCondiciones2143} for $\tilde{w}$, choose the ancestor $v_h=u-n+1$ of $u$ at height $h$ from $u$. 
	Define in this case $w=\Psi_{u,u-v+1}(\tilde{w})$. 
	In other words, 
	\[
		\tilde{w}=\Psi_{u,v}(\Psi_{u,u-v+1}(\tilde{w})).\qedhere
	\]
\end{proof}

We now prove the aforementioned equality in distribution.

\begin{proof}[Proof of Proposition \ref{propoIgualdadDistribucionalIntro}]
	Consider any excursion $w\in \mathcal{E}_{{\bf s}}$ and $u\in \{2,\ldots,s \}$.
	Using the bijection $\Phi_{h,u}$ of Lemma \ref{lemmaTransf2143EsBiyeccionn} and the independence between the tree $\Theta\sim \p_{{\bf s}}$ and $U$
	\[
		\proba{\tilde{W}=w,U=u} = \proba{W=\Phi_{h,u}^{-1}(w),U=u}
		= \proba{W=w,U=u},
	\]using that $W$ is the (uniform) excursion of the tree $\Theta$. 
\end{proof}

The 213 transformation was naturally defined on trees and has a simple definition as a path-transformation of its depth-first walks. This is possible since the subtree rooted above a vertex is encoded by a contiguous excursion of its depth-first walk. The transformation can also be seen in terms of the breadth-first walk although it does not have such a simple visualization. 
%

\section{Asymptotic thickness and uniqueness of subsequential limits}
\label{section_AsymptoticThickness}

In this section, we prove the main technical result 
responsible for the validity of Theorem \ref{teoUnicidadEnFnalGralIntro} and therefore finish its proof. 
Namely, that our random trees have an asymptotically thick base. 
We assume the hypotheses of Theorem \ref{teoUnicidadEnFnalGralIntro}. 
Since, by Proposition \ref{propoConvergenciaSubsucPerfil} 
and Lemma \ref{lemmaConvPopulationProfileDeterministic}, 
the sequence of (scaled) breadth-first walk, profile and cumulative profile satisfy is tight and 
all its subsequential limits $(X,Z,C)$ 
satisfy $D_+C=Z=X\circ C$, 
Proposition \ref{proposition_deterministicIVP} gives us the existence 
of an initial inverval of constancy of size $\Lambda=\inf\set{t\geq 0: C_t>0}$, 
such that $(Z,C)=(Z^\Lambda,C^\Lambda)$. 
When $\int_{0+}1/X_s\, ds=\infty$, there is a unique trivial solution to the ODE, 
which implies that all subsequential limits agree and that therefore there is convergence in this case. 
Hence, we now focus on the case $\int_{0+}1/X_s\, ds<\infty$. 
Having an asymptotically thick base 
is the assertion that $\Lambda=0$, in which case $(Z,C)$ is the Lamperti pair of $X$. 
Hence, subsequential limits are unique and this establishes the weak convergence asserted in Theorem \ref{teoUnicidadEnFnalGralIntro}. More specifically, the objective of this section is to prove the following result. 

\begin{theorem}
\label{teoLambdaIsZeroOrInfinity}
	Assume the hypotheses of Theorem \ref{teoUnicidadEnFnalGralIntro} are satisfied and that $\int_{0+}1/X_s\, ds<\infty$. 
	Let $C^\Lambda$ be any subsequential limit of $(C^n,n\in\na)$, and $I^\Lambda(\cdot)=\Lambda+\int_0^\cdot ds/X(s)$ its right-continuous inverse.
	Then 
	\begin{equation}\label{eqnLambda0HasProbabilityZero1}
	\proba{\Lambda\in (0,\infty]
	}=0. 
	\end{equation}
\end{theorem}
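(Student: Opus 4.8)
The plan is to argue by contradiction using the 213 transformation of Proposition \ref{propoIgualdadDistribucionalIntro}. Suppose that $\proba{\Lambda>\lambda}>0$ for some $\lambda>0$ (the case $\Lambda=\infty$ is covered by $\Lambda>\lambda$ for every $\lambda$, and would in fact be even easier to contradict). Fix a subsequence along which $(X^n,Z^n,C^n)$ converges to $(X,Z^\Lambda,C^\Lambda)$ and, by Skorohod's representation, assume the convergence is almost sure. The event $\{\Lambda>\lambda\}$ means $C^\Lambda\equiv 0$ on $[0,\lambda]$, i.e.\ the rescaled cumulative profile is still negligible at time $\lambda$; equivalently, a positive proportion of the mass of $\tau_n$ (asymptotically, the proportion $1-C^\Lambda(\lambda+\cdot)$ swept out \emph{after} time $\lambda$) sits above a level which, in the rescaled tree, has height $>\lambda$ but still carries $o(s_n)$ vertices in generations $0,1,\dots,\lfloor \lambda s_n/b_n\rfloor$. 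Choose $\lambda_1<\lambda/2$; I would first record, from the strict monotonicity of $C^\Lambda$ after $\Lambda$ (Proposition \ref{proposition_deterministicIVP}) together with the hitting-time continuity of Proposition \ref{proposition_HittingTimesOfCumulativePopulation}, that on this event there is a ``giant subtree'' $T_n(w)$ hanging from some vertex $w$ of $\tau_n$ of height $\le \lambda_1 s_n/b_n$ which by itself contains at least a fixed positive proportion $\delta s_n$ of the vertices — this uses that the post-$\Lambda$ mass is asymptotically concentrated just above the plateau, so a single ancestor low in the tree dominates it.

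Next I would feed this into Proposition \ref{propoIgualdadDistribucionalIntro} with parameter $h=\lfloor \lambda_1 s_n/b_n\rfloor$: pick the independent uniform vertex $V$ on $\{2,\dots,s_n\}$, and condition on the (non-vanishing probability) event that $V$ lands inside the giant subtree $T_n(w)$ and has height exceeding $2h$ so that its ancestor $U$ at distance $h$ is itself inside the giant subtree (this has probability bounded below, since the giant subtree has $\ge\delta s_n$ vertices and a positive fraction of them are deep enough). Applying $\Psi_{V,U}$ then detaches $T_n(V)$ and regrafts it low (at $U$'s old position), while the middle block $T_n(U,V)$ — which originally occupied heights roughly $h$ to $2h$ — is pushed \emph{up} to sit above height $\approx \lambda$ after $T_n(V)$. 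Crucially $|T_n(V)|$ is still only $o(s_n)$ (it is a piece of the giant subtree sitting above height $h$, hence above the plateau in rescaled terms, so it carries negligible mass in the early generations), whereas $T_n(U,V)$ carries the bulk. The upshot: the transformed tree $\tilde\tau_n$, which has law $\p_{{\bf s}_n}$ by Proposition \ref{propoIgualdadDistribucionalIntro}, has a rescaled cumulative profile $\tilde C^n$ that is already strictly increasing on an initial interval (from the regrafted $T_n(V)$ plus everything below $w$), is then \emph{flat} on an interval contained in $(2\lambda_1,\lambda)$ (because the mass that used to occupy those generations has been moved away), and increases again after $\lambda$. I would make this quantitative by comparing $\tilde C^n$ at the three relevant times $2\lambda_1$, some $\lambda'\in(2\lambda_1,\lambda)$, and $\lambda$, using the continuity of hitting times (Proposition \ref{proposition_HittingTimesOfCumulativePopulation}) to transfer these statements to any subsequential limit $\tilde C$.

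Finally, passing to a further subsequence, $\tilde C^n$ converges to some $\tilde C=C^{\tilde\Lambda}$ by Proposition \ref{propoConvergenciaSubsucPerfil}; but the previous paragraph shows, with positive probability, $\tilde C$ is non-constant on an initial interval yet has a constancy interval on which it takes a value strictly between $0$ and $1$. This contradicts the ``no intermediate plateau'' consequence of Proposition \ref{proposition_deterministicIVP} (solutions $c$ to \eqref{deterministicIVP} have no constancy intervals except at the values $0$ and $1$). Hence $\proba{\Lambda>\lambda}=0$ for every $\lambda>0$, and letting $\lambda\downarrow 0$ along a countable sequence gives $\proba{\Lambda\in(0,\infty]}=0$, which is \eqref{eqnLambda0HasProbabilityZero1}. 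The main obstacle I anticipate is the bookkeeping in the second step: making precise that (i) on $\{\Lambda>\lambda\}$ the post-plateau mass really is captured by a \emph{single} low ancestor $w$ (rather than being spread over many incomparable subtrees, in which case no single application of the 213 move suffices), and (ii) the detached piece $T_n(V)$ is genuinely $o(s_n)$ while $T_n(U,V)$ is macroscopic, uniformly along the subsequence and with the conditioning on $V$'s location respected. Resolving (i) likely requires choosing $\lambda_1$ small and using that $C^\Lambda$ jumps from $0$ to strictly positive values immediately after $\Lambda$ with a definite rate $1/X$, so that the overwhelming majority of the excursion of $\tilde C$ near time $\Lambda$ comes from one spot; and handling the conditioning in (ii) cleanly is exactly what Proposition \ref{propoIgualdadDistribucionalIntro} was designed to allow.
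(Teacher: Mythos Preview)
Your overall shape is right --- contradict via the 213 transformation by producing a limiting cumulative profile that is positive, then flat, then increasing --- but the execution has two genuine gaps.

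\medskip
\textbf{The 213 move is reversed.} You write that ``$\Psi_{V,U}$ detaches $T_n(V)$ and regrafts it low \dots\ while the middle block $T_n(U,V)$ \dots\ is pushed up''. That is the opposite of what the transformation does: with $U$ the ancestor and $V$ the descendant, the three pieces $\tau(1,U)$, $\tau(U,V)$, $\tau(V)$ are rearranged in the order $2,1,3$. So $\tau(U,V)$ becomes the \emph{new base}, the original base $\tau(1,U)$ is lifted to the middle, and $\tau(V)$ stays on top. Your size claims (``$|T_n(V)|=o(s_n)$'', ``$T_n(U,V)$ carries the bulk'') are then exactly backwards for the conclusion you want.

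\medskip
\textbf{The giant subtree has to be found near the top, and this is the hard step you have skipped.} You place $w$ at height $\le \lambda_1 s_n/b_n$; but then $w=\text{root}$ already works, and with the correct 213 move the middle piece $\tau(U,V)$ lives at heights $(h,2h)\subset[0,\lambda s_n/b_n]$, i.e.\ inside the thin region, so it has $o(s_n)$ vertices and no thick--thin--thick pattern results. The paper instead locates a vertex $v_k$ at height $\Lambda^{(k)}_{k,1-\epsilon_k}-\lambda_1^{(k)}$ (i.e.\ at rescaled distance $\lambda_1$ from the \emph{top}) with $|\Theta_k(v_k)|\ge \delta s_k$. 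With $U_k$ uniform in that subtree and $N_k$ its ancestor at distance $\lambda_1^{(k)}$, the middle piece $\tau(N_k,U_k)$ contains at least half of the giant subtree within $\lambda_1^{(k)}$ generations --- this is what makes the transformed base thick --- while the lifted original base supplies the thin band. Your ``main obstacle (i)'' is misdiagnosed: a single \emph{low} ancestor capturing the mass is trivial; the real obstacle is a single \emph{high} ancestor doing so.

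Showing that such a high-rooted giant subtree exists is the paper's part (B2), and it is where the \emph{unbounded variation} hypothesis on $X$ is actually used. One observes that the forest above height $\Lambda^{(k)}_{k,1-\epsilon_k}-\lambda_1^{(k)}$ has at least $\epsilon s_k$ vertices (part (D)), and that the sizes of its component subtrees equal excursion lengths of the corresponding (depth-first) walk above its running minimum. If all these excursions were $\le \delta_m$ in rescaled length for every $m$, the limit process $X^\epsilon:=X_{1-\epsilon+\cdot}$ would coincide with its running minimum on an interval, forcing it to have bounded variation there --- contradiction. Your closing heuristic (``$C^\Lambda$ jumps from $0$ to positive with definite rate $1/X$'') does not touch this mechanism.
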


The reader can imagine the event on the left-hand side as ocurring with a subsequential limit of trees having a thin base (looking like a cord) with (scaled) height approximately $\Lambda$; after that, a giant subtree (with size proportional to the size of the tree) starts to grow. 
Attached to such cord, there can be other cords with (possibly) giant subtrees growing after height $\Lambda$ 
(as in Figure \ref{figLambdaIsZero}).

\begin{figure}
		\includegraphics[height=.4\textheight]{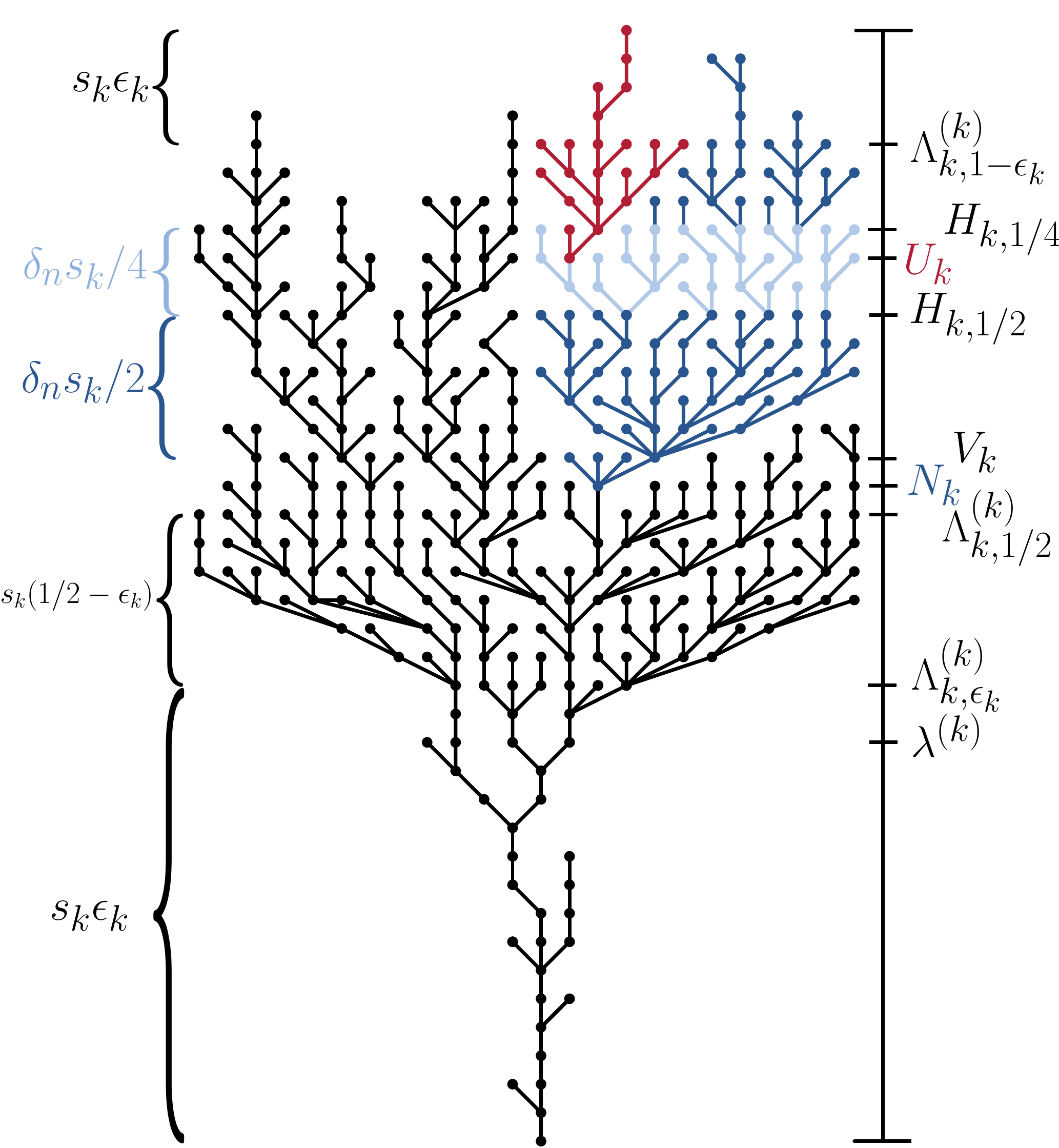}
		\vphantom{\includegraphics[height=.4\textheight]{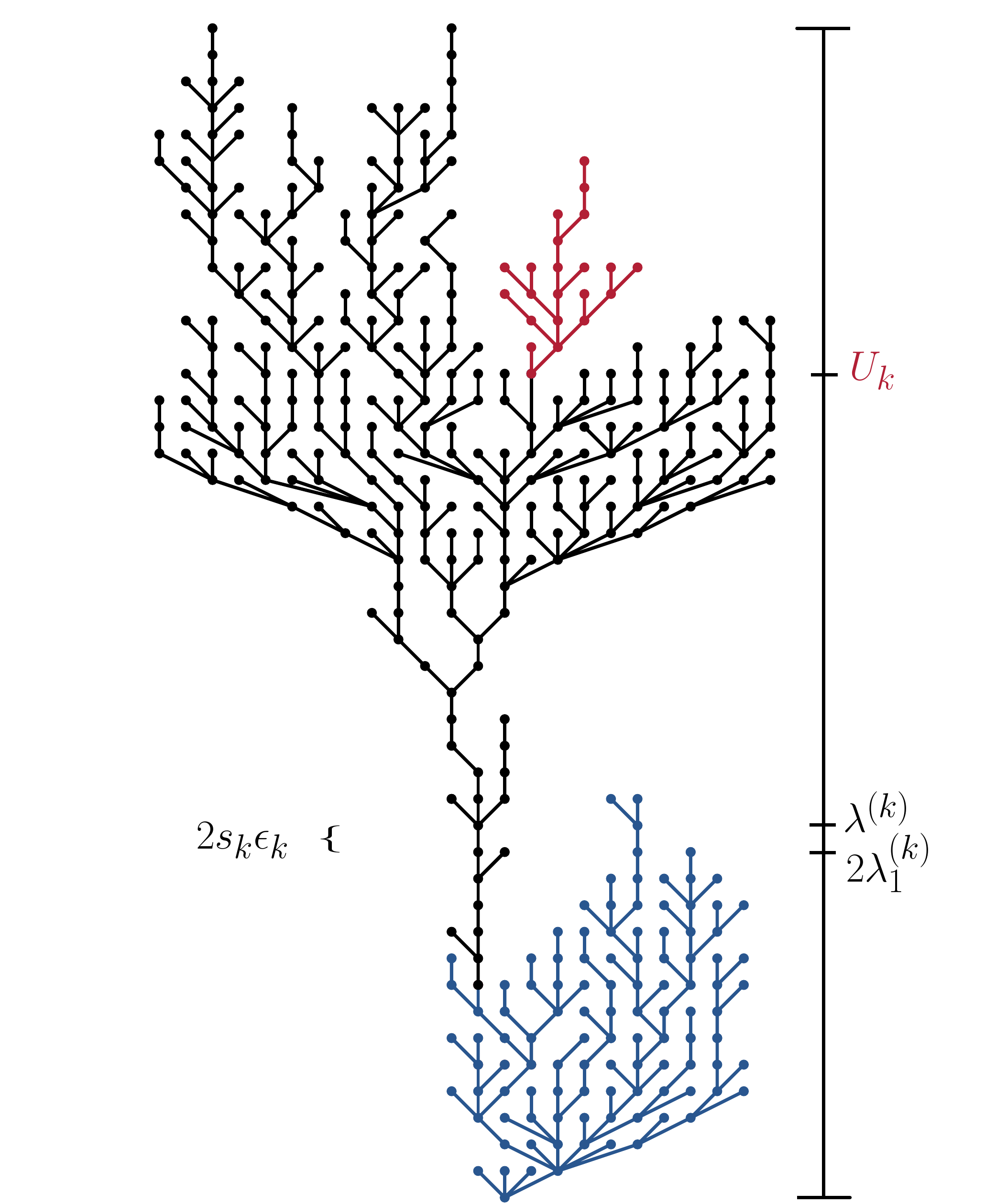}}
		\quad\quad\quad\quad
		\includegraphics[height=.4\textheight]{figLambdaIsZeroV2Transformed.pdf}
	\caption{\footnotesize On the left, the original tree. The giant subtree has root $v_k$, and is at distance $\lambda_1^{(k)}$ of height $\Lambda_{k,1-\epsilon_k}^{(k)}$. The uniform variable $U_k$ is in such subtree, between heights $h_{k,1/2}$ and $h_{k,1/4}$. The ancestor $N_k$ of $U_k$ at distance $\lambda_1^{(k)}$, is an ancestor of $v_k$.
		After applying the 213 transformation (figure on the right), the new tree is thick before height $(2\lambda_1)^{(k)}$, thin between heights $\{(2\lambda_1)^{(k)},\ldots, \lambda^{(k)} \}$, and again thick after $\lambda^{(k)}$.}
	\label{figLambdaIsZero}
\end{figure}

To prove Theorem \ref{teoLambdaIsZeroOrInfinity}, we use the path transformation of Proposition \ref{propoIgualdadDistribucionalIntro}. 
We will use this transformation for individuals \emph{near the top of the tree}. 
To give a formal definition of what \emph{near the top} means, 
we need a lemma regarding the convergence of the heights of the first and last individuals. 

Using Skorohod's theorem, we work on the space where the almost sure convergence $(X^{n_l},C^{n_l}, Z^{n_l})\to (X,C^\Lambda,Z^\Lambda)$ happens, for some deterministic subsequence $(n_l,l\geq 1)$. 
Write only $(X^l,C^l,Z^l)\to (X,C^\Lambda,Z^{\Lambda})$ to avoid cumbersome notation.
Also, note that, on this same probability space, we can let $\Theta_l$ be the only tree whose breadth-first walk is (the unscaled version of) $X^l$. 
We introduce the hitting times of $(\epsilon,\infty)$ by $C^l$. 

\begin{definition}\label{defiLambdaEps}
	For $\epsilon\in (0,1)$ and $l\in \na$, define the first height that the rescaled cumulative profile has more than $\epsilon$ individuals as
\[
		\Lambda_{l,\epsilon}:=\inf\{t>0:C^l(t)>\epsilon \}. 
\]
\end{definition}Since $C^\Lambda$ is a shift by $\Lambda$ of $C^0$, 
the definition of the Lamperti transformation tells us that
\[
	\inf\{t>0: C^\Lambda(t)>\epsilon \}=I^\Lambda(\epsilon)=\Lambda+I(\epsilon)
 \]for any $\epsilon\in (0,1)$. 
However, since $C^\Lambda$ is strictly increasing except when at $0$ or $1$, 
we can apply Proposition \ref{proposition_HittingTimesOfCumulativePopulation} to obtain the first part of the following lemma. 
\begin{lemma}\label{lemmaLambdaLEpsToLambdaEps}
	For every $\epsilon\in (0,1)$, the convergence $\Lambda_{l,\epsilon}\to \Lambda+I(\epsilon)$ holds almost surely as $l\to \infty$. Furthermore, for every $0<a<b<1$, $\Lambda_{l,b}-\Lambda_{l,a}\to I(b)-I(a)$. 
	Consider any sequence $\epsilon_{k}\downarrow 0$. 
	Then, there exists a deterministic subsequence $l_k$ 
	such that
\[
	(\Lambda_{l_k,\eps_k},\Lambda_{l_k,1-\eps_k}-\Lambda_{l_k,1/2})	
	\to_{l\to\infty}  (\Lambda,I_1-I_{1/2})
	\]almost surely. 
\end{lemma}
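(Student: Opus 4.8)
The plan is to prove the three assertions of Lemma \ref{lemmaLambdaLEpsToLambdaEps} in increasing order of difficulty, all while working on the Skorohod space where the almost sure convergence $(X^l, C^l, Z^l) \to (X, C^\Lambda, Z^\Lambda)$ holds. For the first assertion, observe that by the proof of Proposition \ref{propoConvergenciaSubsucPerfil} the limit $C^\Lambda$ is continuous, non-decreasing, equal to $0$ on $[0,\Lambda]$, tends to $1$ at infinity (since $\int_{0+} 1/X_s\, ds < \infty$ ensures $C^\Lambda$ is non-trivial and $X$ is absorbed at $0$ at time $1$), and is strictly increasing whenever its value lies in $(0,1)$ by Proposition \ref{proposition_deterministicIVP}. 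Thus $C^\Lambda$ satisfies the hypotheses of Proposition \ref{proposition_HittingTimesOfCumulativePopulation}, and since $\Lambda_{l,\epsilon} = h_\epsilon(C^l)$ and $C^l \to C^\Lambda$ on Skorohod space, we get $\Lambda_{l,\epsilon} \to h_\epsilon(C^\Lambda)$. By the definition of the Lamperti transform, $h_\epsilon(C^\Lambda) = I^\Lambda(\epsilon) = \Lambda + I(\epsilon)$, which is the claim. The second assertion, $\Lambda_{l,b} - \Lambda_{l,a} \to I(b) - I(a)$ for $0 < a < b < 1$, then follows immediately by subtracting the two limits (the $\Lambda$ cancels), which is exactly the remark made just before the statement about $\Lambda_b(X^l) - \Lambda_a(X^l)$ converging even when each term may blow up.

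The third assertion is the one requiring genuine care, because $\Lambda$ itself may be infinite (the case $\Lambda \in \{\infty\}$ is not excluded a priori, indeed it is precisely part of what Theorem \ref{teoLambdaIsZeroOrInfinity} rules out later), so $\Lambda_{l,\epsilon_k}$ may diverge as $l \to \infty$ for each fixed $k$, and we cannot simply invoke continuity uniformly in $k$. The idea is a diagonal extraction. From the first assertion, for each fixed $k$ we have $\Lambda_{l,\epsilon_k} \to \Lambda + I(\epsilon_k)$ a.s.\ as $l \to \infty$; since $\epsilon_k \downarrow 0$ and $I(\epsilon_k) \downarrow 0$ (as $\int_{0+} 1/X_s\, ds < \infty$ means $I$ is continuous at $0$ with $I(0)=0$), the limits $\Lambda + I(\epsilon_k)$ decrease to $\Lambda$. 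Similarly, by the second assertion with $a = 1/2$, $b = 1 - \epsilon_k$, we have $\Lambda_{l,1-\epsilon_k} - \Lambda_{l,1/2} \to I(1-\epsilon_k) - I(1/2) \uparrow I_1 - I_{1/2} =: I(1-) - I(1/2)$ as $k \to \infty$ (using the convention $I_1 = I(1-) = \int_0^1 1/X_s\,ds$, which may be $+\infty$). Now I would extract, for each $k$, an index $l_k$ large enough (and increasing in $k$) so that $|\Lambda_{l_k,\epsilon_k} - (\Lambda + I(\epsilon_k))| < 1/k$ and $|(\Lambda_{l_k,1-\epsilon_k} - \Lambda_{l_k,1/2}) - (I(1-\epsilon_k) - I(1/2))| < 1/k$; combining with the convergence of the deterministic quantities $\Lambda + I(\epsilon_k) \to \Lambda$ and $I(1-\epsilon_k) - I(1/2) \to I_1 - I_{1/2}$ gives the desired a.s.\ convergence along $l_k$.

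The subtlety — and the step I expect to be the main obstacle — is making the diagonal extraction yield a \emph{deterministic} subsequence $(l_k)$ while the convergences in the first two assertions are only almost sure (so the "large enough $l$" threshold is a priori random). The resolution is to use almost sure convergence together with the fact that, for fixed $k$, convergence a.s.\ implies convergence in probability, so one can choose a deterministic $l_k$ with $\mathbf{P}(|\Lambda_{l_k,\epsilon_k} - (\Lambda+I(\epsilon_k))| > 2^{-k}) < 2^{-k}$ and likewise for the second quantity; by Borel--Cantelli the exceptional events occur only finitely often, giving a.s.\ convergence along the deterministic $(l_k)$. One must also ensure $l_k$ can be taken strictly increasing, which is harmless since one may always pass to a further deterministic sub-subsequence. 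Beyond this, everything is bookkeeping: one should double-check that $I$ is indeed continuous at $0$ and that $I(1-\epsilon_k) \to I_1$ (both from the integrability hypothesis $\int_{0+}1/X_s\,ds < \infty$ and the definition of $I_1$, allowing the value $+\infty$ in the latter), and that the convention $\inf\emptyset = +\infty$ in Definition \ref{defiLambdaEps} is consistent with $C^\Lambda$ eventually exceeding any $\epsilon < 1$.
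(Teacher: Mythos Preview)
Your approach is the paper's approach: Proposition \ref{proposition_HittingTimesOfCumulativePopulation} for the first assertion, the remark at the end of Subsection \ref{subsection_SequentialCompactnessOfProfiles} for the second, and a Borel--Cantelli diagonal extraction (convergence in probability along each $k$, then a summable choice of thresholds) for the third.

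One point deserves tightening, and it is exactly where the paper is more careful than you are. You claim $C^\Lambda$ ``tends to $1$ at infinity (since $\int_{0+}1/X_s\,ds<\infty$ ensures $C^\Lambda$ is non-trivial\ldots)''. This is not justified: the hypothesis $\int_{0+}1/X_s\,ds<\infty$ guarantees that $C^0$ is non-trivial, but $C^\infty\equiv 0$ is \emph{always} a solution to \eqref{deterministicIVP}, and at this stage of the argument $\Lambda=\infty$ has not yet been ruled out (that is precisely the content of Theorem \ref{teoLambdaIsZeroOrInfinity}). Consequently Proposition \ref{proposition_HittingTimesOfCumulativePopulation} does not literally apply when $\Lambda=\infty$ (it requires $c(\infty)=1$), your ``subtract the two limits'' for the second assertion becomes $\infty-\infty$, and your inequality $|\Lambda_{l_k,\epsilon_k}-(\Lambda+I(\epsilon_k))|<1/k$ is meaningless. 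The paper resolves all three simultaneously by working with a distance $d$ generating the topology of the one-point compactification of $[0,\infty)$, so that statements like $\Lambda_{l,\epsilon_k}\to\Lambda+I(\epsilon_k)$ and the Borel--Cantelli bound $\p\!\paren{d(\Lambda_{l,\epsilon_k},\Lambda+I(\epsilon_k))>\delta_k}<\delta_k$ make sense uniformly in whether $\Lambda$ is finite or not; the second assertion is handled not by subtraction but directly via the shifted process $X^a$ (the remark you cite). You are clearly aware of the issue, but the compactification is the clean way to carry it through.
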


Indeed, from the proof of this lemma, we see that the previous convergence takes place together with $X^k$ and $C^k$. 
Also, we remark that the choice of 1/2 is arbitrary, and the lemma works for any $a\in (0,1)$.

\begin{proof}
	For the second part of the statement, recall our remark at the end of Subsection \ref{subsection_SequentialCompactnessOfProfiles} %
	and that 
 $I(b)-I(a):=\int_a^bds/X(s)<\infty$ for $0<a<b<1$. 
Thus, even if $\Lambda_{l,a}\to \infty$, the sequence
$\Lambda_{l,b}-\Lambda_{l,a}$ has a finite limit. 

	Note that $\Lambda+I(\epsilon)\to \Lambda$ as $\epsilon\downarrow 0$, 
	since either both sides are finite or infinite. 
	Also, $I_{1-\eps_k}-I_{1/2}\to I_1-I_{1/2}$ as $k\to\infty$. 
	Let $\delta_k$ be a summable sequence decreasing to zero and consider a distance $d$ which generates the topology of  the one-point compactification of $[0,\infty)$. 
	From the (almost sure) convergences 
	$\Lambda_{l,\eps_k}\to \Lambda_{\eps}$ and $\Lambda_{l,1-\eps_k}-\Lambda_{l,1/2}\to I_{1-\eps_k}-I_{1/2}$ 
	as $l\to\infty$, 
	there exists $l_k$ such that for $l\geq l_k$
	\[
		\proba{\imf{d}{\Lambda_{l,\eps_k}, \Lambda_{\eps_k}}\text{ or } \imf{d}{\Lambda_{l,1-\eps_k}-\Lambda_{l,1/2}, I_{1-\eps_k}-I_{1/2}}>\delta_k}<\delta_k. 
	\]Without loss of generality, we assume that $l_k$ increases, 
	and deduce from the Borel-Canteli Lemma that $\Lambda_{l_k,\eps_k}\to \Lambda$ and $\Lambda_{l_k,1-\eps_k}-\Lambda_{l_k,1/2}\to I_1-I_{1/2}$ almost surely. 
\end{proof}

\begin{remark}\label{remarkDefinitionLambdaToTheParenk}
	In the following, we will use the temporal rescaling $a_{k}=s_k/b_{k}$, which comes from the Lamperti transform in Subsection \ref{subsection_SequentialCompactnessOfProfiles}. 
	For ease of notation, we sometimes put the superscript $(k)$ to refer of such rescaling, for example, for $\lambda>0$, we write $\lambda^{(k)}:=\floor{\lambda a_{s_k}}$, and also $\Lambda_{k,\epsilon_{k}}^{(k)}=\floor{\Lambda_{k,\epsilon_{k}}a_{s_k}}$.
\end{remark}

To prove Theorem \ref{teoLambdaIsZeroOrInfinity}, we fix any $\lambda>0 $ and show that
\begin{equation}\label{eqnLambda0AndLambda1FiniteHasProbabilityZero}
\proba{\Lambda\in (\lambda,\infty]}=0
\end{equation}in the following manner.
Recall the 213 transformation 
from Proposition \ref{propoIgualdadDistribucionalIntro}. 
Let  $\delta_n\downarrow 0$ and $\lambda_1\in (0,\lambda/2)$. 
In Subsection \ref{subsecGiantSubtree} we prove that for $n$ big enough and under the event of Equation \eqref{eqnLambda0AndLambda1FiniteHasProbabilityZero}, \emph{near the top of the tree}, that is, at height $\Lambda^{(k)}_{k,1-\epsilon_k}-\lambda_1^{(k)}$, there is one vertex $v_k$ having at least $\floor{\delta_n s_{k}}$ descendants, for every $k$ big enough. 
The subtree rooted at $v_k$ will be referred to as a \emph{giant subtree}. 
It is a subtree with a positive proportion of individuals, almost all of them between its first $\lambda_1^{(k)}$ generations.
Using such a giant subtree, 
we apply the 213 transformation to obtain a tree which is thick near the root, thin in the middle and thick after that. 
This contradicts the fact that solutions to $D_+C=X\circ C$ are strictly increasing unless taking the values $0$ or $1$. 
The steps in the argument are:
\begin{enumerate}\label{enumerateStepsToMoveGiantSubtree}
	\item Show that one can intersect with the set $\Lambda_{k,1-\epsilon_k}-\lambda_1>\Lambda_{k,1/2}$. 
	\item Consider two heights $H_{k,1/2}<H_{k,1/4}$ where the giant subtree is thick: 
	\emph{up to} $H_{k,1/2}$ there is at least half of the size of the giant subtree, \emph{between } $H_{k,1/2}$ and $H_{k,1/4}$ there is at least quarter of the size of the giant subtree. 
	\item The probability that $U_k$ (the uniform variable used for the 213 transformation) be in the giant subtree, \emph{between} heights $H_{k,1/2}$ and $H_{k,1/4}$, is at least $\delta_n/4$. Intersect with this set. 
	\item Apply the 213 transformation cutting at $U_k$ and $N_k$, the latter being the ancestor of $U_k$ at distance $\lambda_1^{(k)}$ from $U_k$. 
	Note that $N_k$ is an ancestor of $v_k$. 
	\item\label{itemPropertiesOfTransformedTree} On such events, the transformed tree $\tilde\Theta_k$ has 
	\begin{enumerate}
		\item at least $\delta_n s_k/2$ individuals up to height $(2\lambda_1)^{(k)}$, 
		\item less than $2\epsilon_ks_k$ individuals between heights $\{(2\lambda_1)^{(k)},\ldots, \lambda^{(k)} \}$,
		\item at least $s_k/2-\epsilon_ks_k$ individuals after $\lambda^{(k)}$.
	\end{enumerate} 
	\item The transformed tree has the same distribution as the original, 
	but its cumulative Lamperti transform is constant between two intervals where it increases. 
	This has probability zero.
\end{enumerate}

We now carry out this program. 
Since $I(1)-I(1/2)\in (0,\infty)$ almost surely,  note that it is enough to establish
\begin{equation}\tag{A}\label{eqnLambda_0IsZeroOrInfinity}
\proba{\Lambda\in (\lambda, \infty), I(1)-I(1/2)\in (\lambda_2,\lambda_2')}=0,
\end{equation}where $0<\lambda<\infty$ and $0<\lambda_2<\lambda_2'<\infty$ are continuity points for the distributions of $\Lambda$ and  $I(1)-I(1/2)$, respectively. 
We decompose \ref{eqnLambda_0IsZeroOrInfinity}{\color{DarkOrchid}} using the set where there is a giant subtree at the top of the tree. 
For a plane tree $\tau_k$ with $s_k$ vertices, let $u\in [s_k]$ be any vertex on $\tau_k$.
We denote by $\tau_k(u)$ the \emph{subtree rooted at $u$} in the tree $\tau_k$ 
(a tree with root $u$  and all its descendants). 
For $v\in [0,1]$ with $\floor{vs_k}\in [s_k]$, we also refer to $v$ as a vertex in the tree (and write $\tau_k(v)$ instead of $\tau_k(\floor{v s_k})$).
Let $\delta_m\downarrow 0$. 
For any $m,k\in \na$ and $\lambda_1>0$ consider
\[
	A(m,k)=\{\mbox{there exists a vertex $V_k$ at height $\Lambda_{k,1-\epsilon_k}^{(k)}-\lambda^{(k)}_1$ such that}  \,|\Theta_k(V_k)|> \delta_m s_k \},
\]where we defined
$\lambda^{(k)}:=\floor{\lambda a_{k}}$ in Remark \ref{remarkDefinitionLambdaToTheParenk}. 
Recall that Lemma \ref{lemmaLambdaLEpsToLambdaEps} implies $\Lambda_{k,\epsilon_k}\to \Lambda$, and $\Lambda_{k,1-\epsilon_k}-\Lambda_{k,1/2}\to I(1)-I(1/2)$ almost surely.
Then, we bound \ref{eqnLambda_0IsZeroOrInfinity}{\color{DarkOrchid}} as
\begin{equation}\tag{B}\label{eqnIntersectingAnk}
\begin{split}
& \proba{\Lambda\in (\lambda,\infty],I(1)-I(1/2)\in (\lambda_2,\lambda_2')}\\
&\leq \proba{\lim_k\,\{\Lambda_{k,\epsilon_k}>\lambda, \Lambda_{k,1-\epsilon_k}-\Lambda_{k,1/2}\in (\lambda_2,\lambda_2')\}}\\
& \leq \varlimsup_m \varlimsup_k\, \p\left(\left\{\Lambda_{k,\epsilon_k}>\lambda ,\Lambda_{k,1-\epsilon_k}-\Lambda_{k,1/2}\in (\lambda_2,\lambda_2')\right\}\cap A(m,k) \right)\\
& + \varlimsup_m \varlimsup_k\p\left(\left\{I(1)-I(1/2)\in (\lambda_2,\lambda_2')\right\}\cap A(m,k)^c \right).
\end{split}
\end{equation}(The last equality follows since
\[
	\proba{
	\set{\Lambda_{k,1-\eps_k}-\Lambda_{k,1/2}\in (\lambda_2,\lambda'_2)}
	\Delta \set{I(1)-I(1/2)\in (\lambda_2,\lambda_2')}
	}
	\to_{k\to\infty} 0. )
\]
The summands on the right-side will be denoted by \ref{eqnIntersectingAnk}{\color{DarkOrchid}1} and \ref{eqnIntersectingAnk}{\color{DarkOrchid}2}, respectively.
First we prove that under \ref{eqnIntersectingAnk}{\color{DarkOrchid}1}, 
a giant subtree is moved to the base of the tree by applying the 213 transformation, 
causing a zero probability event. 
After that, we prove \ref{eqnIntersectingAnk}{\color{DarkOrchid}2} is zero, 
which is interpreted as the existence of a (vanishingly small) giant component.

\subsection{\ref{eqnIntersectingAnk}{\color{DarkOrchid}1} equals zero: moving a giant subtree to the base of the tree}
Now we formalize the steps given in page \pageref{enumerateStepsToMoveGiantSubtree} and Figure \ref{figLambdaIsZero}, proving that \ref{eqnIntersectingAnk}{\color{DarkOrchid}1} can be bounded for $n$ big enough and $2\lambda_1<\lambda$ by 
\[
\proba{C^\Lambda(2\lambda_1)\geq \delta_n/2,C^\Lambda(\lambda)-C^\Lambda(2\lambda_1)=0,1-C^\Lambda(\lambda)\geq 1/2},
\]which is zero since $C^\Lambda$ cannot be constant inside an interval where it is strictly increasing.  
First we need some definitions. 

\begin{definition}
	Let $\tau$ be a tree and $v$ a non-leaf vertex in $\tau$. 
	For any $h_1,h_2\in \na\cup\{\infty\}$ with $|v|\leq h_1\leq h_2$, let
	\[
		|\tau(v)|(h_1,h_2)=\{\mbox{number of individuals in $\tau(v)$ having height $h\in \{h_1,\ldots, h_2\}$ in the tree $\tau$ }\}.
	\]Let $|\tau(v)|(h_1):=|\tau(v)|(h(v),h_1)$, the number of individuals in $\tau(v)$ up to height $h_1$ in the tree $\tau$.
Define the first height where the subtree $\tau(v)$ has at least half of its size:
\[
	h_{\tau(v),1/2}=\min\{h\geq h(v):|\tau(v)|(h)\geq |\tau(v)|/2 \},
\]and the first height $h$ after $h_{\tau(v),1/2}$, where the subtree $\tau(v)$ accumulates a quarter of its size between $\{h_{\tau(v),1/2}+1,\ldots, h \}$:
\[
	h_{\tau(v),1/4}=\min\{h\geq h_{\tau(v),1/2}+1:|\tau(v)|(h_{\tau(v),1/2}+1,h)\geq |\tau(v)|/4 \}.
\]
\end{definition}
We now prove the our random tree cannot have a thin base if there is a giant subtree. 

\begin{proposition}
	Let $2\lambda_1<\lambda_2\wedge \lambda$ and $\lambda_2<\lambda_2'$. 
	Then for every $m$ big enough
\begin{linenomath}
\begin{align*}
&  \varlimsup_k\, \p\left(\Lambda_{k,\epsilon_k}>\lambda ,\Lambda_{k,1-\epsilon_k}-\Lambda_{k,1/2}\in (\lambda_2,\lambda_2'),\right.\\
& \hspace{2cm} \left.\mbox{exists a vertex $V_k$ at height $\Lambda_{k,1-\epsilon_k}^{(k)}-\lambda_1^{(k)}$ with }|\Theta_k(V_k)|> \delta_m s_k \right)=0.
\end{align*}\end{linenomath}
\end{proposition}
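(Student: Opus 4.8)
The plan is to argue by contradiction via the 213 transformation. Suppose the event in the statement has positive $\varlimsup$ probability; then along a subsequence we may work on the almost-sure-convergence space of Lemma \ref{lemmaLambdaLEpsToLambdaEps}, where on a positive-probability event there is, for all large $k$, a giant subtree $\Theta_k(V_k)$ rooted at height $\Lambda_{k,1-\epsilon_k}^{(k)}-\lambda_1^{(k)}$ with more than $\delta_m s_k$ vertices. The key structural input is that such a giant subtree must itself be ``thick near its own base'': most of its mass sits within its first $O(\lambda_1^{(k)})$ generations. This is because the subtree $\Theta_k(V_k)$, rooted at height $\approx \Lambda_{k,1-\epsilon_k}^{(k)}$, cannot extend much beyond height $\Lambda_{k,1-\epsilon_k}^{(k)}$: the total population above level $1-\epsilon_k$ in the whole tree is at most $\epsilon_k s_k$, and $\Lambda_{k,1-\epsilon_k}^{(k)}$ is (up to lower order) the height at which cumulative population crosses $1-\epsilon_k$. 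So the heights $h_{\Theta_k(V_k),1/2}$ and $h_{\Theta_k(V_k),1/4}$ both lie in the window $\{|V_k|, \ldots, |V_k| + \lambda_1^{(k)}\}$ once $k$ is large, after first shrinking $\epsilon_k$ relative to $\lambda_1$; this is Step 2 of the program.

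Next I would introduce the uniform vertex $U_k$ used in Proposition \ref{propoIgualdadDistribucionalIntro} (with the fixed height parameter $h = \lambda_1^{(k)}$). On the giant-subtree event, conditionally the chance that $U_k$ falls in $\Theta_k(V_k)$ \emph{between} heights $h_{\Theta_k(V_k),1/2}+1$ and $h_{\Theta_k(V_k),1/4}$ is at least $(\delta_m/4)\cdot(b_k/s_k)\cdot$(correction), more precisely at least $\tfrac{1}{4}\delta_m s_k / s_k = \delta_m/4$ when $U_k$ ranges over $\{2,\ldots,s_k\}$ — intersecting with this set only loses a constant factor $\delta_m/4$ in probability, so it suffices to derive a contradiction on the intersected event (Step 3). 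On that event, let $N_k$ be the ancestor of $U_k$ at distance $\lambda_1^{(k)}$; since $|V_k|$-to-$U_k$ distance is at most $\lambda_1^{(k)}$ and $U_k$ is a descendant of $V_k$, the vertex $N_k$ is an ancestor of (or equal to) $V_k$, hence of the whole giant subtree. Apply $\Psi$ cutting at $U_k$ and $N_k$: the subtree $\Theta_k(U_k)$ (which contains at least $\delta_m s_k/2$ vertices, being a quarter-to-half chunk... wait — it contains the part of the giant subtree above $h_{1/2}$, which is at least half of $\delta_m s_k$) gets grafted at $N_k$, so after transformation it sits entirely below height $|N_k| + \lambda_1^{(k)} \le \Lambda_{k,1-\epsilon_k}^{(k)} \le 2\lambda_1^{(k)}$ — here I use $|N_k| = \Lambda_{k,1-\epsilon_k}^{(k)} - 2\lambda_1^{(k)}$ roughly and $|N_k|\le \lambda_1^{(k)}$ after relabeling, more carefully: the grafted copy is re-rooted at $N_k$ and its own height is $\le \lambda_1^{(k)}$, so it lies below height $2\lambda_1^{(k)}$ in $\tilde\Theta_k$. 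Meanwhile the ``middle'' piece $\Theta_k(N_k,U_k)$ has only the vertices of the original tree between heights $\lambda_1^{(k)}$-ish below $\Lambda_{k,1-\epsilon_k}^{(k)}$ and $\Lambda_{k,1-\epsilon_k}^{(k)}$, and outside the giant subtree the population at these near-top heights is controlled by $\epsilon_k s_k$; so $\tilde\Theta_k$ has fewer than $2\epsilon_k s_k$ vertices strictly between heights $2\lambda_1^{(k)}$ and $\lambda^{(k)}$. And since $\Lambda_{k,\epsilon_k}>\lambda$ forces $\ge s_k/2 - \epsilon_k s_k$ vertices above height $\lambda^{(k)}$ (the bulk $[1/2,1)$ of the population has not yet appeared by height $\Lambda_{k,1/2}^{(k)} > \lambda^{(k)}$... needs the ordering $\Lambda_{k,1/2} \ge \Lambda_{k,\epsilon_k} > \lambda$, which holds as $\epsilon_k < 1/2$), these are exactly properties (a),(b),(c) of Step 5.

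Finally (Step 6), $\tilde\Theta_k$ has the same law as $\Theta_k$ by Proposition \ref{propoIgualdadDistribucionalIntro}, so its rescaled cumulative profile $\tilde C^k$ has the same subsequential-limit structure: any weak limit is some $C^{\tilde\Lambda}$, continuous and strictly increasing except at values $0$ and $1$. But properties (a)--(c), after rescaling by $a_k = s_k/b_k$ and passing to the limit, say that a subsequential limit $\hat C$ of $\tilde C^k$ satisfies $\hat C(2\lambda_1) \ge \delta_m/2 > 0$, $\hat C(\lambda) - \hat C(2\lambda_1) = 0$, and $1 - \hat C(\lambda) \ge 1/2 > 0$, i.e. $\hat C$ is constant on $[2\lambda_1,\lambda]$ while taking a value in $(0,1)$ there — contradicting Proposition \ref{proposition_deterministicIVP}. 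Since the whole construction had probability bounded below by a positive constant times the probability of \ref{eqnIntersectingAnk}{\color{DarkOrchid}1} (through the intersected events), that probability must be $0$.

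\textbf{Main obstacle.} The delicate point is Step 2 / the bookkeeping of heights: one must choose the order of quantifiers so that $\epsilon_k \downarrow 0$ fast enough (relative to the fixed $\lambda_1$) that the giant subtree genuinely concentrates within $\lambda_1^{(k)}$ generations of its root and that the ``middle band'' population is $o(s_k)$ — this is where the hitting-time continuity of Proposition \ref{proposition_HittingTimesOfCumulativePopulation} and the inequality relating $\Lambda_{k,1-\epsilon_k}$ to near-top population are used, and it is easy to be off by a factor of $2$ in the heights. The discrete-vs-continuous rescaling (the $a_k = s_k/b_k$ factor, $\lfloor \cdot \rfloor$'s) must be tracked so that the limiting $\hat C$ inequalities come out with the stated constants; this is routine but is the most error-prone part of writing it out.
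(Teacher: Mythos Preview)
Your overall architecture matches the paper exactly: intersect with the event that $U_k$ lands in the giant subtree between heights $H_{k,1/2}$ and $H_{k,1/4}$, apply the 213 transformation at $(N_k,U_k)$ with $N_k$ the ancestor of $U_k$ at distance $\lambda_1^{(k)}$, derive properties (a)--(c) for the transformed tree, and contradict Proposition \ref{proposition_deterministicIVP}. The paper also defers the verification that $H_{k,1/4}<\Lambda_{k,1-\epsilon_k}^{(k)}$ with probability $\to 1$ to the end, proving it via the bound $\|Z^k\|\le\|X^k\|$ and $b_k/s_k\to 0$; your sketch of Step 2 is the right idea but you would need this quantitative input.

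There is, however, a genuine error in your Step 4--5, and it is exactly the ``factor of $2$ in the heights'' trap you flagged. You have the roles of the three pieces in the 213 transformation reversed. In $\Psi_{N_k,U_k}$ the piece that becomes the \emph{base} of $\tilde\Theta_k$ is the \emph{middle} piece $\Theta_k(N_k,U_k)$ (subtree 2), not $\Theta_k(U_k)$ (subtree 3). It is subtree 2 that carries the mass: since $N_k$ is an ancestor of $V_k$ and $h(U_k)>H_{k,1/2}$, the band of the giant subtree between heights $h(V_k)$ and $H_{k,1/2}$ --- which already contains $\ge\delta_m s_k/2$ vertices --- sits inside $\Theta_k(N_k,U_k)$, and after the transformation these vertices lie at height $\le H_{k,1/2}-h(N_k)\le h(U_k)-h(N_k)=\lambda_1^{(k)}$ in $\tilde\Theta_k$. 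By contrast, $\Theta_k(U_k)$ is a single subtree rooted at the random vertex $U_k$; nothing guarantees it has $\Omega(\delta_m s_k)$ vertices. Likewise, your property (b) argument is inverted: the middle piece $\Theta_k(N_k,U_k)$ is \emph{not} small (it contains the giant subtree's lower half); what is small between heights $(2\lambda_1)^{(k)}$ and $\lambda^{(k)}$ in $\tilde\Theta_k$ is the union of (i) the top of subtree 2 above height $\Lambda_{k,1-\epsilon_k}^{(k)}$, which has at most $\epsilon_k s_k$ vertices since the whole tree does there, and (ii) the bottom of subtree 1 (the original tree below $h(N_k)$, now shifted up by $\le\lambda_1^{(k)}$), whose first $\lambda^{(k)}$ generations have at most $\epsilon_k s_k$ vertices because $\Lambda_{k,\epsilon_k}>\lambda$. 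Property (c) then comes from subtree 1 between heights $\lambda^{(k)}$ and $h(N_k)\ge\Lambda_{k,1/2}^{(k)}$ (using $2\lambda_1<\lambda_2$), which carries at least $s_k/2-\epsilon_k s_k$ vertices. Once you swap the roles of subtrees 2 and 3 and redo the height bookkeeping accordingly, the rest of your proof goes through as written.
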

\begin{proof}
Denote the event inside the probability as $B(m,k)$ and refer to Figure \ref{figLambdaIsZero} for visual assistance. 
Consider a uniform random variable $U_k$ on $\{2,\ldots, s_k \}$, independent of $\Theta_k$ (recall that the subsequence $(n(l(k)),k\in \na)$ obtained in Lemma \ref{lemmaLambdaLEpsToLambdaEps} was deterministic). 
We apply the 213 transformation, cutting the tree $\Theta_k$ at vertices $U_k$ and $N_k$ (at distance $\lambda_1^{(k)}$), obtaining the subtrees 1, 2 and 3, and rearranging them as subtrees 2, 1 and 3. 
For the tree $\Theta_k$ and the root of the giant component $V_k$, define the quantities $H_{k,1/2}:=h_{\Theta_k(V_k),1/2}$ and $H_{k,1/4}:=h_{\Theta_k(V_k),1/4}$. 
Let us assume that $H_{k,1/4}<\Lambda_{k,1-\epsilon_k}^{(k)}$ with probability going to one as $k\to \infty$, and postpone the proof to the end. 


	The probability for $U_k$ to fall in $\Theta_k(V_k)$, between heights $\{H_{k,1/2}+1,\ldots,H_{k,1/4}\}$, is at least $\delta_m/4$, by definition of $H_{k,1/4}$. 
	It follows that
	\begin{linenomath}
	\begin{align*}
	& \frac{\delta_m}{4}\varlimsup_k\,\proba{B(m,k)}\\
	& \leq \varlimsup_k\,\proba{B(m,k)\cap \{U_k\in \Theta_k(V_k), h(U_k)\in \{H_{k,1/2}+1,\ldots,H_{k,1/4}\} ,H_{k,1/4}<\Lambda_{k,1-\epsilon_k}^{(k)}\}}.
	\end{align*}\end{linenomath}Denote the event on the right-hand side by $B_1(m,k)$. 
	Note that the ancestor $N_k$ at distance $\lambda_1^{(k)}$ of $U_k$, is also an ancestor of $V_k$. 
	Indeed, we have
	\[
		h(N_k)=h(U_k)-\lambda_1^{(k)}\leq H_{k,1/4}-\lambda_1^{(k)}\leq \Lambda_{k,1-\epsilon_k}^{(k)}-\lambda_1^{(k)}=h(V_k).
	\]
	%
	%
	%
	%
	%
	
	On the set $B_1(m,k)$ the transformed tree $\tilde\Theta_k$ satisfies
	\begin{enumerate}
		\item There are at least $\delta_m s_k/2$ individuals up to height $\lambda_1^{(k)}$. 
		This holds true since,  
		before height $\lambda_1^{(k)} $, 
		$\tilde\Theta_k$ 
		contains the band of the subtree $\Theta_k(V_k)$ between heights  $\{h(V_k),\ldots, H_{k,1/2}\}$  and thus, contains at least $\delta_m s_k/2$ individuals. 
		Indeed
		\[
			H_{k,1/2}-h(N_k) \leq  h(U_k)-h(N_k)=\lambda_1^{(k)}
			\qquad \mbox{and}\qquad
			|\Theta_k(N_k)|(H_{k,1/2})\geq |\Theta_k(V_k)|(H_{k,1/2})\geq \delta_m s_k/2.
		\]
		\item Between heights $\{(2\lambda_1)^{(k)},\ldots, \lambda^{(k)}\}$ there are at most $2\epsilon_ks_k$ individuals. 
		To prove this, note that the transformation lifts the height of the subtree 1. 
		Hence, the maximum number of individuals that $\tilde\Theta_k$ can have in the band between heights $\{(2\lambda_1)^{(k)},\ldots, \lambda^{(k)}\}$, 
		is the sum of the cardinality of such a band of subtree 2 and the band between $\{1,\ldots, \lambda^{(k)}\}$ of subtree 1. 
		The latter has at most $\epsilon_ks_k$ individuals since $\Lambda_{k,1-\epsilon_k}>\lambda$, and the former has at most $\epsilon_k s_k$ individuals since it takes at most $(2\lambda_1^{(k)})$ generations to reach height $\Lambda_{k,1-\epsilon_k}^{(k)}$. 
		Indeed
	\[
	\Lambda_{k,1-\epsilon_k}^{(k)}-h(N_k)\leq h(U_k)+\lambda_1^{(k)}-h(N_k)=(2\lambda_1)^{(k)}
	\]and\[
	|\Theta_k(N_k)|(\Lambda_{k,1-\epsilon_k}^{(k)},\infty)\leq |\Theta_k|(\Lambda_{k,1-\epsilon_k}^{(k)},\infty)\leq \epsilon_ks_k.
	\]
		\item After height $\lambda^{(k)}$ there are at least $s_k/2-\epsilon_ks_k$ individuals.
		This holds since $\tilde\Theta_k$ after height $\lambda^{(k)}$, contains the individuals of subtree 1 between heights $\{\lambda^{(k)},\ldots, h(N_k) \}$. 
		Hence
		\[
			h(N_k)/a_{s_k}\geq \Lambda_{k,1-\epsilon_k}-2\lambda_1 \geq \Lambda_{k,1/2}+\lambda_2-2\lambda_1\geq \Lambda_{k,1/2},
		\]and $\lambda_1^{(k)}\leq \Lambda_{k,\epsilon_k}^{(k)}$ implies
		\[
			C^k(h(N_k)/a_{s_k})-C^k(\lambda)\geq 1/2-\epsilon_k.
		\]
	\end{enumerate}Using the equality in distribution $\Theta_k\stackrel{d}{=}\tilde\Theta_k$ of Proposition \ref{propoIgualdadDistribucionalIntro}, we bound
	\begin{linenomath}
	\begin{align*}
	\frac{\delta_m}{4}\varlimsup_k\,\proba{B_1(m,k)}&\leq \varlimsup_k\,\proba{\tilde{C}^k(2\lambda_1)\geq \delta_m/2,\tilde{C}^k(\lambda)-\tilde{C}^k(2\lambda_1)\leq 2\epsilon_k,1-\tilde{C}^k(\lambda)\geq 1/2-\epsilon_k }\\
	&=\varlimsup_k\,\proba{C^k(2\lambda_1)\geq \delta_m/2,C^k(\lambda)-C^k(2\lambda_1)\leq 2\epsilon_k,1-C^k(\lambda)\geq 1/2-\epsilon_k }\\
	& \leq \proba{C^\Lambda(2\lambda_1)\geq \delta_m/2,C^\Lambda(\lambda)-C^\Lambda(2\lambda_1)=0,1-C^\Lambda(\lambda)\geq 1/2},
	\end{align*}\end{linenomath}which is zero, since $C^\Lambda$ cannot be constant inside an interval where it is strictly increasing.
	This implies $\varlimsup_k\,\proba{B(m,k)}=0$. 
	
	To conclude, we prove
	\[
		\lim_{k\to\infty}\proba{B(m,k)\cap \set{H_{k,1/4}<\Lambda_{k,1-\epsilon_k}^{(k)}} }=1. 
	\]
	Since $\Theta_k$ has less than $\epsilon_k s_k$ individuals after height $\Lambda_{k,1-\epsilon_k}^{(k)}$, the same holds for $\Theta_k(V_k)$ on the event $\{H_{k,1/4}\geq \Lambda_{k,1-\epsilon_k}^{(k)} \}$, and thus
	\[
		s_k\epsilon_k\geq |\Theta_k(V_k)|(H_{k,1/4},\infty)\geq |\Theta_k(V_k)|/4-z_k(V_k,1/2)\geq s_k\delta_m/4-\tilde Z^{k}(H_{k,1/2}).
	\]Therefore, on the mentioned event, from the rescaling of $Z^{k}$, we obtain
	\[
		\delta_m/4-\epsilon_k<(b_{k}/s_k)||Z^k||\leq (b_{k}/s_k)||X^k||,
	\]where the last inequality follows by the time-change relating $Z^k$ and $X^k$. 
	Hence, we bound
	\[
	\varlimsup_k\proba{B(m,k)\cap\{H_{k,1/4}\geq \Lambda_{k,1-\epsilon_k}^{(k)} \}}
	\leq \proba{\varlimsup_k\{\delta_m /4-\epsilon_k\leq (b_{k}/s_k)2||X^k|| \}}
	=0
	\]where the computation of the limit follows since $||X^k||\to ||X||$ and $b_k/s_k\to 0$, 
	by hypothesis. 
\end{proof}

\subsection{\ref{eqnIntersectingAnk}{\color{DarkOrchid}2} equals zero: existence of a vanishingly small giant subtree}
\label{subsecGiantSubtree}

For arbitrary $\epsilon\in (0,1/2)$, we split \ref{eqnIntersectingAnk}{\color{DarkOrchid}2} as
\begin{equation}\tag{C}\label{eqnIntersectingCkSmallerThanOneMinusEpsilon}
\begin{split}
&  \varlimsup_m \varlimsup_k\p\left(\left\{I(1)-I(1/2)\in (\lambda_2,\lambda_2')\right\}\cap A(m,k)^c \right)\\
& \leq  \varlimsup_m \varlimsup_k\p\left(\left\{C^k(\Lambda_{k,1-\epsilon_k}-\lambda_1)<1-\epsilon\right\}\cap A(m,k)^c \right)\\
& + \varlimsup_k\p\left(\left\{ I(1)-I(1/2)\in [\lambda_2,\lambda_2']\right\}\cap   \left\{ C^k(\Lambda_{k,1-\epsilon_k}-\lambda_1)\geq 1-\epsilon\right\} \right),
\end{split}
\end{equation}
and denote by \ref{eqnIntersectingCkSmallerThanOneMinusEpsilon}{\color{DarkOrchid}} the first term on the right.
Similarly as in \ref{eqnLambda_0IsZeroOrInfinity}, we shall prove that \ref{eqnIntersectingCkSmallerThanOneMinusEpsilon}{\color{DarkOrchid}} is zero for fixed $\epsilon$.
Thus
\begin{equation}\tag{D}\label{eqnIntersectingCkSmallerThanOneMinusEpsilon1'}
\begin{split}
& \varlimsup_m \varlimsup_k\p\left(\left\{I(1)-I(1/2)\in [\lambda_2,\lambda_2']\right\}\cap A(m,k)^c \right)\\
& \leq \p\left(\left\{I(1)-I(1/2)\in [\lambda_2,\lambda_2']\right\}\cap \varlimsup_{\epsilon\downarrow 0}\varlimsup_k \left\{ C^k(\Lambda_{k,1-\epsilon_k}-\lambda_1)\geq 1-\epsilon \right\} \right).
\end{split}
\end{equation}
Denote the right-hand side by \ref{eqnIntersectingCkSmallerThanOneMinusEpsilon1'}{\color{DarkOrchid}}.

\textit{\ref{eqnIntersectingCkSmallerThanOneMinusEpsilon1'}{\color{DarkOrchid}} equals zero: 
the last $\lambda_1$ generations have many individuals.}

Let $a\in (0,1/2)$ and consider $0<\lambda_1<\lambda_2$. 
We will prove, 
using Lemma \ref{lemmaLambdaLEpsToLambdaEps}, 
that, even when $\Lambda=\infty$, 
the limit of the (shifted) cumulative profile $C^k(\Lambda_{k,1-\epsilon_k})-C^k(\Lambda_{k,1-\epsilon_k}-\lambda_1)$ will be positive. 
If the limit is called $L$, 
by adding  the terms $\pm C^k(\Lambda_{k,1-\epsilon_k}-\lambda_1)$  to $C^k(\Lambda_{k,1-\epsilon_k}-1/a_{s_k})$
we get
\[
	\varlimsup_{\epsilon\downarrow 0}\varlimsup_k \left\{ 1-\epsilon_k\geq 1-\epsilon+C^k(\Lambda_{k,1-\epsilon_k}-1/a_{s_k})-C^k(\Lambda_{k,1-\epsilon_k}-\lambda_1)\right\} \subset \left\{1\geq 1+L \right\}. 
\]We now prove that $L$ exists and that $\proba{L>0}=1$, so that \textcolor{DarkOrchid}{D} equals zero. 

Consider any fixed $a\in (0,1/2)$ (thus, a continuity point of $X$). 
Let $\Lambda^k=\floor{\Lambda_{k,a\cdot a_{k}}}/a_{k}$ and 
\begin{linenomath}
\begin{equation}\label{defiXtolsubaXsuba}
	X^k_a=(X^k(C^k(\Lambda^k)+v),v\in [0,1-C^k(\Lambda^k)])\ \ \ \ \ \mbox{ and }\ \ \ \ \ X_a=(X(a+v),v\in [0,1-a]).
\end{equation}\end{linenomath}We remarked at the end of Subsection \ref{subsection_SequentialCompactnessOfProfiles}, 
$D_+C_a=X_a\circ C_a$ has a unique solution $C_a$, 
with inverse $I_a$ given by
\[
	I_a(b)=\int_0^b\frac{1}{X_a(s)}\, ds. 
\]Consider also the solution $C^k_a$ to the equation
\[
	C^k_a(t)=\int_0^tX^k_a\circ C^k_a(\floor{ua_{k}}/a_{k})du,
\]which is uniquely obtained by recursion. 
The reader can check that $C^k_a=C^k(\Lambda^k+\cdot)-C^k(\Lambda^k)$. 
From the proof of Lemma \ref{lemmaLambdaLEpsToLambdaEps} we deduce $C^k_a\to C_a$ uniformly on the interval $[0,I(1)-I(1/2)]$. 
From the definition of $C^k$, we get
\begin{linenomath}
\begin{align*}
C^k(\Lambda_{k,1-\epsilon_{k}})-C^k(\Lambda_{k,1-\epsilon_{k}}-\lambda_1) & =C^k_a(\Lambda_{k,1-\epsilon_{k}}-\Lambda^k)-C^k_a(\Lambda_{k,1-\epsilon_{k}}-\Lambda^k-\lambda_1)\\
& \to C_a(I(1)-I(a))-C_a(I(1)-I(a)-\lambda_1).
\end{align*}\end{linenomath}
Note that the above expression is well-defined for $k$ big enough since $a\in (0,1/2)$ and
\[
\Lambda_{k,1-\epsilon_{k}}-\Lambda^k\to I(1)-I(a) \geq I(1)-I(1/2)\geq \lambda_2>\lambda_1,
\]
using Lemma \ref{lemmaLambdaLEpsToLambdaEps} and the definition of \ref{eqnIntersectingCkSmallerThanOneMinusEpsilon1'}{\color{DarkOrchid}}. 
Since $C_a$ is the (strictly increasing) solution IVP($X_a$) (on $[0,I(1)-I(a)]$), then $L>0$.

\textit{\ref{eqnIntersectingCkSmallerThanOneMinusEpsilon}{\color{DarkOrchid}} equals zero: a vertex at height $\lambda_1$ from the top of the tree has many descendants. }

On the event \ref{eqnIntersectingCkSmallerThanOneMinusEpsilon}{\color{DarkOrchid}}, 
we have the bounds $|\Theta_k(v)|<\delta_m s_k$ for the subtree rooted at every vertex $v$ at height $\Lambda_{k,1-\epsilon_k}^{(k)}-\lambda_1^{(k)}$ and $\epsilon <C^k(\infty)-C^k(\Lambda_{k,1-\epsilon_k}-\lambda_1)$. 
We might think of the collection of such subtrees as a forest. 
On \ref{eqnIntersectingCkSmallerThanOneMinusEpsilon}{\color{DarkOrchid}}, 
each of the last $\ceil{\epsilon s_k}$ individuals (of our tree, in breadth-first order) have less than $\delta_m s_k$ descendants. 
If we consider the breadth-first walk of these deterministic quantity of individuals, 
it is given by $\tilde X^k_{\ceil{\epsilon s_k}+\cdot}$ and corresponds to the BFW of a forest with a given degree sequence (albeit a non-extremal one). 
Let us now consider the depth-first walk of this forest $\tilde X^{D,\epsilon, k}$, which has the same law; let $X^{D,\epsilon,k}$ denote its scaled version. 
The size of the subtree rooted at $v$ equals the length of the (sub)excursion of $\tilde X^{D,\epsilon,k}$ above its running minimum:
\[
	L^D_k(v):=\min\{u>v: X^{D,\epsilon,k}_u<X^{D,\epsilon,k}_v \}
	=|\tau^\epsilon_k(v)|/s_k\leq \delta_m
\]for every $v\in [0,\epsilon]$, 
and hence
\begin{linenomath}
\begin{align*}
& \varlimsup_m\varlimsup_k \proba{\left\{ C^k(\Lambda_{k,1-\epsilon_{k}})-C^k(\Lambda_{k,1-\epsilon_{k}-\lambda_1})>\epsilon\right\}\cap A(m,k)^c }
\\ & \leq  \varlimsup_m\varlimsup_k \proba{\left\{\mbox{$L^D_k(v)\leq \delta_m$ for all $v\in [0,\epsilon]$} \right\} }
\\ & =  \varlimsup_m\varlimsup_k \proba{\left\{\mbox{$L_k(v)\leq \delta_m$  for all $v\in [0,\epsilon]$} \right\}}, 
\end{align*}\end{linenomath}where $L_k(v)$ denotes the lengths of excursions of $X^{\epsilon,k}$ above its running minimum. 
Since $X$ is continuous at $1-\epsilon$ almost surely, we know that $X^{\epsilon,k}\to X^{\epsilon}$ in the Skorohod topology, where $X^{\epsilon}_t=X_{1-\epsilon+t}$ (cf. by \cite[Lemma 16.1]{MR1700749}). 
Define the length of a subexcursion of $v$ in $X^{\epsilon}$ by $L(v)$. 
If there exists $v\in [0,\epsilon]$ such that $L(v)>2\delta_m$, then for some $\delta>0$ small and $k$ big enough we can take a subinterval $(g',d')\subset (v,v+L(v))$ such that $d'-g'>\delta_m$ and $X^{D,\epsilon,k}_s\geq X^{D,\epsilon}_{v}+\delta$ for every $s\in (g',d')$. 
This implies 
the existence of $v'$ with $L_k(v')\geq d'-g'>\delta_m$.  
Hence, 
\begin{linenomath}
\begin{align*}
& \varlimsup_m\varlimsup_k \proba{\left\{ C^k(\Lambda_{k,1-\epsilon_{k}})-C^k(\Lambda_{k,1-\epsilon_{k}-\lambda_1})>\epsilon\right\}\cap A(m,k)^c }
\\&\leq  \varlimsup_m \proba{ L(v)\leq \delta_m\text{  for all }v\in [0,\epsilon]}
\\&= \proba{L(v)=0\text{  for all }v\in [0,\epsilon]}. 
\end{align*}\end{linenomath}However, a process with no excursions above its running minimum necessarily coincides with it, 
so that it is of bounded variation. 
Since $X$ has sample paths of unbounded variation, 
we see that $\proba{L(v)=0\text{  for all }v\in [0,\epsilon]}$ which concludes the proof that \ref{eqnIntersectingCkSmallerThanOneMinusEpsilon}{\color{DarkOrchid}}=0.


\section{EI processes and the Lamperti transformation}
\label{section_LampertiForEI}
In this section, we analyze non-triviality and finite-time extinction 
of the Lamperti transformation of the Vervaat transform of EI processes 
stated as Proposition \ref{proposition_CompactnessOfICRT}. 

Let $X^b$ be the EI process given in \eqref{eqnEIP} and let $X$ be its Vervaat transform. 
Note that under any one of the conditions in Proposition \ref{proposition_CompactnessOfICRT}, 
$X^b$ is of unbounded variation so that $X^b$ reaches its minimum uniquely and continuously, 
say at a time $\rho$. (See for example \cite[Thm. 2]{2019arXiv190304745A})
Also, note that the trajectories of $X$ close to zero or one coincide 
with the corresponding the corresponding trajectories of $X^b$ 
after or before it reaches its minimum at $\rho$. 
The proof of Proposition \ref{proposition_CompactnessOfICRT} follows from an analysis of the pre and post-minimum processes of $X^b$, in particular, in the obtention of lower envelopes as in the following proposition. 
Let us write $X$ instead of $X^b$ to lighten notation. 

\begin{proposition}
\label{proposition_EnvelopesForEI}
Let $X$ be an extremal EI process of infinite variation and parameters $(0,\sigma^2,\beta)$, 
where $\beta_i\downarrow 0$. 
Let $\rho$ be the unique instant at which $X$ reaches its minimum. 
Then, there exists $\gamma,\gamma'>1$ such that
\begin{equation}\label{eqnLowerEnvelopEIAtLeftOfInfimum}
\lim_{t\to 0+}\frac{X_{\rho-t}-X_\rho}{t^{1/\gamma}}=\infty,
\end{equation}and 
\begin{equation}\label{eqnLowerEnvelopEIAtRightOfInfimum}
\lim_{t\to 0+}\frac{X_{\rho+t}-X_\rho}{t^{1/\gamma'}}=\infty. 
\end{equation}Otherwise, assume there exists $\alpha\in (1,2)$ such that $\lim_{x\to 0}x^\alpha\overline \beta(x)=\infty$. 
Then, there exists $\gamma>1$ such that \eqref{eqnLowerEnvelopEIAtLeftOfInfimum} holds. 
If furthermore, there exists $\tilde \alpha<  1/(2- \alpha)$ such that $\sum_i \beta_i^{\tilde\alpha}<\infty$, there exists $\gamma'>1$ such that \eqref{eqnLowerEnvelopEIAtRightOfInfimum} holds. 
\end{proposition}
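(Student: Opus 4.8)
\emph{Strategy.} Both bounds concern the fragments of $X$ straddling its infimum: \eqref{eqnLowerEnvelopEIAtLeftOfInfimum} is about the \emph{pre-minimum} fragment, along which $X$ descends onto $X_\rho$, and \eqref{eqnLowerEnvelopEIAtRightOfInfimum} about the \emph{post-minimum} fragment, along which $X$ climbs back up; equivalently, the excursion $V(X)$ coincides near $0$ with the post-minimum fragment and near $1$ with the pre-minimum fragment, so for small $t$ one has $X_{\rho+t}-X_\rho=V(X)_t$ and $X_{\rho-t}-X_\rho=V(X)_{1-t}$. Since $X$ is spectrally positive (its jumps, of sizes $\beta_j$, are all positive), the descent onto the minimum is continuous while the ascent may use the positive jumps; this asymmetry produces the two regimes of the statement, the post-minimum envelope being the more demanding one. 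In each case it suffices to exhibit one admissible exponent. Fixing $\theta\in(0,1)$ and $t_n=\theta^n$, I reduce \eqref{eqnLowerEnvelopEIAtLeftOfInfimum} (for a fixed $\gamma>1$), by comparing $t$ with the grid, to showing that $\sum_n\p\bigl(\inf_{t\in[t_{n+1},t_n]}(X_{\rho-t}-X_\rho)\le M t_n^{1/\gamma}\bigr)<\infty$ for every $M$, and analogously for \eqref{eqnLowerEnvelopEIAtRightOfInfimum}; the fragment decompositions of \cite{2019arXiv190304745A} then let me bound each such probability by the corresponding one for that fragment conditioned to stay positive, the conditioning only helping since it biases the fragment upward.

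\emph{The Gaussian regime.} When the Brownian coefficient is present both envelopes follow from a Bessel$(3)$ comparison: the Gaussian part of $X$, conditioned to stay positive over the relevant fragment, is a time-changed Bessel$(3)$ process, the positive jumps only push the ascent further up, and after time reversal the descent is likewise minorised by a Bessel$(3)$ process. The Dvoretzky--Erd\H{o}s integral test for the lower functions of Bessel$(3)$ at $0$ shows that $t\mapsto t^{1/\gamma}$ is a lower function for every $\gamma\in(1,2)$, so any such $\gamma=\gamma'$ works.

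\emph{The purely discontinuous regime, left envelope.} Assume $\sigma^2=0$ and $x^\alpha\overline\beta(x)\to\infty$ for some $\alpha\in(1,2)$. If $X_{\rho-t}-X_\rho\le M t^{1/\gamma}$ then no jump of size exceeding $M t^{1/\gamma}$ can have occurred in $(\rho-t,\rho]$: a positive jump there at time $u$ would force $X_u\ge X_\rho+(\text{jump size})$, after which $X$ needs strictly positive time to come back down to $X_\rho$. The number of jumps of size $>x$ in a length-$t$ window is stochastically dominated by a $\mathrm{Binomial}(\overline\beta(x),t)$, whence this probability is at most $e^{-t\,\overline\beta(M t^{1/\gamma})}\le\exp\!\bigl(-c\,M^{-\alpha}t^{\,1-\alpha/\gamma}\bigr)$ for $t$ small. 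Choosing $\gamma\in(1,\alpha)$ makes $1-\alpha/\gamma<0$, so the bound is super-exponentially small along $t_n$ and summable; the conditioning on $\rho$ being the global minimum only strengthens the ``no large jump'' event, so \eqref{eqnLowerEnvelopEIAtLeftOfInfimum} follows.

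\emph{The purely discontinuous regime, right envelope --- the main obstacle.} For \eqref{eqnLowerEnvelopEIAtRightOfInfimum} the single-large-jump argument breaks down, because the compensator of the infinitely many small jumps can drag the post-minimum fragment back down at the scale $\sqrt t$, which dwarfs $t^{1/\gamma'}$. Instead, for $a=M t^{1/\gamma'}$ I estimate the probability that the fragment, viewed as $X$ conditioned to stay positive, remains in $[0,a]$ on $[0,t]$ by truncating the jumps at a level $\eta=\eta(t)$: the sub-$\eta$ part is a compensated $L^2$-martingale whose oscillation over $[0,t]$ is $\lesssim\bigl(t\sum_{\beta_j<\eta}\beta_j^2\bigr)^{1/2}$ by Doob's inequality, the super-$\eta$ part contributes only finitely many jumps whose absence has probability $e^{-t\,\overline\beta(\eta)}$, and the conditioning contributes a bounded ratio of renewal functions of the descending ladder. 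The crux is to pick $\eta(t)$ so that both the martingale oscillation and the downward drift of the truncated part stay below $t^{1/\gamma'}$ while $e^{-t\,\overline\beta(\eta)}$ remains summable along $t_n$; this is exactly where the hypothesis $\sum_i\beta_i^{\tilde\alpha}<\infty$ with $\tilde\alpha<1/(2-\alpha)$ enters, for it forces $\sum_{\beta_j<\eta}\beta_j^2\lesssim\eta^{\,2-\tilde\alpha}$, which opens a nonempty window of admissible $(\eta(t),\gamma')$ with $\gamma'\in(1,\alpha)$. I expect this interplay of exponents and the calibration of the truncation level to be the delicate heart of the proof.
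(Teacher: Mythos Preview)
Your approach has two genuine gaps that I do not see how to close as written.

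\textbf{The left-envelope argument is logically broken.} You claim that if $X_{\rho-t}-X_\rho\le Mt^{1/\gamma}$ then no jump of size exceeding $Mt^{1/\gamma}$ occurred in $(\rho-t,\rho]$. But a jump at $u\in(\rho-t,\rho)$ constrains $X_u$, not $X_{\rho-t}$: the time $\rho-t$ is \emph{before} $u$, so $X_{\rho-t}$ can be arbitrarily close to $X_\rho$ regardless of what happens at $u$. The implication you need simply does not hold. (Note also that the time-reversed pre-minimum path $s\mapsto X_{\rho-s}-X_\rho$ has only \emph{negative} jumps, so the presence of positive jumps of $X$ in $(\rho-t,\rho]$ works against, not for, the lower envelope you want.)

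\textbf{The treatment of the random minimum location is not rigorous.} EI processes on $[0,1]$ are not Markov, so statements like ``conditioning to stay positive only helps'' and ``the conditioning contributes a bounded ratio of renewal functions of the descending ladder'' import L\'evy-process machinery that has no direct meaning here. Likewise, the Gaussian part of $X$ is a Brownian \emph{bridge}, not a Brownian motion, and the minimum of the full process $X$ is not the minimum of its bridge component; the Bessel(3) comparison does not apply as stated. The paper handles exactly this difficulty through two reductions you are missing: first, a path transformation (Proposition~\ref{proposition_LemmeFromBertoin}) proving $\dunderline{X}_{\rho+\cdot}-X_\rho\stackrel{d}{=}\overline X\circ d$, which converts the post-minimum envelope into a question about $\overline X$ near $t=0$ (and time reversal gives the analogous statement for the pre-minimum in terms of $-\underline X$); second, Kallenberg's coupling \cite[Thm.~3.27]{MR2161313} writing $X=Y+Z$ with $Y$ a L\'evy process carrying the characteristics $(0,\sigma,\sum_i\delta_{\beta_i})$ and $Z$ an EI remainder with $|Z_t|/t^{1/\gamma}\to 0$ for every $\gamma>1$. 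This reduces both envelopes to the behavior of $\overline Y$ and $-\underline Y$ at $0$ for a spectrally positive L\'evy process, where fluctuation theory (ladder subordinators, Blumenthal--Getoor indices) gives the result; your hypothesis $\sum_i\beta_i^{\tilde\alpha}<\infty$ with $\tilde\alpha<1/(2-\alpha)$ enters there as the bound $\overline\alpha(\tau)\le 1-1/\tilde\alpha<\alpha-1\le\underline\alpha(H)$ on the indices of the ascending ladder time and height. Without the first reduction, your Borel--Cantelli scheme has no way to decouple the event you want from the global location of $\rho$.
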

Of course, Proposition \ref{proposition_CompactnessOfICRT} follows from the above result if $\sigma^2=0$. 
To prove the latter, however, we will need a path transformation that extends Lemme 4 in \cite{MR1141246} for L\'evy processes to the EI setting. 
We will consider the future minimum process $\dunderline{X}$, 
given by $\dunderline{X}_s=\inf\set{X_t:t\geq s}$, 
as well as the right-endpoint of the excursion straddling $t$ given by $d_t=\inf\{s\geq t: X_s=\overline X_s\}$.  
Also, note that there are no restrictions on the sign of the parameters $\beta_i$. 

\begin{proposition}
\label{proposition_LemmeFromBertoin}
Let $X$ be an extremal EI process of parameters $(0,\sigma,\beta)$ of unbounded variation. 
Let $\rho$ be the place the unique minimum of $X$ is achieved. 
Then $\dunderline{X}_{\rho+\cdot}-\dunderline{X}_\rho\stackrel{d}{=}\overline X\circ d$. 
\end{proposition}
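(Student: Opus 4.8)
For an extremal EI process $X$ of parameters $(0,\sigma,\beta)$ of unbounded variation with unique minimum at $\rho$, we have $\dunderline{X}_{\rho+\cdot}-\dunderline{X}_\rho\stackrel{d}{=}\overline X\circ d$.

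The plan is to mimic the classical Lévy-process argument of Bertoin (Lemme 4 in \cite{MR1141246}), working first at the discrete level where the exchangeability of increments makes the identity transparent, and then passing to the scaling limit. So the first step is to set up the discrete analogue: let $W^b$ be the extremal discrete EI process on $[s]$ with increments $d_{\pi_1},\dots,d_{\pi_s}$ for a uniform permutation $\pi$, and let $\rho$ be the index of its first minimum. Here $\dunderline{W}_j=\min_{k\ge j}W_k$ is the future-minimum process and $\overline W_j = \max_{k\le j} W_k$ the past-maximum; the right-endpoint $d_j$ of the excursion straddling $j$ (above the running maximum, i.e.\ the first time after $j$ that $W$ equals its running maximum). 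The discrete claim to prove is that the post-$\rho$ future-minimum process, recentered, has the same law as $j\mapsto \overline{W}_{d_j}$, or more precisely the relevant length-$s$ encoding of it.

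The key step is a deterministic bijection on the space of increment sequences. Given a sequence $x=(x_1,\dots,x_s)$ with $\sum x_i=-1$ (say), reversing time, $\hat x_i = x_{s+1-i}$, turns the future-minimum process of $x$ into (minus) the past-maximum of $\hat x$, and the Vervaat-type cyclic structure means that cutting at the minimum index $\rho$ and reading the post-minimum block is the same, after time reversal, as reading a pre-maximum block. Concretely: the post-$\rho$ path of $W^b$ has the same increments (as a set) as a cyclic shift, and time reversal is a bijection on increment sequences that preserves the uniform-permutation law because the $d_i$ are exchangeable; under this reversal the future-infimum process maps to the past-supremum process and the "ladder" times $d_\cdot$ get interchanged with the future-minimum record times. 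Carrying this out one checks that the reversed, cyclically shifted path realizes exactly $\overline W \circ d$, giving the discrete identity in law. The independence of $\rho$ from the Vervaat transform (noted after Proposition \ref{propoBFWIsAEI}) is what lets the cyclic shift be performed "for free."

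For the passage to the limit, I would invoke Kallenberg's convergence theorem \cite{MR0394842} exactly as in Section \ref{subsection_ConvergengeOfBFWs}: approximate $X$ by discrete extremal EI processes $W^b_n$ with $W^b_n \stackrel{d}{\to} X$, note that unbounded variation forces the minimum to be attained uniquely and continuously (\cite[Thm.~2]{2019arXiv190304745A}), which is the regularity needed for the functionals $\rho$, $\dunderline{X}_{\rho+\cdot}$, $\overline X$ and $d$ to be almost-surely continuous at $X$ on Skorohod space; then the discrete identity passes to the limit. The main obstacle is precisely this continuity/convergence bookkeeping: the maps $x\mapsto \overline x\circ d_x$ and $x\mapsto \dunderline{x}_{\rho_x+\cdot}$ are not continuous on all of Skorohod space, so one must verify that on the (full-measure) set where $X$ has a unique continuously-attained minimum and no fixed-time jumps the composition is continuous, and that the discrete $\rho_n$, $d^{(n)}$ converge appropriately — the jumps of $X$ (the $\beta_i$ terms) make the argument about the excursion endpoints $d_t$ the delicate point, since $d_t$ can jump, and one needs that the limiting process $X$ crosses its running maximum in a way that is stable under Skorohod perturbation. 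This is handled by the same excursion-theory and fluctuation-identity estimates already used for the Vervaat transform's continuity (Lemma 3 of \cite{MR1825153}, \cite{2019arXiv190304745A}), applied now to $X^b$ rather than to its Vervaat transform.
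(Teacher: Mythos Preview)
Your high-level plan (discrete identity, then limit) matches the paper's, but both halves contain real gaps.

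\textbf{The discrete bijection.} Time-reversal alone does not give the identity, and the cyclic/Vervaat structure you invoke plays no role here. If $Y_j=-X_{s-j}$ is the time-reversed path, then $\dunderline{X}_{\rho+j}-\dunderline{X}_\rho=\overline{Y}_{\eta_Y}-\overline{Y}_{\eta_Y-j}$, where $\eta_Y=s-\rho$ is the location of the maximum of $Y$. This is \emph{not} the same as $\overline{Y}\circ d^Y$: the latter records ladder heights in their natural order, whereas the former runs through them in reverse. The paper's transformation is the composition of time-reversal with a second step --- reversing the \emph{order} of the excursions of $Y$ below its running maximum --- and it is precisely this excursion permutation (shown to preserve the law in Lemmas \ref{lemmaBijectionBetweenDiscretePermutationOfEI} and \ref{lemmaExchangeabilityOfExcursions}, using the different-subset-sum property to guarantee uniformity on paths) that converts the reversed ladder-height sequence into the forward one. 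Your sketch is missing this step entirely.

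\textbf{The passage to the limit.} You correctly flag the continuity of $t\mapsto d_t$ as the delicate point, but then defer to ``the same estimates'' used for Vervaat continuity. Those estimates do not settle it. What is needed is that $X$ cannot approach a previous-maximum level without immediately exceeding it, and does not jump from $\{\overline X_t\}$ into $(\overline X_t,\infty)$; these are exactly the hypotheses of the hitting-time continuity lemma in \cite{2019arXiv191009501M}. The paper proves them via a new ingredient: a predictable-mapping argument (Lemma \ref{lemmaOnExchangeabilityAtStoppingTime}) showing that the post-$T$ path of an EI process, spliced with the reversed pre-$T$ path, again has the law of $X$ --- a strong-Markov surrogate which, combined with the fact that infinite-variation EI processes enter $(0,\infty)$ immediately \cite{2019arXiv190304745A}, yields the required regularity at approach times. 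This argument is not contained in the Vervaat-continuity results you cite. Note also that the paper discretises $X$ itself (sampling at $k/n$) to get almost-sure convergence, rather than introducing a separate approximating family via Kallenberg; this avoids having to secure the different-subset-sum property for the approximants.
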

Via the above proposition and the simple inequality $\overline X\circ d \geq \overline X$, 
we recover some comparison results for the sample function growth of \cite{MR606997}, 
since for any increasing $\fun{f}{\re_+}{\re_+}$: 
\[
	\liminf_{t\to 0+} \frac{X_{\rho+t}-X_\rho}{\imf{f}{t}}
	\geq \liminf_{t\to 0+} \frac{\dunderline{X}_{\rho+t}-\dunderline{X}_\rho}{\imf{f}{t}}
	\geq \liminf_{t\to 0+} \frac{\overline X_t}{\imf{f}{t}}. 
\]
On the other hand, since viewing the process at the left of the infimum is the same as viewing the process $-X^R_t=X_{(1-t)-}$ at the right of its infimum, then
\[
\liminf_{t\to 0+} \frac{X_{\rho-t}-X_\rho}{\imf{f}{t}}
\geq \liminf_{t\to 0+} \frac{\sup_{s\leq t} -X^R_s}{\imf{f}{t}}\stackrel{d}{=}\liminf_{t\to 0+} \frac{-\underline X_t}{\imf{f}{t}}.
\]

The importance in translating the problem from the post-minimum process into one near zero is that we can use couplings between EI processes and the more well-known L\'evy processes, as well as the following result for L\'evy processes. 
\begin{proposition}
\label{proposition_LevyEnvelopes}
Let $X$ be a spectrally positive L\'evy process of infinite variation 
with L\'evy measure $\nu$ supported on a compact subset of $\re_+$. 
If $\sigma>0$, then there exists $\gamma,\gamma'>1$ such that
\begin{equation}\label{eqnLowerEnvelopLpAtLeftOfZero}
\lim_{t\to 0+}\frac{\underline X_t}{t^{1/\gamma}}
=-\infty 
\end{equation}
and
\begin{equation}\label{eqnLowerEnvelopLpAtRightOfZero}
\lim_{t\to 0+}\frac{\overline X_t}{t^{1/\gamma'} } 
=\infty.
\end{equation}Otherwise, if there exists $\alpha>1$ such that $\lim_{x\to 0}x^\alpha\imf{\overline\nu}{x}=\infty$ for $\overline \nu(x)=\imf{\nu}{(x,\infty)}$,
then \eqref{eqnLowerEnvelopLpAtLeftOfZero} holds true. 
If furthermore, there exists $\tilde \alpha<1/(2-\alpha)$ such that
	\[
	\int_0^1 x^{\tilde \alpha} \, \imf{\nu}{dx}<\infty,
	\]
then \eqref{eqnLowerEnvelopLpAtRightOfZero} holds true.
\end{proposition}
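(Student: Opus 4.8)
The plan is to establish the two one-sided envelope estimates \eqref{eqnLowerEnvelopLpAtLeftOfZero} and \eqref{eqnLowerEnvelopLpAtRightOfZero} separately, reducing each to the small-time behaviour of an auxiliary subordinator built from $X$, and then to run a Borel--Cantelli argument along a geometric sequence $t_n=c^n$, upgrading to all $t$ by the monotonicity of $\underline X$ (resp.\ $\overline X$). At the outset we may assume $\esp{X_1}=0$: this expectation is finite because $\nu$ is compactly supported, and replacing $X$ by $X-\esp{X_1}\,\id$ alters $\underline X_t$ and $\overline X_t$ by $O(t)=o(t^{1/\gamma})$, hence affects neither statement; after this reduction $X$ oscillates. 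Throughout, $\psi$ denotes the Laplace exponent, $\esp{e^{-\lambda X_t}}=e^{t\psi(\lambda)}$ for $\lambda\ge0$, a finite convex function with $\psi(0)=\psi'(0)=0$ and $\psi(\lambda)\to\infty$, and $\Phi$ its inverse on $[0,\infty)$.

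For \eqref{eqnLowerEnvelopLpAtLeftOfZero}, which involves only $\underline X$, I would use that, because $X$ has no negative jumps, the first-passage process $T_\eps=\inf\set{t\ge0:X_t\le-\eps}$ has no undershoot and is a subordinator with $\esp{e^{-qT_\eps}}=e^{-\eps\Phi(q)}$ (the classical spectrally one-sided first-passage identity applied to $-X$). Since $-\underline X_t=\sup\set{\eps\ge0:T_\eps\le t}$ is the right-continuous inverse of $\eps\mapsto T_\eps$, it suffices to show $\limsup_{\eps\to0+}T_\eps/\eps^{\gamma}=0$ almost surely. For this I would apply, along $\eps_n=2^{-n}$ with $s=\delta\,\eps_n^{\gamma}$, the bound $\proba{T_\eps>s}\le(1-e^{-1})^{-1}\paren{1-e^{-\eps\Phi(1/s)}}\le(1-e^{-1})^{-1}\eps\,\Phi(1/s)$, which is summable once $\Phi(q)\le Cq^{1/\gamma}$ for large $q$, i.e.\ $\psi(\lambda)\ge c\lambda^{\gamma}$ for large $\lambda$. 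This last estimate holds for any $\gamma\in(1,2)$ when $\sigma>0$ (then $\psi(\lambda)\ge\tfrac{1}{4}\sigma^2\lambda^2$ for $\lambda$ large), and for any $\gamma\in(1,\alpha)$ when $\sigma=0$ and $x^{\alpha}\overline\nu(x)\to\infty$: integrating by parts and using $\overline\nu(x)\ge Kx^{-\alpha}$ near $0$ for every $K$, one gets $\psi(\lambda)\ge\lambda\int_{1/\lambda}^{1}x\,\nu(dx)-C\ge K\lambda^{\alpha}-C_K$ for $\lambda$ large. Letting $\delta\downarrow0$ then gives $\limsup_{\eps}T_\eps/\eps^{\gamma}=0$, and monotonicity of $\underline X$ delivers the claim for all $t$.

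For \eqref{eqnLowerEnvelopLpAtRightOfZero} the running supremum is not the inverse of a subordinator --- first passage $\tau^+_\eps=\inf\set{t:X_t\ge\eps}$ can overshoot (the path may jump over $\eps$), so $\eps\mapsto\tau^+_\eps$ is not a subordinator --- and this is both the main obstacle and where the second hypothesis is needed. I would use the ascending ladder decomposition $\overline X_t=H_{L_t}$, with $L$ the local time at the supremum, $\tau=L^{-1}$ the inverse local time, and $H$ the ascending ladder height; by the Wiener--Hopf factorisation for spectrally positive processes (whose descending ladder height is a pure drift, as they creep downward) $\tau$ is a subordinator with Laplace exponent comparable, as $q\to\infty$, to $q/\Phi(q)$, and $H$ one comparable, as $\lambda\to\infty$, to $\psi(\lambda)/\lambda$. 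For any $\ell=\ell(t)>0$ one has $\set{\overline X_t<\eps}\subseteq\set{H_\ell<\eps}\cup\set{L_t<\ell}\subseteq\set{H_\ell<\eps}\cup\set{\tau_\ell\ge t}$, so bounding $\proba{H_\ell<\eps}\le e\,\esp{e^{-H_\ell/\eps}}$ and $\proba{\tau_\ell\ge t}\le(1-e^{-1})^{-1}\esp{1-e^{-\tau_\ell/t}}$ and evaluating the Laplace transforms,
\[
\proba{\overline X_t<\eps}\ \le\ e\,e^{-c\,\ell\,\eps\,\psi(1/\eps)}+\frac{C\,\ell}{t\,\Phi(1/t)}.
\]
Choosing $\ell=\ell(t)$ so the first summand equals $e\,t$ makes it summable along $t_n=c^n$ and leaves the second of order $\log(1/t)/\paren{t\,\Phi(1/t)\,\eps\,\psi(1/\eps)}$ with $\eps=Mt^{1/\gamma'}$, which is summable as soon as $t\,\Phi(1/t)\,\eps\,\psi(1/\eps)$ dominates a negative power of $t$. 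When $\sigma>0$, $\psi(\lambda)\approx\lambda^2$ and $\Phi(q)\approx q^{1/2}$, which gives this for every $\gamma'\in(1,2)$. When $\sigma=0$, I would combine the lower bound $\psi(\lambda)\ge c\lambda^{\alpha}$ above with the upper bound $\psi(\lambda)\le C\lambda^{\tilde\alpha}$ --- from the elementary inequality $e^{-u}-1+u\le u^{\tilde\alpha}$ (valid for all $u\ge0$ when $\tilde\alpha\in(1,2]$, to which one may reduce since $\int_0^1 x^2\,\nu(dx)<\infty$ always) and $\int_0^1 x^{\tilde\alpha}\,\nu(dx)<\infty$ --- hence $\Phi(q)\ge c'q^{1/\tilde\alpha}$; a short computation then gives the required growth exactly when $\gamma'<\tilde\alpha(\alpha-1)/(\tilde\alpha-1)$, and the identity $\tilde\alpha(\alpha-1)/(\tilde\alpha-1)>1\iff\tilde\alpha(2-\alpha)<1$ is precisely why the hypothesis $\tilde\alpha<1/(2-\alpha)$ makes an admissible $\gamma'>1$ available. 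Monotonicity of $\overline X$ again upgrades the subsequential statement, and the reduction $\esp{X_1}=0$ keeps the ascending ladder unkilled so the Laplace identities carry no killing terms.

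The hard part is \eqref{eqnLowerEnvelopLpAtRightOfZero}: the overshoot forces the detour through the ladder local time $\ell$, and a quantitatively useful split requires \emph{simultaneously} a good lower bound on $\psi$ (so $H$ grows fast) and a good upper bound on $\psi$ (so $\tau$ grows slowly, i.e.\ local time at the maximum accumulates fast); the tension between these two estimates is exactly what $\tilde\alpha<1/(2-\alpha)$ resolves, and absent it no algebraic $\gamma'>1$ need exist. The remainder --- assembling the standard Wiener--Hopf and ladder identities for spectrally one-sided processes and checking that the harmless normalisation constants do not interfere with the Borel--Cantelli bookkeeping --- is routine.
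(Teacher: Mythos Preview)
Your approach is essentially the same as the paper's: both proofs reduce \eqref{eqnLowerEnvelopLpAtLeftOfZero} to the small-time behaviour of the first-passage subordinator $\hat\tau$ (your $T_\eps$) with Laplace exponent $\Phi$, and reduce \eqref{eqnLowerEnvelopLpAtRightOfZero} to the ascending ladder pair $(\tau,H)$ with Laplace exponents $c\lambda/\Phi(\lambda)$ and $c\Psi(\lambda)/\lambda$, exploiting the same two-sided estimate $\lambda^{\alpha}\lesssim\Psi(\lambda)\lesssim\lambda^{\tilde\alpha}$ and arriving at the same arithmetic identity $\tilde\alpha(2-\alpha)<1$. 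The only real difference is packaging: the paper invokes the Blumenthal--Getoor indices $\underline\alpha,\overline\alpha$ and their small-time law $\tau_t/t^{1/a}\to 0$ for $a>\overline\alpha(\tau)$, then shows $\overline\alpha(\tau)\le 1-1/\overline\alpha(X)<\alpha-1\le\underline\alpha(H)$ and picks $\gamma,\delta$ in the gap to conclude $\overline X_t/t^{1/\gamma}\ge (H^-_s/s^{1/\delta})\cdot(s^{\gamma/\delta}/\tau_s)^{1/\gamma}\to\infty$; you instead run a direct Borel--Cantelli via the split $\{\overline X_t<\eps\}\subset\{H_\ell<\eps\}\cup\{\tau_\ell\ge t\}$ and optimise over $\ell$. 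Your route is more self-contained (no black-box index theorem), the paper's is slightly cleaner bookkeeping; both buy the same conclusion.

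One small slip to fix in your treatment of \eqref{eqnLowerEnvelopLpAtLeftOfZero}: with $\eps_n=2^{-n}$ and $s=\delta\eps_n^{\gamma}$, the bound $\eps_n\Phi(1/s)\le C\delta^{-1/\gamma}$ obtained from $\Phi(q)\le Cq^{1/\gamma}$ is \emph{constant}, not summable. You need a strict power saving: choose $\gamma'\in(\gamma,\alpha)$ (available since $\gamma<\alpha$), use your own bound $\Psi(\lambda)\ge c\lambda^{\gamma'}$ to get $\Phi(q)\le Cq^{1/\gamma'}$, and then $\eps_n\Phi(1/(\delta\eps_n^{\gamma}))\le C'\eps_n^{\,1-\gamma/\gamma'}$ is indeed summable. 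This is exactly the slack the paper uses implicitly when it writes $\overline\alpha(\hat\tau)\le 1/\alpha<1/\gamma$.
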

Let us apply Propositions \ref{proposition_LemmeFromBertoin} and \ref{proposition_LevyEnvelopes} to prove  Proposition \ref{proposition_EnvelopesForEI}  and then finish by proving the former. 

\begin{proof}[Proof of Proposition \ref{proposition_EnvelopesForEI}] 
Assume first that  all $\beta_i$ have the same sign and that $x^\alpha\overline \beta(x)\to \infty$ for some $\alpha>1$. 

Using that $d_t\geq t$, from Proposition \ref{proposition_LemmeFromBertoin} we have
\begin{equation}\label{eqnInequalityPostInfimumRunningSupremum}
\varliminf_{t\to 0+}\frac{X_{\rho+t}-X_\rho}{f(t)}\geq   \varliminf_{t\to 0+} \frac{\overline{X}_t}{f(t)}. 
\end{equation}From Theorem 3.27 in \cite{MR2161313}, 
we can write $X=Y+Z$, where $Y$ is a L\'evy process with characteristics $(0,0,\sum_i \delta_{\beta_i})$
and $Z$ is an exchangeable increment process 
whose jumps $\tilde \beta$ satisfy $\sum_i  |\tilde\beta_i|^{\gamma}<\infty$ for every $\gamma>1$. 
Thanks to Theorem 2.32 item (i) in the same reference, $\abs{Z_t}/t^{1/\gamma}\to 0$ almost surely for every $\gamma>1$. 
Also, our hypothesis on $\alpha$ implies that the L\'evy measure of $Y$ satisfies the hypothesis of Proposition \ref{proposition_LevyEnvelopes} and that therefore, there exists $\gamma>1$ such that $\lim_{t\to 0}\overline{Y}_t/t^{1/\gamma}=\infty$ for some $\gamma>1$. 
We conclude that
\[
	\liminf_{t\to 0}\frac{\overline X_t}{t^{1/\gamma}}
	=\liminf_{t\to 0}\frac{\overline Y_t}{t^{1/\gamma}}
	=\infty.\qedhere
\]
\end{proof}

We continue the proof of Proposition \ref{proposition_LevyEnvelopes} by
dealing with lower envelopes for the pre and post minimum processes 
of a L\'evy process with no negative jumps. 
The tools require fluctuation theory of L\'evy processes 
as presented in \cite{MR1406564}, \cite{MR2320889} and \cite{MR3155252}. 
It also requires the indices of Blumenthal and Getoor for subordinators. 
Given any subordinator $\tau$ with Laplace exponent $\Phi$, 
we define the upper and lower indices according to Blumenthal and Getoor 
(See Theorem 6.1 in \cite{MR0123362}, there called $\beta$ and $\sigma$)  
as follows
\[
	\overline\alpha(\tau)
	=\inf\set{\alpha>0: \lim_{\lambda\to\infty}\lambda^{-\alpha}\imf{\Phi}{\lambda}=0}
	\dispand
	\underline\alpha(\tau)=\sup\set{\alpha>0: \lim_{\lambda\to\infty}\lambda^{-\alpha} \imf{\Phi}{\lambda}=\infty}.
\]and recall that they satisfy 
$0\leq \underline \alpha(\tau)\leq \overline \alpha(\tau)\leq 1$ and that, whenever its dfirt is zero, 
\[
	\lim_{t\to 0} t^{-1/\alpha}\tau_t=\infty 
	\text{ if }
	\alpha<\underline \alpha(\tau)
	\text{ and }
	\lim_{t\to 0} t^{-1/\alpha}  \tau_t=0
	\text{ if }
	\alpha>\overline \alpha(\tau). 
\]
(cf. \cite[Thms. 3.1, 6.1 and 6.2]{MR0123362}).

\begin{proof}[Proof of Proposition \ref{proposition_LevyEnvelopes}]
Let $X$ be a spectrally positive L\'evy process with parameters $(0,0,\nu)$. 
Note that the conclusions we want to establish do not depend on \emph{large} jumps. 
Hence, we assume that  the L\'evy measure $\nu$ is supported on $[0,1]$. 

Our hypothesis with $\alpha$ implies that $X$ is of infinite variation ($\int_0^1 x\, \imf{\nu}{dx}=\infty$); 
thanks to a celebrated theorem of Rogozin, 
we know that $\limsup_{t\to 0+} X_t/t=\limsup_{t\to 0+} -X_t/t=\infty$ 
and therefore $0$ is regular for both half-lines $(0,\infty)$ and $(-\infty,0)$ 
(cf. \cite{MR0242261} or \cite[Thm. 1]{2019arXiv190304745A}). 

The Laplace exponent $\Psi$ of $X$ 
(which satisfies $\esp{e^{-\lambda X_t}}=e^{t\imf{\Psi}{\lambda}}$)  
 is given by
\[
	\imf{\Psi}{\lambda}=\int_0^1 (e^{-\lambda x}-1+\lambda x)\, \imf{\nu}{dx}.
\]In the case $\sigma>0$, for $\gamma\in (1,2)$ we have $\lambda^{-\gamma}\imf{\Psi}{\lambda}\geq \sigma^2\lambda^{2-\gamma}/2\to \infty$. When $\sigma=0$, we now prove the same convergence for $\gamma\in (1,\alpha)$. 
We will denote by $\Phi$ the right-continuous inverse of $\Psi$ given by $\imf{\Phi}{\lambda}=\inf\set{u\geq 0: \imf{\Psi}{u}>\lambda}$.  $\Phi$ appears naturally when considering the running maximum of $-X$. 
Indeed, if $\hat \tau_t=\inf\set{s: -X_s>t}$, then $\hat \tau_t$ is a subordinator with Lapace exponent $\Phi$. 
From our assumption $x^\alpha\overline\nu(x)\to \infty$ as $x\to 0$. 
Given $\gamma\in (1,\alpha)$ and $k>0$, 
choose $\eps>0$ such that $\imf{\overline \nu}{y}\geq k y^{-\gamma}$ if $y\in (0,\eps)$.
For $x>0$ define\[
	\imf{\barbare{\nu}}{x}=\int_x^1 \imf{\overline \nu}{x}\, dx. 
\]Then, for any $x\in (0,\eps)$, 
\[
	\imf{\barbare{\nu}}{x}
	=\int_x^1 \imf{\overline \nu}{x}\, dx
	\geq \int_x^\eps \imf{\overline \nu}{x}\, dx
	\geq\frac{k}{\gamma-1}\bra{x^{-(\gamma-1)}-\eps^{-(\gamma-1)}}. 
\]Use Fubini's theorem to write the Laplace exponent $\Psi$ of $X$ as follows
\[
	\imf{\Psi}{\lambda}
	=\int_0^1 \bra{e^{-\lambda x}-1+\lambda x}\, \imf{\nu}{dx}
	=\lambda \esp{\imf{\barbare{\nu}}{T/\lambda}},
\]where $T$ is a standard exponential random variable independent of $X$. 
Then, 
\[
	\lambda^{-\gamma} \imf{\Psi}{\lambda}
	\geq \lambda^{-(\gamma-1)} \esp{\imf{\barbare{\nu}}{T/\lambda}\indi{T/\lambda\leq \eps}}
	\geq \lambda^{-(\gamma-1)} \frac{k}{\gamma-1}\esp{\bra{\paren{\lambda/T}^{\gamma-1}-\eps^{-(\gamma-1)}}\indi{T/\lambda\leq \eps}}
\]Considering the inferior limit as $\lambda\to\infty$, the second summand disappears, and we get: 
\[
	\liminf_{\lambda\to\infty} \lambda^{-\gamma} \imf{\Psi}{\lambda}
	\geq \frac{k}{\gamma-1}\imf{\Gamma}{2-\gamma}.
\]Since the above is valid for any $k>0$, joining the two cases we deduce that
\begin{equation}
\label{eqn_lowerindexH}
	\lim_{\lambda\to\infty} \lambda^{-\gamma} \imf{\Psi}{\lambda}=\infty \text{ for any }\gamma\in (1,\alpha\wedge 2) . 
\end{equation}It therefore follows that 
\[
	\lim_{\lambda\to\infty}\lambda^{-1/\gamma} \imf{\Phi}{\lambda}=0. 
\]for any $\gamma\in (1,\alpha)\subset (1,2)$. 
In terms of indices, we see that $\imf{\overline \alpha}{\hat\tau}\leq 1/\alpha<1$. 
Now, under our assumptions, the drift of $\hat\tau$ is zero. 
Indeed, if $\tilde \Phi$ is the Laplace exponent of any subordinator, the corresponding drift is given by $\lim_{\lambda\to\infty} \imf{\tilde \Phi}{\lambda}/\lambda$. 
Note that the hypotheses $\underline \alpha(X)>1$ or $\sigma>0$ implies that 
\[
\frac{1}{\lambda}\imf{\Psi}{\lambda}
=\frac{\sigma^2}{2}\lambda+\int_0^1 \bra{1-e^{-\lambda x}} \imf{\overline\nu}{x}\, dx
\to \infty\qquad \mbox{ as }\lambda\to \infty.
\]
Hence, the drift of $\hat\tau$ equals
\[
\lim_{\lambda\to\infty} \frac{\imf{\Phi}{\lambda}}{\lambda}
=\lim_{\lambda\to\infty} \frac{\lambda}{\imf{\Psi}{\lambda}}
=0.
\]

From Theorem 3.1 in \cite{MR0123362}, we deduce that
\[
	\lim_{t\to 0} \frac{\hat\tau_t}{t^{\gamma}}=0. 
\]for $\gamma\in (1,\alpha)\subset (1,2)$. $-\underline X$ and $\hat\tau$ satisfy the following: if $-\underline X_t=y$ then $\hat\tau_y\geq t$. Hence
\[
	\liminf_{t\to 0}\frac{-\underline X_t}{t^{1/\gamma}}
	\geq \liminf_{y\to 0+}\frac{y}{{\hat\tau_y}^{1/\gamma}}=\infty, 
\]which settles the first part of the proposition.

Let us pass to the second part. 
Let $\overline X$ be the running maximum process  of $X$ given by $\overline X_t=\sup_{s\leq t} X_s$. 
It is well known that the reflected process $\overline X-X$ is a Feller process (cf. \cite[Prop. 1, Ch. VI]{MR1406564}); regularity of $0$ for both half-lines implies that $0$ is a regular and instantaneous state for the reflected process  
so that we can define local times at zero, denoted $L$. 
Then, the upward process,  denoted $(\tau,H)$, is defined by 
\[
	(\tau,H)=(L^{-1}, X\circ L^{-1});
\]it allows one to sample the L\'evy process $X$ only at the times at which it achieves a new maximum. 
It is a two-dimensional subordinator. 
Since $X$ is spectrally positive,  
the Laplace exponent of $(\tau,H)$ can be obtained as follows: 
\[
	-\frac{1}{t}\log\esp{e^{-\lambda \tau_t -\mu H_t}}
	=-c\frac{\imf{\Psi}{\mu}-\lambda}{\imf{\Phi}{\lambda}-\mu}
\]where $c$ is a constant depending on the normalization of the local time $L$. (See for example formulae 9.2.8 and 9.2.9 in \cite{MR2320889}.) 

In particular, since $X$ is spectrally positive, 
the Laplace exponents of $L^{-1}$ and $H$ are given by
\begin{linenomath}
\begin{align*}
	-\frac{1}{t}\log\esp{e^{-\lambda \tau}}&= \frac{c\lambda}{\imf{\Phi}{\lambda}} 
	&\text{and}&
	&-\frac{1}{t}\log\esp{e^{-\mu H_t}}&= \frac{c\imf{\Psi}{\mu}}{\mu}. 
\end{align*}\end{linenomath}
To obtain the drifts of $\tau$ and $H$, 
first note that $\imf{\Psi}{\lambda}\to \infty$ as $\lambda\to\infty$, and thus  $\imf{\Phi}{\lambda}\to \infty$ as $\lambda\to\infty$. 
Hence, 
\[
\lim_{\lambda\to\infty} \frac{c\lambda}{\imf{\Phi}{\lambda}\lambda}=0
\]so that the drift of $\tau$ is zero. 
On the other hand, recall that $\lim_{\mu\to\infty}\mu^{-2} \imf{\Psi}{\mu}$ exists and equals the Gaussian coefficient of $X$ (see \cite[Proposition 2, Chapter 1.1]{MR1406564}). 
We deduce that the drift of $H$ equals $\sigma^2/2$.

Let us now obtain bounds on  the upper Blumenthal-Getoor indices for $\tau$. 
The upper index for $X$ has a similar definition to that of subordinators, but in terms of its Laplace exponent $\Psi$; 
it will be denoted by $\overline \alpha(X)$ in terms of our hypotheses, $\overline \alpha(X)\leq \tilde \alpha<1/(2-\alpha)$. 
Note that if $\delta<1$ and $1/(1-\delta)>\overline \alpha(X)$ then
\[
	\lim_{\lambda\to\infty}\frac{\lambda}{\lambda^{\delta}\imf{\Phi}{\lambda}}
	=\lim_{\lambda\to\infty}\bra{\frac{\imf{\Psi}{\lambda}}{\lambda^{1/(1-\delta)}}}^{1-\delta}
	=0.
\]We conclude that $\overline \alpha(\tau)\leq 1-1/\overline\alpha(X)$. 
Finally, for $H$, note that if $\delta\in (0,\alpha-1)$ then we have proved in \eqref{eqn_lowerindexH} that 
\[
	\lim_{\lambda\to\infty}\frac{\imf{\Psi}{\lambda}}{\lambda^{\delta}\lambda}=\infty. 
\]We conclude that $ \alpha-1\leq\underline \alpha(H)$.

Let us now show that under our hypotheses, $\overline\alpha(\tau)<\underline\alpha(H)$. 
Indeed, we have shown that $\overline \alpha(\tau)\leq 1-1/\overline\alpha(X)$ and $ \alpha-1\leq\underline \alpha(H)$. 
However, our hypothesis
\[
	\overline\alpha(X)<\frac{1}{2-\alpha}
\]implies $1-1/\overline \alpha(X)< \alpha-1$. 
Let $L=\tau^{-1}$. 
Recall that, from Lemme 4 in \cite{MR1141246} that the future minimum of the post-minimum process of $X$ has the same law as $H\circ L$ 
and that 
$\overline X=H^{-}\circ L$. 
We can therefore find $\gamma>1$ and $\delta\in (0,1)$ such that $\gamma\overline \alpha(\tau)<\delta<\underline \alpha(H)$. 
Therefore
\[
	\lim_{t\to 0}\frac{H_t}{t^{1/\delta}}=\lim_{t\to 0}\frac{H^-_t}{t^{1/\delta}}=\infty, 
	\quad
	\lim_{t\to 0}\frac{\tau_t}{t^{\gamma/\delta}}=0
\]and then (reasoning as with $-\underline X$ and $\hat \tau$ above)
\[
	\lim_{t\to 0}\frac{\overline X_t}{t^{1/\gamma}}
	=\lim_{t\to 0}\frac{H^-\circ L_t}{t^{1/\gamma}}	
	\geq \lim_{t\to 0}\frac{H^-_t}{\tau_t^{1/\gamma}}
	=\lim_{t\to 0}\frac{H^-_t}{t^{1/\delta}}\bra{\frac{t^{\gamma/\delta}}{\tau_t}}^{1/\gamma}
	=\infty. \qedhere
\]

Finally, in the case $\sigma>0$ we have $\overline X_t\geq \sigma^2 L_t/2$. 
Note that above we obtained the bound $\overline{\alpha}(\tau)\leq 1-1/\overline{\alpha}(X)$ using only the definitions of $\overline{\alpha}(\tau)$ and $\overline{\alpha}(X)$. 
Since $\overline{\alpha}(X) \leq 2$, we have $\overline{\alpha}(\tau)\leq 1/2$. 
Thus, for any $\gamma>1$ such that $\overline{\alpha}(\tau)<1/\gamma$, similarly as before 
\[
\lim_{t\to 0}\frac{\overline X_t}{t^{1/\gamma}}
\geq \frac{\sigma^2}{2} \lim_{t\to 0}\bra{\frac{t^{\gamma}}{\tau_t}}^{1/\gamma}
=\infty. \qedhere
\]
\end{proof}

\subsection{EI processes after their minimum} 
\label{subsection_PathTransformationEIPostMin}
The objective of this subsection is to prove Proposition \ref{proposition_LemmeFromBertoin} and therefore finish the proof of Propositions \ref{proposition_EnvelopesForEI} and \ref{proposition_CompactnessOfICRT}. 
We do this by first showing the result in discrete time and then passing to the limit. 
In discrete time, we first prove that the excursions below the running maximum are reversible, 
which allows us to prove a path transformation that explains Proposition \ref{proposition_LemmeFromBertoin} in the discrete time-case. 
Finally, we tackle some technical results which allow us to pass to the limit. 

Let $W$ be a discrete time EI process on $[n]$ 
as introduced in Section \ref{section_EIProcesses}. 
As in that section, we will mainly work with extremal EI processes $W$; 
recall that these are built from a sequence of increments,  say $x=(x_i)_{i=1}^n$. 
Also, the possible trajectories of $W$ are
\[
	\mc{P}=\set{x^\sigma: \sigma\text{ is a permutation of } [s]}. 
\]In general, the law of $W$ on $\mc{P}$ is uniform, 
except when different permutations give rise to the same trajectory. 
Hence, the law of $W$ is uniform on $\mc{P}$ when $x$ has the \emph{different subset sum property} 
($\sum_{i\in I_1} x_i\neq \sum_{i\in I_2} x_i$ if $I_1$ and $I_2$ are different subsets of $[s]$) 
and in particular when $x$ is a sample from a non-atomic distribution. 
We will use uniformity in establishing the invariance of $W$ under path-transformations by using bijections between the set $\mc{P}$. 
Also, general EI processes can be thought of as extremal ones with the jumps $x$ taken at random. 
In particular, random walks correspond to when $x$ is a sample from a given distribution. 

We will be interested in the \emph{(complete) excursions} of $W$ under its running maximum $\overline W$ 
given by $\overline W_j=\max_{i\leq j} W_i$. 
To define them formally, consider the (random) set $\mc{Z}=\set{i\in [n]_0: X_i=\overline X_i}$, 
where $[n]_0=\{0,1,\ldots, n \}$. 
If $\mc{Z}=\set{0=I_0<I_1<\cdots<I_K\leq n}$, 
we define the excursion intervals of $W$ under $\overline W$ as $[I_{j-1},I_{j})$ for $1\leq j\leq K$. 
($K=0$ if $X_n<\overline X_n=0$, in which case there are no excursion intervals.) 
	
Let $\mc{B}_k$ be the set of paths $w:[n]_0\mapsto \re$ in $\mc{P}$  having $k$ excursions below its maximum, 
for $k\in [n]_0$. 
We now  prove that the excursions of $X$ below its maximum are exchangeable.

\begin{lemma}\label{lemmaBijectionBetweenDiscretePermutationOfEI}
For any $k\in [n]$, let $\sigma$ be a permutation on $[k]$. 
For any $w\in \mc{B}_k$, let $\phi_{\sigma}(w):=\tilde{w}$ be the transformation that permutes the $k$ excursions of $w$ below its maximum by means of $\sigma$. 
Then, the transformation $w\mapsto \tilde w$ is a bijection. 
%
\end{lemma}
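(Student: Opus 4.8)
The plan is to realize $\phi_{\sigma^{-1}}$ as a two-sided inverse of $\phi_\sigma$ on $\mc{B}_k$; the only genuine work is to check that $\phi_\sigma$ maps $\mc{B}_k$ into itself.

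First I would record the block decomposition underlying the transformation. Given $w\in\mc{B}_k$, write its record set as $\mc{Z}=\set{0=I_0<I_1<\cdots<I_k\le n}$ and split the increment sequence of $w$ into the $k+1$ consecutive blocks $B_j=(\Delta w_{I_{j-1}+1},\ldots,\Delta w_{I_j})$ for $1\le j\le k$ together with the (possibly empty) trailing block $B_{k+1}=(\Delta w_{I_k+1},\ldots,\Delta w_n)$; by construction $\phi_\sigma(w)$ is the path whose increment sequence is the concatenation $B_{\sigma(1)},\ldots,B_{\sigma(k)},B_{k+1}$, and since this merely reshuffles increments it again lies in $\mc{P}$. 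The heart of the argument is the structural description of these blocks, which I would derive directly from the fact that $\mc{Z}$ lists \emph{every} record time: for $I_{j-1}<i<I_j$ one has $w_i<\overline w_i=w_{I_{j-1}}$, whereas $w_{I_j}=\overline w_{I_j}\ge w_{I_{j-1}}$, so the partial sums of $B_j$ started from $0$ are strictly negative except the last, which equals $h_j:=w_{I_j}-w_{I_{j-1}}\ge 0$; in other words each $B_j$ is a segment whose running sum first returns to (or above) its initial level exactly at the final step (a weak ascending ladder step). Likewise all partial sums of $B_{k+1}$ started from $0$ are strictly negative when $B_{k+1}\ne\varnothing$.

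Running through the reordered path, one then sees that after the first $i$ blocks it sits at level $w_0+h_{\sigma(1)}+\cdots+h_{\sigma(i)}$, that throughout the interior of its $i$-th block it stays strictly below that block's starting level (which is precisely the running maximum there), and that it attains a new running maximum exactly at each block endpoint. Since $\sum_j h_j$ is permutation invariant, the level at which $B_{k+1}$ is grafted, and hence the record-free behaviour of $B_{k+1}$, is unchanged; consequently $\phi_\sigma(w)$ has exactly $k$ excursions below its maximum, occupying the reordered blocks, so $\phi_\sigma(w)\in\mc{B}_k$. Finally, inspecting the action on blocks gives $\phi_\tau\circ\phi_\sigma=\phi_{\sigma\circ\tau}$ on $\mc{B}_k$ for permutations $\sigma,\tau$ of $[k]$, whence $\phi_{\sigma^{-1}}\circ\phi_\sigma=\phi_\sigma\circ\phi_{\sigma^{-1}}=\phi_{\id}=\id$, so $w\mapsto\tilde w$ is a bijection of $\mc{B}_k$ onto itself. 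I expect the structural claim about the blocks to be the only delicate step: it is what guarantees that concatenating such segments in any order neither manufactures spurious records inside a block nor loses the records at the block boundaries; everything else is bookkeeping, together with care about composition order in the last display.
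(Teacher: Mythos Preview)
Your proof is correct and takes the same approach as the paper: exhibit $\phi_{\sigma^{-1}}$ as the two-sided inverse of $\phi_\sigma$. The paper's proof is a single line, ``Note that $\phi_{\sigma^{-1}}\circ\phi_\sigma=\phi_\sigma\circ\phi_{\sigma^{-1}}=\id$,'' whereas you carry out explicitly the verification that $\phi_\sigma$ preserves $\mc{B}_k$ (via the block structure of the excursions) and the composition rule $\phi_\tau\circ\phi_\sigma=\phi_{\sigma\circ\tau}$; these are precisely the facts the paper is tacitly using, so your version is simply a fleshed-out variant of the same argument.
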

\begin{proof}
Note that $\phi_{\sigma^{-1}}\circ \phi_{\sigma}=\phi_{\sigma}\circ \phi_{\sigma^{-1}}=\id$. 
	\end{proof}
	
Now we define a transformation that permutes the excursions below the maximum of $W$, and prove that the distribution remains unchanged.
	
\begin{lemma}\label{lemmaExchangeabilityOfExcursions}
Let $X$ be an extremal EI process based on a deterministic difference sequence $x=(x_i)_{i=1}^n$ satisfying the different subset sum property. 
Fix deterministic permutations $(\sigma_k,k\in [n]_0)$ where $\sigma_k\in S_k$, 
and define $\phi(W)$ as $\phi_{\sigma_k}(W)$  if $W\in\mc{B}_k$. 
%
Then, we have the equality in distribution
\[
	W\stackrel{d}{=}\phi(W).
\]
\end{lemma}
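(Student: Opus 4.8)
The plan is to deduce the identity in law from the elementary fact that a bijection of a finite set carries the uniform law to itself: I would show that $\phi$ is a well-defined bijection of the finite set $\mc{P}$ of admissible trajectories, on which $W$ is uniformly distributed, so that $\phi(W)$ is again uniform on $\mc{P}$ and hence $W\stackrel{d}{=}\phi(W)$. First I would record that $W$ is indeed uniform on $\mc{P}$. Since $W=x^\pi$ for a uniform permutation $\pi$ of $[n]$ and $x$ has the different subset sum property, the map $\sigma\mapsto x^\sigma$ is injective — equality of two trajectories would force a coincidence of two distinct subset sums of $x$ — so $\abs{\mc{P}}=n!$ and the uniform law of $\pi$ pushes forward to the uniform law on $\mc{P}$. (This is already noted in the text preceding the lemma; I include it for completeness.)

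Next I would verify that $\phi$ is a bijection of $\mc{P}$. The key structural fact is that $\set{\mc{B}_k:k\in[n]_0}$ is a partition of $\mc{P}$: every $w\in\mc{P}$ has a uniquely determined running maximum $\overline w$, hence a uniquely determined set $\mc{Z}(w)$ and a uniquely determined number $k(w)$ of complete excursion intervals below $\overline w$, so $w$ lies in exactly one $\mc{B}_k$. For each $k$, Lemma \ref{lemmaBijectionBetweenDiscretePermutationOfEI} gives that $\phi_{\sigma_k}$ is a bijection of $\mc{B}_k$ onto itself with inverse $\phi_{\sigma_k^{-1}}$; in particular rearranging the complete excursions below the maximum keeps the multiset of increments equal to $\set{x_1,\ldots,x_n}$ (so the image stays in $\mc{P}$) and keeps the number of complete excursions equal to $k$ (so the image stays in $\mc{B}_k$). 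Gluing these maps across $k$, the transformation $\phi$ with $\phi(w)=\phi_{\sigma_{k(w)}}(w)$ is injective on each block, the blocks being disjoint and individually preserved, and surjective because each $\phi_{\sigma_k}$ is onto $\mc{B}_k$; hence $\phi$ is a bijection of $\mc{P}$. Combining this with the uniformity of $W$ on $\mc{P}$ finishes the proof.

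The one point that needs care — and which is really the content hidden inside Lemma \ref{lemmaBijectionBetweenDiscretePermutationOfEI} — is the claim that permuting the complete excursions below the running maximum preserves the class $\mc{B}_k$, i.e. preserves the number of such excursions and the ladder structure. This holds because a complete excursion below the maximum, read relative to its starting height, is a block of increments all of whose proper partial sums are strictly negative and whose total (the overshoot) is nonnegative; concatenating such blocks in any order again produces exactly that many complete excursions below the maximum, with the successive ladder heights equal to the partial sums of the overshoots and the terminal maximum equal to their total, a quantity that is manifestly invariant under reordering. Everything else is bookkeeping, so I expect this ladder-structure verification to be the only mildly delicate step.
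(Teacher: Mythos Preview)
Your proposal is correct and follows essentially the same route as the paper: both arguments use that the different subset sum property makes $W$ uniform on $\mc{P}$, that the $\mc{B}_k$ partition $\mc{P}$, and that each $\phi_{\sigma_k}$ is a bijection of $\mc{B}_k$ (Lemma \ref{lemmaBijectionBetweenDiscretePermutationOfEI}), whence $\phi$ is a bijection of $\mc{P}$ and pushes the uniform law to itself. The paper writes this out as the one-line probability computation $\proba{\phi(W)=w}=\proba{W=\phi_{\sigma_k}^{-1}(w)}=1/n!$, while you phrase it as ``bijection preserves uniform''; your extra paragraph on why rearranging excursions preserves the ladder structure is a useful unpacking of what Lemma \ref{lemmaBijectionBetweenDiscretePermutationOfEI} is asserting, but the paper simply takes that as given.
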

\begin{proof}
Note that $\phi$ is a random bijection of $\mc{P}$, but equals the deterministic bijection $\phi_{\sigma_k}$ if $W\in \mc{B}_k$. 
Since $\phi$ does not change the number of excursions below the maximum, we have that $\phi(W)\in \mc{B}_k$ if and only if $W\in \mc{B}_k$. 
Fix $k\in [n]_0$ and $w\in \mc{B}_k$. 
Then
\begin{linenomath}
\begin{align*}
\proba{\phi(W)=w}
=\proba{\phi_{\sigma_k}(W)=w}
=\proba{W=\phi_{\sigma_k^{-1}}(w)}
=\frac{1}{n!}.
\end{align*}\end{linenomath}The last equality follows because, thanks to the different subset sum property, $W$ is uniform on $\mc{P}$. 
Therefore $\phi(W)$ is also uniform on $\mc{P}$ and hence equal in law to $W$. 

	\end{proof}
	
We can now obtain Proposition \ref{proposition_LemmeFromBertoin}, by applying the above result with the particular permutations $\sigma_k$ that just reverse the order of the excursions below the minimum. 
Let us see how this is true for discrete time EI processes and then how to pass to the limit.

\begin{proposition}\label{propoEqualityDiscretePostInfimumAndXCircD}
Let $X$ be a discrete time extremal EI process on $[0,1]$, having jumps at times $j/n$, for $j\in [n]_0$ constructed from the deterministic jump sequence $x=(x_i)_{i=1}^n$ which satisfies the different subset sum property. 
Then  $X$ reaches its minimum at a unique time $\rho\in [0,1)$,  its supremum at a unique time $\eta\in (0,1]$ and
	\begin{equation}\label{eqnDiscreteEqualityPostInfimumAndXCircD}
	\paren{\underline{\underline{X}}_{\rho+j/n}-\underline{\underline{X}}_{\rho}}_{0\leq j\leq n-n\rho}\stackrel{d}{=}\paren{\overline{X}\circ d(j/n)}_{0\leq j\leq n\eta }.
	\end{equation}
\end{proposition}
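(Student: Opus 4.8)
The plan is to reduce the identity \eqref{eqnDiscreteEqualityPostInfimumAndXCircD} to the exchangeability of the excursions of $X$ below its running maximum, established in Lemma \ref{lemmaExchangeabilityOfExcursions}. First I would record the a.s.\ uniqueness of the minimizer $\rho$ and the maximizer $\eta$: since $x$ has the different subset sum property, all $2^n$ partial sums over index subsets are distinct, so no two prefix sums agree and hence the argmin and argmax are unique. This also implies $X_0=0$ is not the minimum (so $\rho\in[0,1)$) and $X_n=-1<0=X_0$ forces $\eta\in(0,1]$ but $\eta\ne 0$.

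Next I would unwind both sides combinatorially. Consider the running-maximum excursion decomposition of $X$ on $[0,n]_0$: the return-to-maximum times $0=I_0<I_1<\cdots<I_K$, the excursion intervals $[I_{j-1},I_j)$, and a final (possibly empty) incomplete excursion after $I_K$ during which $X$ stays strictly below its maximum and ends at $X_n=-1$. The right-hand side $\overline X\circ d$ evaluated up to the supremum time $\eta$ records, as $t$ runs over $[0,\eta]$, the successive values $\overline X_{I_j}$ attained at the right ends of the completed excursions — it is a pure-jump nondecreasing step function whose jump sizes are exactly the "overshoots'' $\overline X_{I_j}-\overline X_{I_{j-1}}$, listed in the order $j=1,\dots,K$, and it is constant between those times with the constancy intervals having lengths $I_j-I_{j-1}$. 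On the left-hand side, the future-infimum process $\dunderline X_{\rho+\cdot}-\dunderline X_\rho$ is also nondecreasing and pure-jump: reading $X$ backwards from $\rho$ (equivalently applying the time-reversal $\widehat X_t = X_{n} - X_{(n-t)-}$, which is again an extremal EI process with the same multiset of increments), the points where $X$ achieves a new future-infimum correspond to returns to the running maximum of $\widehat X$, and the jump sizes of $\dunderline X$ are the corresponding overshoots, in reverse order, with the plateau lengths being the reversed excursion lengths.

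The key step is then: applying Lemma \ref{lemmaExchangeabilityOfExcursions} with the permutations $\sigma_k$ that reverse the order of the $k$ excursions below the running maximum, we get $X \stackrel{d}{=}\phi(X)$, where $\phi$ reverses all complete maximal excursions. Under $\phi$ the ordered list of (overshoot, excursion-length) pairs of the complete excursions of $X$ below $\overline X$ gets reversed; combining this with the two bookkeeping computations above shows that $\overline X\circ d$ on $[0,\eta]$ and $\dunderline X_{\rho+\cdot}-\dunderline X_\rho$ on $[0,n-n\rho]$ are, as step functions, read off from the same data up to the reversal that $\phi$ implements, which gives \eqref{eqnDiscreteEqualityPostInfimumAndXCircD}. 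I expect the main obstacle to be the careful bookkeeping of the two \emph{incomplete} pieces (the portion of $X$ strictly below its maximum after the last return $I_K$, and symmetrically the portion strictly above its minimum before $\rho$ under the reversal): one must check that the truncation at $\eta$ on the right and at $n-n\rho$ on the left exactly excises these incomplete excursions so that only complete excursions — the ones $\phi$ acts on — enter, and that no off-by-one in the indices $0\le j\le n-n\rho$ versus $0\le j\le n\eta$ creeps in. Once the equality is proved for the extremal, different-subset-sum case, the general discrete EI case follows by conditioning on the multiset of increments, and the passage to continuous time (Proposition \ref{proposition_LemmeFromBertoin}) is handled in the remainder of the subsection.
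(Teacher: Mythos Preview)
Your proposal is correct and follows essentially the same approach as the paper: compose time-reversal with the excursion-reversal permutation from Lemma \ref{lemmaExchangeabilityOfExcursions}, then read off the identity $\overline{\tilde X}\circ \tilde d=\dunderline X_{\rho+\cdot}-\dunderline X_\rho$. The paper packages this into a single path-transformation statement whereas you spell out the bookkeeping of overshoots and plateau lengths, but the content is the same. One small slip: you assert $X_n=-1$, which belongs to the degree-sequence setting and is not assumed here; the uniqueness of $\rho$ and $\eta$ comes straight from the different subset sum property, and the claims $\rho\in[0,1)$, $\eta\in(0,1]$ do not follow from your argument as written (nor does the paper justify them beyond the subset-sum remark).
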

\begin{proof}
Uniqueness of the minimum and the maximum are obvious since otherwise there would be two different subsets of $x$ with the same sum. 

The proof equality in law is based on a simple path transformation of $X$, which is the composition of the time-reversal operation $X\mapsto (-X_{1-t})_{t\in [0,1]}$, together with the transformation $\phi$ obtained in Lemma \ref{lemmaExchangeabilityOfExcursions} when $\sigma_k$ is the reversal permutation sending $i$ to $n-i+1$. 
Call the result $\tilde X$. 
Since both transformation leading to $\tilde X$ preserve the law, we see that $\tilde X\stackrel{d}{=} X$. 
To conclude, just note that (with obvious notation) $\overline{\tilde X}\circ \tilde d= \dunderline X_{\rho+\cdot}-\dunderline X_\rho$. 
(See Figure \ref{figureBertoinLemmePathTransformation} for an illustration.)

\end{proof}

\begin{figure}
	\subfloat[Future infimum process and infimum]{	
		\includegraphics[clip, trim=4.5cm 16cm 1.7cm 4cm, width=.3\textwidth]{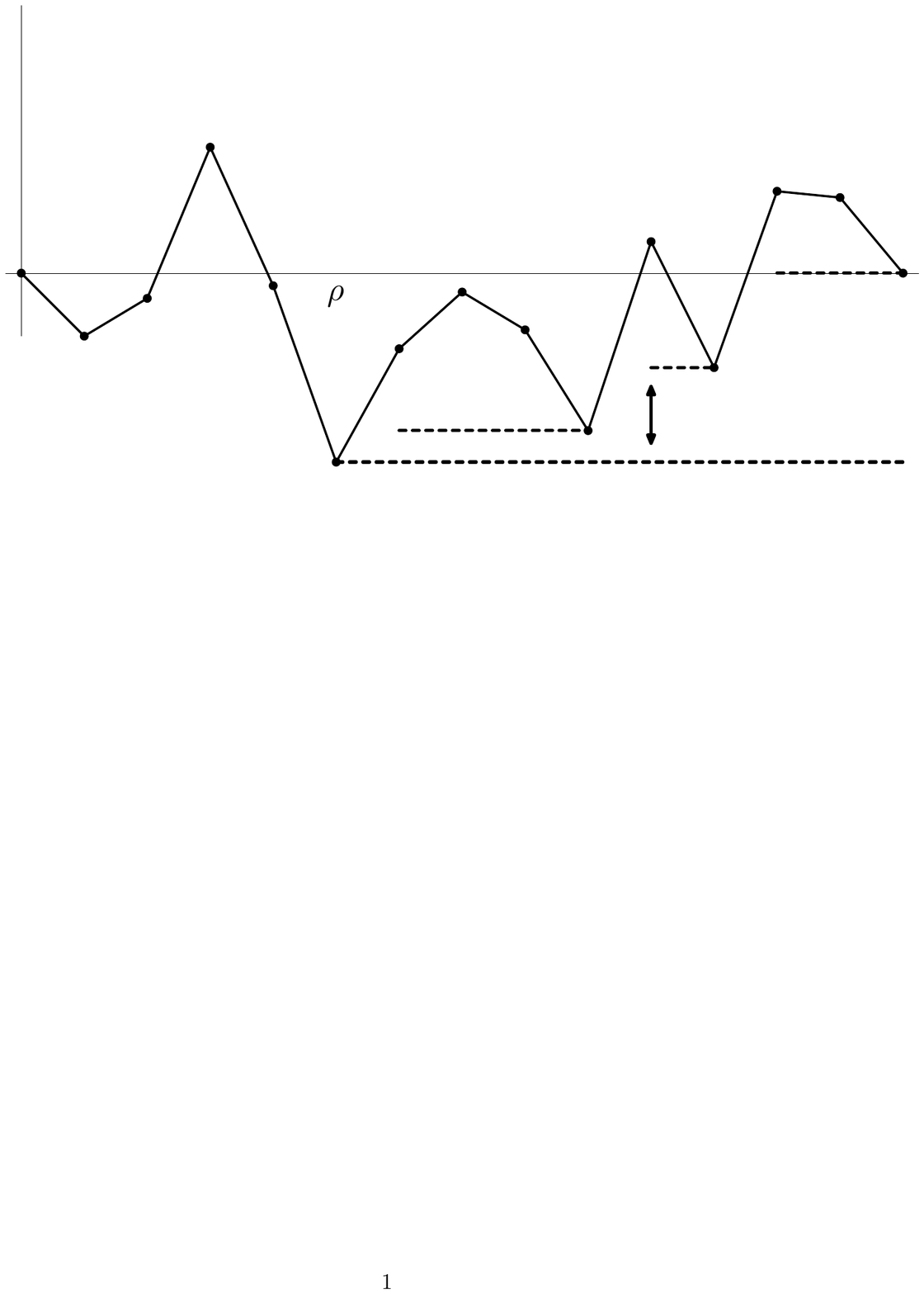}\label{figDiscreteEIWithFutureMinimum}}
	\subfloat[
	Time reversed path 
	]{	
		\includegraphics[clip, trim=4.5cm 16cm 1.7cm 4cm, width=.3\textwidth]{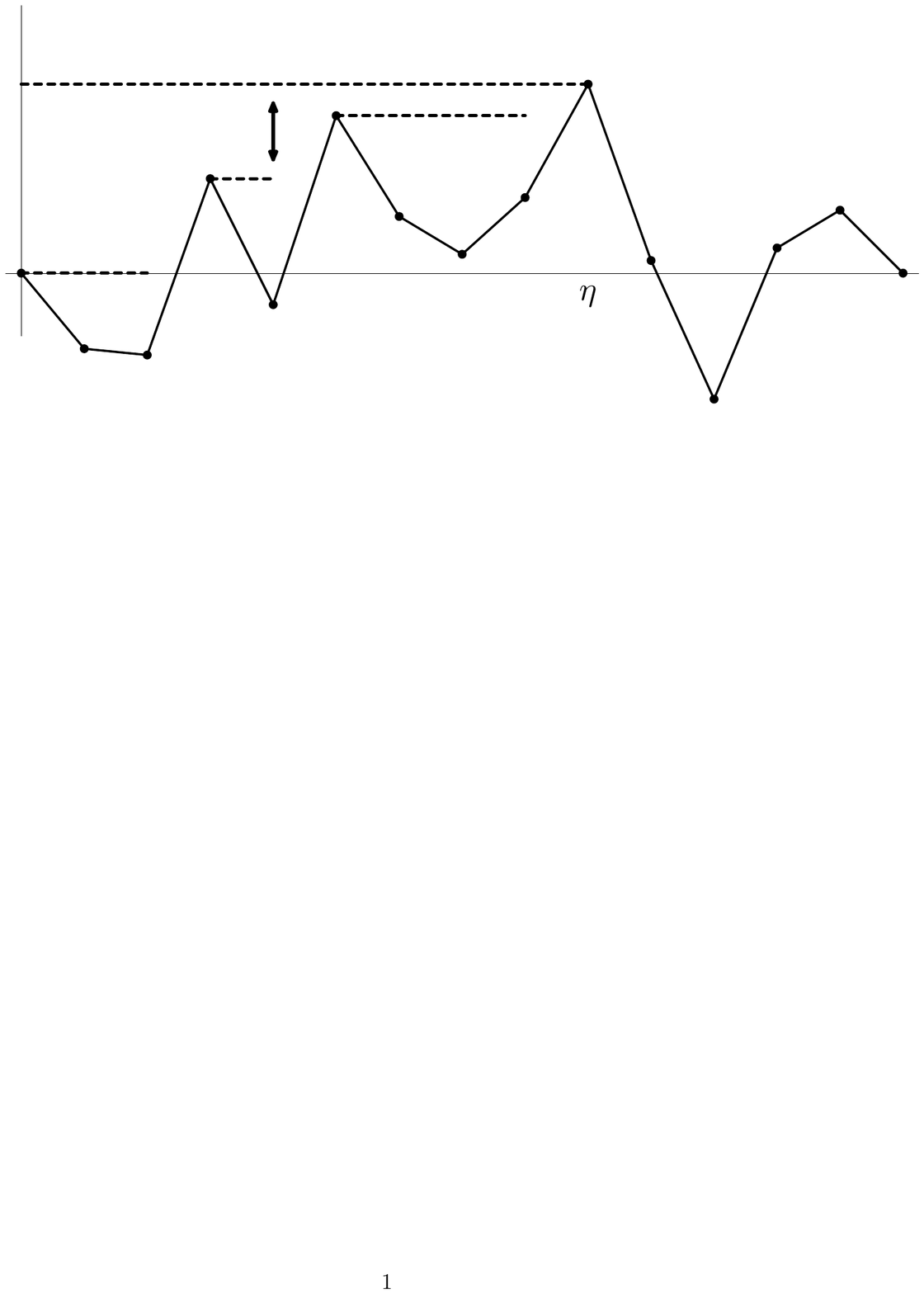}\label{figDiscreteEIWithFutureMinimumReversed}}
	\subfloat[Reordering of the excursions]{
		\includegraphics[clip, trim=4.5cm 16cm 1.7cm 4cm, width=.3\textwidth]{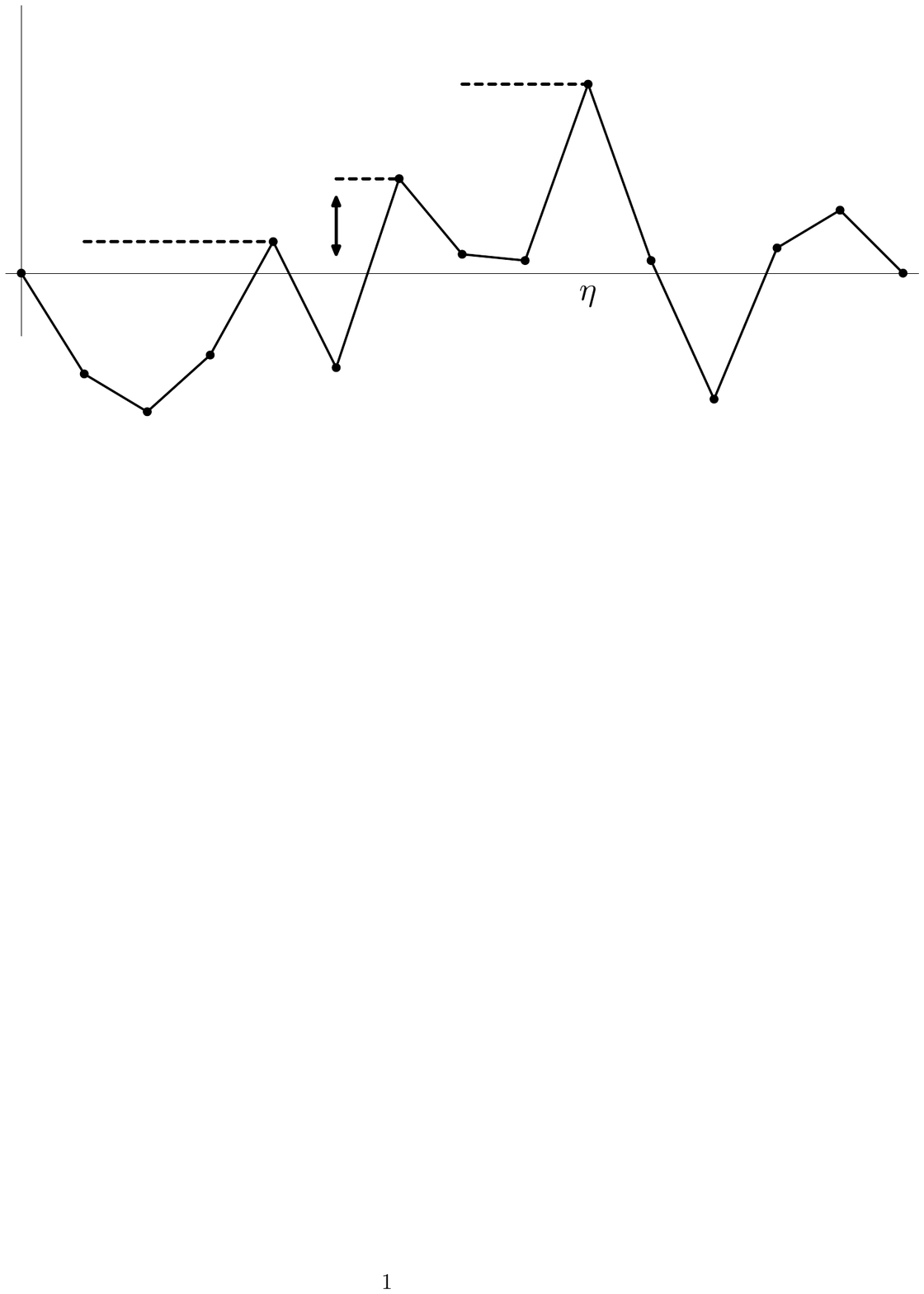}\label{figDiscreteEIWithFutureMinimumExchangeExcursions}}
	\caption{Path transformation illustrating Proposition \ref{propoEqualityDiscretePostInfimumAndXCircD}.}
	\label{figureBertoinLemmePathTransformation}
\end{figure}

Consider now an EI process $X$ as in Proposition \ref{proposition_LemmeFromBertoin} 
and let $X^n$ be such that
\[
	X^n_{t}=X_{(k+1)/n}\text{ if }t\in [k/n,(k+1)/n). 
\]Then, $X^n$ is an EI process on $\set{k/n: 0\leq k\leq n}$ and, by right continuity of $X$, 
$X^n\to X$ almost surely (as random elements of Skorohod space). 
Recall the definition of $d$ and define $d^n$ analogously to $d$ but for the process $X^n$. 
The following lemma will allow us to prove Proposition \ref{proposition_LemmeFromBertoin} by passing to the limit in Proposition \ref{propoEqualityDiscretePostInfimumAndXCircD}. 

\begin{lemma}\label{lemmaConvegenceOfX_nCircD_n}
For any fixed $t\in [0,1]$ we have $d^n(t)\to d(t)$ almost surely, 
and $X^n\circ d^n\to X\circ d$ in the sense of finite-dimensional distributions. 
\end{lemma}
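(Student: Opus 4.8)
The plan is to exploit the unique continuous minimum of $X$ (guaranteed by unbounded variation, as recalled right before Proposition \ref{proposition_LemmeFromBertoin}) to control the excursion straddling a fixed time $t$, and then to transfer this control to the discretizations $X^n$. First I would dispose of the trivial case: if $t$ lies in the closure of the zero set of $\overline X - X$ (equivalently $X_t = \overline X_t$ and $t$ is not the left endpoint of an excursion interval), then $d(t) = t$ and, since $X^n \to X$ in Skorohod space with $X$ continuous at a.e. point, one checks directly that $d^n(t) \to t$. The substantive case is when $t$ lies strictly inside an excursion interval $(g,d)$ of $X$ below $\overline X$, so $d(t) = d$. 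Here the key observation is that because $X$ has a unique minimum and infinite variation, for each fixed $t$ almost surely $t$ is not an endpoint of an excursion interval and $X$ is continuous at both $g$ and $d$; moreover $\overline X_{d-} = \overline X_g = X_g$ while $X_s < \overline X_g$ strictly for $s \in (g,d)$, with $X$ reaching $\overline X_g$ again at $d$.

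The core step is then: given $\varepsilon > 0$ small, use strictness of the excursion together with continuity of $X$ at $g$ and $d$ to find $\delta > 0$ such that $\sup_{s \in [g+\varepsilon, d-\varepsilon]} (\overline X_g - X_s) > \delta$ fails to be contradicted — more precisely, $X_s \le \overline X_g - \delta$ on $[g+\varepsilon, d-\varepsilon]$ is false; rather one wants that on $[t, d-\varepsilon]$ the process stays a definite amount below its running maximum, while just after $d$ it exceeds $\overline X_g$. Skorokhod convergence $X^n \to X$ (with the time-change maps $\lambda_n \to \mathrm{id}$), combined with continuity of $X$ at $g, d, t$, then forces: for $n$ large, the running maximum of $X^n$ at time $t$ is close to $\overline X_g$, $X^n$ stays strictly below it on a subinterval approaching $(g,d)$, and $X^n$ exceeds $\overline X_g$ shortly after $d$. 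This pins $d^n(t)$ into a shrinking neighborhood of $d$, giving $d^n(t) \to d(t)$ a.s. For the finite-dimensional convergence of $X^n \circ d^n$, I would note that $X^n \circ d^n(t) = X^n_{d^n(t)}$, that $d^n(t) \to d(t)$ with $X$ continuous at $d(t)$ a.s., and that Skorokhod convergence evaluated along a converging sequence of continuity points gives $X^n_{d^n(t)} \to X_{d(t)}$; doing this jointly for finitely many times $t_1, \dots, t_k$ (each a continuity/non-endpoint point a.s.) yields the stated finite-dimensional convergence.

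The main obstacle I anticipate is the measure-theoretic bookkeeping around the claim ``for fixed $t$, almost surely $t$ is not an excursion endpoint and $X$ is continuous at $t$, $g$, and $d$.'' Continuity at a fixed $t$ is immediate (the jump times of a \cadlag\ EI process are a.s. not at a deterministic $t$), but ruling out that $t$ is an endpoint, and securing continuity of $X$ at the \emph{random} points $g = g(t)$ and $d = d(t)$, needs the regularity of $0$ for both half-lines for the post- and pre-minimum processes — exactly the infinite-variation input already invoked in the excerpt (via \cite[Thm. 2]{2019arXiv190304745A}). I would lean on that, plus the fact that the unique minimum forces $\overline X$ to be strictly increasing through each excursion endpoint, to conclude the endpoints are a.s. continuity points of $X$. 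The rest is a routine but slightly delicate $\varepsilon$–$\delta$ argument with the Skorokhod time-changes, of the type already carried out in Subsection \ref{subsection_SubsequentialScalingLimits}.
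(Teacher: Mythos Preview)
Your overall strategy is sound and parallels the paper's: both reduce to showing $d^n(t)\to d(t)$ by controlling when the discretization first returns to its running maximum, then reading off $X^n_{d^n(t)}\to X_{d(t)}$ from Skorohod convergence at a continuity point. The paper outsources the Skorohod step to a cited lemma on the continuity of hitting times of open sets (with $O=(\overline X_t,\infty)$) and concentrates on verifying its two hypotheses; you instead sketch the underlying $\varepsilon$--$\delta$ argument directly. Either route works in principle.

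You correctly flag the main obstacle --- securing continuity of $X$ at the random time $d=d(t)$ and immediate overshoot of $\overline X_t$ just after $d$ --- but your proposed resolution does not work. The claim that ``the unique minimum forces $\overline X$ to be strictly increasing through each excursion endpoint'' is not valid: uniqueness of the \emph{global} minimum says nothing about the local behaviour of $X$ or $\overline X$ at an arbitrary right endpoint of an excursion below the running maximum. Likewise, invoking regularity ``for the post- and pre-minimum processes'' targets the wrong object; what is needed is regularity at the \emph{stopping time} $d$, and EI processes are not Markov, so this does not follow from behaviour at $0$ alone. The paper supplies exactly the missing tool as Lemma \ref{lemmaOnExchangeabilityAtStoppingTime}: a predictable-mapping argument \`a la Kallenberg showing that for any stopping time $T$ the post-$T$ increments of $X$ (suitably rearranged) again form an EI process with the same law. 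Combined with the infinite-variation input that such processes enter $(0,\infty)$ immediately, this yields both $\Delta X_d=0$ and immediate overshoot of $\overline X_t$. Without this step, your $\varepsilon$--$\delta$ argument cannot establish the upper bound $d^n(t)\le d+\varepsilon$ for large $n$.
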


For the proof, we need an additional lemma. 

The following lemma will be used to prove that 0 is instantaneous for $\overline{X}-X$ (meaning that, for every $\eps>0$ there exists $t_\eps\in (0,\eps)$ such that $\overline{X}_{t_\eps}-X_{t_\eps}>0$ ), 
which is needed to prove that $d^n_t\to d_t$. 
It is similar to the strong Markov property and can be proven quite simply by a path transformation due to Kallenberg. 

\begin{lemma}
	\label{lemmaOnExchangeabilityAtStoppingTime}
	Let $X$ be an extremal EI process on $[0,1]$ and let $T$ be a stopping time on [0,1]%
	. 
	Then the process $\tilde X$ given by
	\[
	\tilde X_t=
	\begin{cases}
	X_{t+T}-X_T	& 0\leq t\leq 1-T\\
	X_1-X_{(1-t)-}	& 1-T<t\leq 1
	\end{cases}
	\]has the same law as $X$. 
\end{lemma}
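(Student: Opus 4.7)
The plan is to establish the lemma first for discrete-time extremal EI processes via a combinatorial bijection on permutations, and then extend to continuous time by a discretization argument.

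In the discrete setting, an extremal EI process on $\set{0,1,\ldots,n}$ has the form $X_k=\sum_{i=1}^k x_{\pi(i)}$ for a deterministic jump multiset $(x_1,\ldots,x_n)$ (momentarily assumed to satisfy the different subset sum property) and a uniform random permutation $\pi\in S_n$. The transformation $X\mapsto\tilde X$ then becomes a map $\Phi:S_n\to S_n$ which, on the event $\set{T(\pi)=k}$, sends
\[
\pi\longmapsto \tilde\pi=\paren{\pi(k+1),\ldots,\pi(n),\pi(k),\pi(k-1),\ldots,\pi(1)}.
\]
First I would show that $\Phi$ is a bijection by writing down its inverse: given $\tilde\pi$, form the reversal $\sigma=\paren{\tilde\pi(n),\tilde\pi(n-1),\ldots,\tilde\pi(1)}$, set $k^*=T(\sigma)$, and recover
\[
\pi^*=\paren{\sigma_1,\ldots,\sigma_{k^*},\tilde\pi(1),\ldots,\tilde\pi(n-k^*)}.
\]
The identity $T(\pi^*)=k^*$ is forced by the stopping time property: $\set{T\leq j}$ is determined by the first $j$ entries of its input, and the first $k^*$ entries of $\pi^*$ agree with those of $\sigma$, so that $T(\pi^*)=T(\sigma)=k^*$. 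The same reasoning rules out any other $k$ satisfying $T(\pi^{(k,\tilde\pi)})=k$, so $\Phi^{-1}$ is well-defined. Since $\pi$ is uniform on $S_n$, so is $\tilde\pi$, yielding $\tilde X\stackrel{d}{=}X$. The general extremal case (without the subset sum assumption) follows by perturbing the jumps and taking a limit.

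To pass to continuous time, I would sample $X$ on a dyadic grid to obtain a discrete extremal EI process $X^n$, approximate $T$ by a discrete-filtration stopping time $T^n\to T$, apply the discrete result for $\tilde X^n\stackrel{d}{=}X^n$, and take $n\to\infty$. The hard part will be the splicing point $t=1-T$: the stated formula gives $\tilde X_{1-T}=X_1-X_T$ while the right-hand limit equals $X_1-X_{T-}$, so $\tilde X$ is \cadlag\ at $1-T$ only when $X$ is continuous at $T$. For continuous-time extremal EI processes the jump set $\set{U_i}$ is a.s. countable and $T$ generically avoids it, so the defect is null; the main technical content is to verify convergence $\tilde X^n\to\tilde X$ in the Skorohod topology across the splicing instant and to choose $T^n$ so that the joint convergence $(X^n,T^n)\to(X,T)$ propagates through the transformation.
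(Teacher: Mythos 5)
Your discrete bijection is correct, and it is very much in the spirit of the combinatorial arguments the paper uses elsewhere (Lemmas \ref{lemmaTransf2143EsBiyeccionn} and \ref{lemmaExchangeabilityOfExcursions}); but the paper proves this lemma by an entirely different and much shorter route, namely Kallenberg's predictable mapping theorem. One defines the random time change $V_t=1-t$ for $0\le t\le T$ and $V_t=t-T$ for $T<t\le 1$, checks that $V$ preserves Lebesgue measure and is predictable (it is c\`ag and adapted), and concludes that $t\mapsto\int_0^1\indi{V_s\le t}\,dX_s$ — which is exactly $\tilde X$ — has the law of $X$. This avoids discretization altogether and, in particular, avoids every difficulty you flag in your last paragraph.

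Those difficulties are genuine and your sketch does not resolve them. First, the claim that $T$ ``generically avoids'' the jump set is false for stopping times: the first time the process jumps by more than a fixed amount is a stopping time that sits exactly on a jump, and the paper invokes this lemma (in the proof of Lemma \ref{lemmaConvegenceOfX_nCircD_n}) precisely at jump times of the reflected process. So the \cadlag\ defect at $1-T$ cannot be dismissed as a null event; the correct fix is to take the \cadlag\ modification $\tilde X_{1-T}=X_1-X_{T-}$, so that $\Delta X_T$ reappears as the jump of $\tilde X$ at time $1-T$ — exactly as in your discrete picture, where $x_{\pi(k)}$ becomes the $(n-k+1)$-st increment (this is also what the stochastic integral above produces). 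Second, your bijection requires $\set{T=k}$ to be a function of the first $k$ sampled increments, i.e., $T$ must be a stopping time of the filtration generated by the skeleton $X^n$ itself; the natural discretization $T^n=\ceil{nT}/n$ is a stopping time of the full filtration $(\F_{k/n})_k$ but not, in general, of the coarser skeleton filtration, so the permutation argument does not apply verbatim and must be replaced by a conditional (strong-Markov-type) version. Third, the Skorohod convergence of $\tilde X^n$ to $\tilde X$ across the splice at the random times $1-T^n\to 1-T$ is exactly where jumps at or near $T$ cause trouble, and nothing in the sketch controls it. None of this is fatal to the strategy, but as written the continuous-time half of the proof is a program rather than an argument, whereas the predictable mapping theorem settles the whole statement in a few lines.
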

\begin{proof}
	The assertion will follow from the predictable mapping theorem for EI processes (cf. \cite[Theorem 4.7]{MR2161313}), once we show that random bijection $V$ of $[0,1]$ given by 
	\[
	V_t=\begin{cases}
	1-t	& 0\leq t\leq T\\
	t-T	& T<t\leq 1
	\end{cases}
	\]preserves Lebesgue measure and is predictable (in the sense that for any $t\geq 0$, the process $\indi{V_\cdot\leq t}$ is predictable). Indeed, the predictable mapping theorem tells us that the process $I:t\mapsto \int_0^1 \indi{V_s\leq t}\, dX_s$ has the same law as $X$ and we will now see that $\tilde X=I$. 
	
	That $V$ preserves Lebesgue measure can be seen simply as follows: the effect of $V$ is to interchange the intervals $[0,T]$ and $[T,1]$, reflecting the former. 
	
	On the other hand, since $V$ is c\`ag, predictability follows if $V$ is adapted. 
	However, note that
	\[
	\set{V_s\leq t}
	=\set{T\leq 1-t, T\leq s-t}\cup\set{T>1-t, 1-t\leq s}. 
	\]The first set on the union belongs to $\F_{s-t}\subset \F_s$, while the second one belongs to $\F_{1-t}\subset \F_s$  if $1-t\leq s$ and equals $\emptyset$ otherwise. In any case, we see that $\set{V_s\leq t}\in \F_s$. 
	
	Also, note that $\set{s\geq 0: V_s\leq t}$ is one of two stochastic intervals: 
	either $[T,T+t]$ if $t\leq 1-T$ or $[1-t,1]$ if $t>1-T$. 
	Hence, $\int_0^1 \indi{V_s\leq t}\,  dX_s=\tilde X_t$. 
	We deduce from the predictable mapping theorem that $\tilde X\stackrel{d}{=}X$. 
\end{proof}

Define $\mathcal{Z}:=\{t\in [0,1]:\overline{X}_t=X_t \}$.

\begin{proof}[Proof of Lemma \ref{lemmaConvegenceOfX_nCircD_n}]
We will use Lemma 8 in \cite{2019arXiv191009501M}, 
which gives conditions for the continuity (on Skorohod's space) of the hitting times and hitting positions of an open set. 
Fixing $t\in (0,1)$, we will consider the (random) open set $O=(\overline X_t,\infty)$. 
The aforementioned lemma 8 tells us that if $X$ cannot approach $\overline O$ without reaching $O$ and if there are no jumps from $\partial O$ into $O$ then $d^n_t\to d_t$ and $X^n_{d^n_t}\to X_{d_t}$. 
To prove this, let us note that $X$ does not jump when it approaches a level of its previous maximum. 
In other words, that if $R=\inf\set{s\geq t: X_{s-}>\overline X_t}$, then $\Delta X_R=0$. 
This follows since otherwise the reflected process $\hat X$ features a jump time $U$ such that $\hat X_{U+\cdot}-\hat X_U$ is negative on a right neighborhood of $0$. However, by Lemma \ref{lemmaOnExchangeabilityAtStoppingTime} tells us that the latter has the same behavior as $\hat X$ on a neighborhood of zero Theorem 1 in \cite{2019arXiv190304745A} tells us that infinite variation EI processes such as $\hat X$ achieve positive values immediately. 
But then, if $A$ is the approach time of $O$ given by $\inf\set{s\geq t: X_{s-} \text{ or }X_s\geq \overline X_t}$, we see that $X_A\geq \overline X_t$. Also, Lemma \ref{lemmaOnExchangeabilityAtStoppingTime} tells us that right after $X_A$, $X$ reaches higher levels. Hence, we see that $A$ coincides with the hitting time of $O$. 
We have proved the two conditions implying that, for every fixed $t\in (0,1)$, almost surely, $d^n_t\to d_t$ and $X^n_{d^n_t}\to X_{d_t}$. The finite-dimensional convergence now follows. 
\end{proof}

\begin{proof}[Proof of Proposition \ref{proposition_LemmeFromBertoin}] 
By Proposition \ref{propoEqualityDiscretePostInfimumAndXCircD}, 
we have that
\[
	\underline{\underline{X}}^n_{\rho_n+\cdot}-\underline{\underline{X}}^n_{\rho_n}
	\stackrel{d}{=} \overline{X}^n\circ d^n
\]
Regarding the left hand side, recall that the minimum is a continuous operation on Skorohod space (cf. \cite[\S 6]{MR561155}) and that $\rho_n\to\rho$ (by uniqueness of the minimum). 
Hence, we see that
\[
	\underline{\underline{X}}^n_{\rho_n+\cdot}-\underline{\underline{X}}^n_{\rho_n}
	\to \underline{\underline{X}}_{\rho+\cdot}-\underline{\underline{X}}_{\rho}.
\]
But then, Proposition \ref{propoEqualityDiscretePostInfimumAndXCircD} and Lemma \ref{lemmaConvegenceOfX_nCircD_n} tell us that the \cadlag\ processes 
\[
 	\underline{\underline{X}}_{\rho+\cdot}-\underline{\underline{X}}_{\rho}
	\quad\text{and}\quad  
	X\circ d 
\]have the same finite-dimensional distributions, so forcibly the same law. 
\end{proof}

We have just finished providing the proofs of the main results of the paper, 
contained in the statements of the Propositions and the Theorem in  Section \ref{sectionIntroProfileOfTGDS}. 
All that remains is to give further remarks on the examples, by proving Corollary \ref{corolarioConvergenciaCGWVarFinIntro} and showing how one can construct degree sequences satisfying the hypotheses of Theorem \ref{teoUnicidadEnFnalGralIntro}.

\section{Some remarks on the examples}
\label{section_ExampleSection}

This section has two objectives. 
First, to prove Corollary \ref{corolarioConvergenciaCGWVarFinIntro}. 
Secondly, to construct degree sequences showing the general applicability of Theorem \ref{teoUnicidadEnFnalGralIntro}.

\subsection{Application to Conditioned Galton-Watson trees}
Let us turn to the proof of Corollary \ref{corolarioConvergenciaCGWVarFinIntro}. 
Part of the interest of this proposition lies in showing how one can apply Theorem \ref{teoUnicidadEnFnalGralIntro} to non-extremal EI processes. 

Let $\mu$ be a critical and aperiodic offspring distribution 
which either has finite variance or regularly varying tails 
as stated in Corollary \ref{corolarioConvergenciaCGWVarFinIntro}. 
Consider a random walk $W$ with jump distribution $\tilde \mu$ given by $\tilde \mu_k=\mu_{k+1}$. 
$W$ oscillates as it has mean zero and therefore its hitting time $T$ of $-1$ is finite almost surely. 
From Section 1.2 in \cite{MR2203728} (see also the introduction to \cite{MR3449255}), 
we know that the BFW of the $\mu$-Galton-Watson tree has the same law 
as the random length sequence $1+W_0,1+W_1,\ldots, 1+W_T$. 
Therefore the size of of the tree has the same law as $T$ 
and aperiodicity implies that 
$\proba{T=n}>0$ for large enough $n$, so that the $\text{CGW}(n)$ tree is well defined. 
Also, one sees that the BFW $X^n$ of the $\text{CGW}(n)$ tree 
has the same law as $(1+W_i,i\leq n)$ conditionally on $T=n$. 
This process can also be obtained by means of the Vervaat transformation 
of the random walk bridge $W^{b}$ of length $n$ from $1$ to $0$ 
 by interchanging the pre first minimum and post first minimum parts of the trajectory. 
The random walk bridge has the law of $(1+W_i,i\leq n)$ conditionally on $W_n=-1$. 
Since, by Kemperman's formula 
(related to the preceeding assertion on the Vervaat transformation in \cite{MR1785529}, \cite[Ch. 6]{MR2245368}), 
$\proba{T=n}=\proba{W_n=-1}/n$, 
then the bridge is also well defined for large $n$. 
As is well known, the random walk $W$ has a scaling limit. 
If $\mu$ has finite variance $\sigma^2>0$, 
$(W_{\floor{nt}}/\sqrt{n},t\geq 0)$ converges weakly in Skorohod space 
to $Z=\sigma B$ where $B$ is a Brownian motion. 
If, on the other hand, $\overline \mu_n\sim n^{-\alpha}\imf{L}{n}$ 
for some regularly varying function $L$ and some $\alpha\in (1,2)$  
then $(W_{\floor{nt}}/b_n,t\geq 0)$ converges weakly 
to an $\alpha$-stable L\'evy process $Z$ with no negative jumps, where $b_n$ is a regularly varying sequence of index $1/\alpha<1$. Note that in both cases the size of the tree is $n$ and $n/b_n\to \infty$. 
There are also convergence results for the corresponding bridge and for the BFWs. 
Indeed, the latter follows from the former by continuity of the Vervaat transformation on functions that achieve their minimum uniquely and continuously. 
To get convergence of the bridges, one can use \cite[Proposition 13]{kersting2011height}; 
and for the convergence of 
BFWs of a $\text{CGW}(n)$, recall that 
the classical result from \cite{MR0415706} implies that the BFW of the CGW$(n)$ $\tilde W$ is such that $(\tilde W_{nt}, 0\leq t\leq 1)$ converges weakly to $X=\sigma e$ where $e$ is the so-called normalized Brownian excursion. 
When $\mu$ has regularly varying tails, Proposition 13 in \cite{kersting2011height} or 
Lemma 4.5 from \cite{MR1964956} give us the same result for $\tilde W$ under the spatial normalization $b_n\sim n^{1/\alpha}\tilde L(n)$ for some slowly varying function $\tilde L$, where now $X$ is the normalized stable excursion. These stable excursions can be obtained from the bridge by applying the Vervaat transformation as in \cite{MR515820} and \cite{MR1465814}.

\begin{lemma}
Let $X$ be the normalized excursion of an $\alpha$-stable process with no negative jumps and index $\alpha\in (1,2]$. 
Then $\int_0^1 1/X_s\, ds<\infty$. 
\end{lemma}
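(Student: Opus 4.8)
The plan is to deduce the lemma as a direct application of Proposition~\ref{proposition_CompactnessOfICRT}, by exhibiting the normalized $\alpha$-stable excursion $X$ as the Vervaat transform of an EI process $X^b$ of the form \eqref{eqnEIP} and checking that the relevant integrability hypothesis holds. First I would recall that the $\alpha$-stable bridge (with no negative jumps, normalized to go from $0$ to $0$ on $[0,1]$) is itself an EI process; more precisely, an $\alpha$-stable L\'evy process with no negative jumps conditioned to return to its starting point at time $1$ is an extremal EI process whose canonical Kallenberg parameters are $(0,0,\beta)$ with $\beta_i$ decreasing, $\sum_i\beta_i^2<\infty$, and, crucially, $\overline\beta(x)=\#\{i:\beta_i>x\}$ behaving like a constant times $x^{-\alpha}$ as $x\to 0$ (this is the classical description of the jumps of a stable subordinator/process; see e.g.\ \cite{MR1465814}). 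In the Brownian case $\alpha=2$ one has instead $\sigma^2>0$ and $\beta\equiv 0$. Its Vervaat transform is, by definition (and as used throughout Section~\ref{section_ExampleSection}), the normalized $\alpha$-stable excursion $X$.

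With this identification in hand, I would split into the two cases of Proposition~\ref{proposition_CompactnessOfICRT}. If $\alpha=2$, then $\sigma^2>0$ and the first assertion of that proposition gives $\int_0^1 1/X_s\,ds<\infty$ immediately. If $\alpha\in(1,2)$, then $\sigma^2=0$ and I would verify the two numbered conditions: condition~\ref{lowerBGAssumption} holds because $x^\alpha\overline\beta(x)$ tends to a positive constant (in particular to a limit, which one can even inflate—any $\alpha'\in(1,\alpha)$ also works and makes $x^{\alpha'}\overline\beta(x)\to\infty$); and for the second condition one needs some $\tilde\alpha<1/(2-\alpha)$ with $\sum_i\beta_i^{\tilde\alpha}<\infty$. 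Since $\overline\beta(x)\asymp x^{-\alpha}$, one has $\beta_i\asymp i^{-1/\alpha}$, so $\sum_i\beta_i^{\tilde\alpha}<\infty$ exactly when $\tilde\alpha>\alpha$; such a $\tilde\alpha$ exists strictly below $1/(2-\alpha)$ precisely because $\alpha<1/(2-\alpha)$ for $\alpha\in(1,2)$ (equivalently $(\alpha-1)(2-\alpha)<$ a suitable bound — concretely, $\alpha(2-\alpha)<1$ for $\alpha\neq1$). Hence pick $\tilde\alpha\in(\alpha,1/(2-\alpha))$; then Proposition~\ref{proposition_CompactnessOfICRT}(2) yields $\int_0^1 1/X_s\,ds<\infty$.

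The only genuine point requiring care — and the step I expect to be the main obstacle — is the precise asymptotics of the ordered jump sequence $(\beta_i)$ of the $\alpha$-stable process, i.e.\ justifying $\overline\beta(x)\sim c\,x^{-\alpha}$ (so that $\beta_i\sim c^{1/\alpha} i^{-1/\alpha}$) and making sure this is the jump data that appears when one writes the stable bridge in Kallenberg's form \eqref{eqnEIP}. This is standard (the L\'evy measure of the $\alpha$-stable process with no negative jumps is $c\,x^{-1-\alpha}\,dx$ on $(0,\infty)$, whose tail is $\overline\nu(x)=\tfrac{c}{\alpha}x^{-\alpha}$, and conditioning to a bridge does not change the jump sizes), but it should be stated explicitly. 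Everything else is bookkeeping: once the parameters are identified the lemma is an immediate corollary. I would therefore write the proof as: (i) identify $X$ as the Vervaat transform of the stable bridge, an EI process with the stated parameters; (ii) in case $\alpha=2$ invoke the $\sigma^2>0$ clause; (iii) in case $\alpha\in(1,2)$ check conditions \ref{lowerBGAssumption} and~(2) using $\overline\beta(x)\asymp x^{-\alpha}$ and the elementary inequality $\alpha<1/(2-\alpha)$; (iv) conclude via Proposition~\ref{proposition_CompactnessOfICRT}.
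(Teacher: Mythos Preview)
There is a genuine gap: the $\alpha$-stable bridge (for $\alpha<2$) is \emph{not} an extremal EI process. In Kallenberg's representation \eqref{eqnEIP} the sequence $(\beta_i)$ is a deterministic constant, whereas the ranked jump sizes of the stable bridge are \emph{random}---they are the ordered atoms of a Poisson point process, perturbed by the bridge conditioning. You are conflating the deterministic tail $\overline\nu(x)=\tfrac{c}{\alpha}x^{-\alpha}$ of the L\'evy measure with the random counting function $\overline\beta(x)=\#\{i:\beta_i>x\}$ of the realised jumps; the assertion that ``conditioning to a bridge does not change the jump sizes'' is also false as stated. Your route can be salvaged by working conditionally on $\beta$ (so that, given $\beta$, the bridge \emph{is} of the form \eqref{eqnEIP}) and then proving that almost surely $x^{\alpha'}\overline\beta(x)\to\infty$ for some $\alpha'\in(1,\alpha)$ together with $\sum_i\beta_i^{\tilde\alpha}<\infty$ for a suitable $\tilde\alpha$. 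This is true and can be obtained from the law of large numbers for the jump point process of the unconditioned stable process, transferred to the bridge by local absolute continuity---but that step is the heart of the matter, not bookkeeping, and your sketch does not supply it.

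The paper bypasses this issue via a different mechanism. For $\alpha=2$ it uses absolute continuity of the excursion with respect to the three-dimensional Bessel process together with Pitman's identification of the Brownian escape process as an inverse stable$(1/2)$ subordinator. For $\alpha<2$ it does not invoke Proposition~\ref{proposition_CompactnessOfICRT} at all: it applies Proposition~\ref{proposition_LevyEnvelopes} directly to the unconditioned spectrally positive stable L\'evy process (whose L\'evy measure \emph{is} deterministic and satisfies the power-law hypotheses there), obtains lower envelopes of the form $t^{1/\gamma}$ for the pre- and post-minimum processes, and transfers these to the bridge via the local absolute continuity of the bridge with respect to the L\'evy process on $[0,t]$ for each $t<1$. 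This keeps the analysis inside the L\'evy setting, where the parameters are genuinely deterministic, and avoids any almost-sure statement about the random~$\beta$.
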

\begin{proof}
The lemma follows simply from results in the literature (for $\alpha=2$ several ways to prove the integrability of $1/\bo{e}$ on $[0,1]$ are given in \cite[Remark 5.2]{MR2245498}). 
However, it also follows from the analysis in Section \ref{section_LampertiForEI}, 
which is why we briefly sketch both approaches. 

In the finite variance case when $\alpha=2$, $X$ and $(X_{1-t},0\leq t\leq 1)$ have the same law and $(X_t,t\leq 1/2)$ is absolutely continuous with respect to the three-dimensional Bessel process $R$. 
On the other hand, the Brownian escape process $F_t=\inf_{s\geq t} R_s$ is the inverse of a stable subordinator $\tau$ of index $1/2$ as shown in \cite{MR542136}. 
By \cite[Thm. 3.1]{MR0123362}, we see that $\limsup_{t\to 0}\tau_t/t^2=0$. 
Hence, $\liminf_{t\to 0}R_s/\sqrt{s}\geq \liminf_{t\to 0}F_s/\sqrt{s}=\infty$. 
Therefore, for some random $\eps>0$, $\int_{[0,\eps]\cup[1-\eps,1]} 1/X_s\, ds<\infty$. 
Since $X_t=0$ only for $t=0,1$, we deduce the result in this case. 

When $\alpha<2$, the statement is proved using continuous state branching processes with immigration in Lemma 9 and the final comment of \cite{MR2018924}. 
It is also proved in \cite{angtuncioHernandezThesis} by applying the stretched exponential bounds on the tails of the distribution of the height of CGW$(n)$ trees of \cite{MR3185928}. 
However, we can give a proof as in the preceeding paragraph. 
First let us note that the statement follows if we find power law bounds for the pre and post minimum processes of $X^{b}$, by the Vervaat transformation. 
Let $\rho_t$ be the unique place that $X^{b}$ reaches its minimum on $[0,t]$; 
it is known that the minimum is achieved uniquely and continuously under our hypothesis. 
Therefore $\rho_t=\rho$ as long as $t$ is close enough to $1$ so that for positive $\beta$ we have
\[
\proba{\liminf_{s\to 0}\frac{X^{b}_{\rho+s}-X^{b}_\rho}{s^{\beta}}=\infty}
=
\lim_{t\to 1} \proba{\liminf_{s\to 0}\frac{X^{b}_{\rho_t+s}-X^{b}_{\rho_t}}{s^{\beta}}=\infty}. 
\]However, recall that $X^{b}$ is locally absolutely continuous with respect to $X$ (cf. \cite{MR2789508}): 
if $t<1$ then
\[
\proba{\liminf_{s\to 0}\frac{X^{b}_{\rho_t+s}-X^{b}_{\rho_t}}{s^{\beta}}=\infty}
=\esp{\indi{\liminf_{s\to 0}\frac{X_{\rho_t+s}}{s^{\beta}}=\infty}\frac{\hat f_{1-t}(X_s)}{\hat f_1(t)}}=0,
\]where $\hat f_t$ is the density of $-X_t$ and the last equality follows from Proposition \ref{proposition_LevyEnvelopes} whenever $\beta\in (1/\alpha, 1)$ for some $\beta<1$. 
An analogous result holds for the pre minimum of the bridge. 
Since the minimum is achieved uniquely, we see that, almost surely
\[
	\int_0^1 \frac{1}{X^{b}_s-X^{b}_\rho}\, ds<\infty. \qedhere
\]
\end{proof}

\begin{proof}[Proof of Corollary \ref{corolarioConvergenciaCGWVarFinIntro}]
We have already argued that, under the conditions of Corollary \ref{corolarioConvergenciaCGWVarFinIntro}, 
the breadth-first walks have scaling limits equal to the Vervaat transform of an exchangeable increment process of unbounded variation: either the Brownian bridge or the stable bridge of index $\alpha\in (1,2)$. 
Thanks to Skorohod's representation result, 
we can place ourselves in a probability space where the convergence $W^{\text{b}}_{\floor{n\cdot}}/b_n$ holds almost surely. 

Let $\beta^n_1\geq \beta^n_2\geq\cdots>0$ be the ranked jumps of $W^{b}/b_n$ and $\beta^1\geq \beta_2\cdots$ be the corresponding ones for $X$. If we add one to the former, we get the ordered child sequence  associated to the random degree sequence  of the conditioned Galton-Watson trees. 

Since $W^{b}$ and $X^{b}$ are exchangeable increment processes, 
the convergence of $W^{b}_{\floor{n\cdot}}/a_n\to X^{b}$ is equivalent (thanks to Theorem 2.2 in \cite{MR0394842} as recalled in Subsection \ref{subsection_ConvergengeOfBFWs}) to that of
\[
	\beta^n_i\to \beta_i
	\quad\text{and}\quad
	\sum_i (\beta^n_i)^2\to \sum_i \beta_i^2. 
\](in the regularly varying case) and to
\[
	\beta^n_i\to 0
	\quad\text{and}\quad
	\sum_i (\beta^n_i)^2\to \sigma^2. 
\]in the finite variance case. 

Also, our discussion in Section \ref{sectionIntroProfileOfTGDS} on the fact that conditioned Galton-Watson trees, conditioned on their degree sequence being $s$ have law $\p_s$, implies that\[
	\proba{W^{b}=w}=\esp{\imf{\p_{\beta^n}}{W^n=w}}
\]where $\p_{\beta^n}$ is the law of an exchangeable increment process constructed from the sequence of jumps $\beta^n$ as in Section \ref{section_EIProcesses}. The similar statement
\[
	\proba{X^{b}\in A}=\esp{\imf{\p_\beta}{X^{b}\in A}}
\]is true, where $\p_{\beta}$ is the law of the continuous EI process obtained through Kallenberg's representation in Equation \eqref{eqnEIP} out of the jumps $\beta$. 
Since the sequences $\beta^n$ and $\beta$ almost surely satisfy the conditions of Theorem \ref{teoUnicidadEnFnalGralIntro}, we obtain
\[
	\imf{\se_{\beta^n}}{\imf{H}{X^n,C^n,Z^n}}
	\to \imf{\se_{\beta}}{\imf{H}{X,C,Z}}
\]for continuous and bounded functionals $H$. 
The bounded convergence theorem let's us conclude that, 
\[
	\imf{\se}{\imf{H}{X^n,C^n,Z^n}}
	\esp{\imf{\se_{\beta^n}}{\imf{H}{X^n,C^n,Z^n}}}
	\to \esp{\imf{\se_{\beta}}{\imf{H}{X,C,Z}}}
	=\imf{\se}{\imf{H}{X,C,Z}}, 
\]which concludes the proof of Corollary \ref{corolarioConvergenciaCGWVarFinIntro}. 
\end{proof}

\subsection{Examples of degree sequences satisfying the assumptions of Theorem \ref{teoUnicidadEnFnalGralIntro}}
\begin{example}[Infinite variation EI process with $\sigma=0$]
Let $(\beta_j,j\in \na)\subset \re_+$ be a deterministic sequence such that $\sum \beta_j^2<\infty$ and $\sum \beta_j=\infty$. 
For simplicity assume $\beta_j> \beta_{j+1}$ for every $j$.

Consider a sequence $(b_n,n\in \na)\subset \na$ such that $b_n\to \infty$, and assume that for $M_n:=\sup\{j:\beta_ja_n\geq 1 \}$, we have $\sum_1^{M_n}\beta_j/b_n\to 0$ as $n\to \infty$.
For the child sequence $d^n_j=\floor{\beta_jb_n}$, define for $n$ big enough the degree sequence 
\[
N^n_0=1+\sum_1^{M_n} d^n_j-M_n\qquad\mbox{and}\qquad N^n_{d^n_j} =1 \quad \mbox{for }j\in [M_n].
\]
To obtain the scaling of the BFW given in hypothesis \defin{Hubs}, we compute
\[
\sum_{0}^{d^n_1}\frac{(j-1)^2}{b_n^2}N^n_j
=\sum_1^{M_n} \frac{\floor{\beta_j b_n}^2}{b_n^2}-\frac{s_n}{b_n^2}+\frac{2}{b_n^2}
=\sum_1^{M_n} \frac{\floor{\beta_j b_n}^2}{b_n^2}-\sum_1^{M_n}\frac{\floor{\beta_jb_n}}{b_n^2}+\frac{1}{b_n^2}.
\]The first term is bounded by $\sum \beta_j^2$ and the negative of the second by $\sum_1^{M_n}\beta_j/b_n$. 
Hence, by the hypothesis on $M_n$ and $b_n$, the above display converges to $\sum \beta_j^2$.  
Finally, for every $m\in \na$ we obtain $\varliminf s_n/b_n\geq \sum_1^m \beta_j$, implying the hypothesis of Theorem \ref{teoUnicidadEnFnalGralIntro}. 

An explicit sequence satisfying the previous assumptions is $\beta_j=j^{-\alpha}$ and $b_n=\floor{1/\beta_n}$ for $j,n\in \na$ and $\alpha\in (1/2,1)$.
To see this, note that for $n$ big enough we have for some positive $c$ that $\sum_1^n\beta_j^\gamma\approx c \paren{n^{1-\alpha\gamma}-1}$ which is finite or infinite, for $1/\alpha<\gamma$ or $1/\alpha>\gamma$ respectively.
Therefore, the hypothesis on $\alpha$ implies $\sup\left\{\gamma:\sum \beta_j^\gamma=\infty \right\}=\inf\left\{\gamma:\sum \beta_j^\gamma<\infty \right\}=1/\alpha\in (1,2)$. 

Now, let $\{x \}$ denote the fractional part of $x\in\re$.  
Since $n^{-\alpha}\floor{n^\alpha}=1-n^{-\alpha}\{n^\alpha \}\leq 1$, then $M_n\leq n$. 
It follows that for some positive constant $c$
\[
\frac{1}{b_n}\sum_1^{M_n}\beta_j\leq \frac{1}{\floor{n^\alpha}}\sum_1^{n}\beta_j\leq c\frac{1}{\floor{n^\alpha}}\paren{n^{1-\alpha}-1}\sim \paren{n^{1-2\alpha}-n^{-\alpha}}  \to 0.
\]
\end{example}

\begin{example}[EI process with $\sigma>0$ and jumps]
	Let $X$ be an extremal EI process with parameters $(0,\sigma,\beta)$, where $\sigma>0$, having jumps $(\beta_j,j\in [M])$, with $M\in \na\cup \{\infty\}$ and $\sum_1^M \beta_j^2<\infty$.
	For simplicity assume $\beta_j>\beta_{j+1}>0$ for every $j\in [M-1]$. 
	Following \cite{MR3188597}, consider a degree sequence $\bo{s_n}=(N^n_j)$ of size $s_n=\sum N^n_j$ such that $\Delta_n=\max\{j:N^n_j>0 \}=o(\sqrt{s_n})$ and 
	\[
	\sigma^2=\lim_n\sum_1^{\Delta_n}\frac{j^2}{s_n}N^n_j-1. 
	\]We know from \cite[Lemma 7]{MR3188597} that in this case $b_n=\sqrt{s_n}$, and the rescaled BFW's converge to the Brownian excursion. 
	We add jumps to the degree sequence. 
	
	Consider $M_n\uparrow M$ satisfying $\sum_1^{M_n}\beta_j/\sqrt{s_n}\to 0$ as $n\to \infty$, and define $d^n_j=\floor{\beta_j\sqrt{s_n}}$ for $j\leq M_n$. 
	Construct the degree sequence $\bo{\tilde s_n}=(\tilde N^n_j)$ as
	\[\tilde N^n_i= \begin{cases} 
	N^n_0+\sum_1^{M_n}[d^n_j-1] & i=0 \\
	N^n_i & 1\leq i\leq \Delta_n\\
	1 & d^n_{M_n}\leq i\leq d^n_1,
	\end{cases}
	\]noting that $\Delta_n<d^n_{M_n}$ for $n$ big enough. 
	If $\tilde  s_n$ is the size of $\bo{\tilde s_n}$, then 
	\[
	\tilde s_n/s_n-1=\sum_1^{M_n}d^n_j/s_n\leq \frac{1}{\sqrt{s_n}}\sum_1^{M_n} \beta_j \to 0
	\]by hypothesis.
	Finally, since for the child sequence $(\tilde d^n_j)$ of $\bo{\tilde s_n}$ we have
	\[
	\sum \frac{[j-1]^2}{\tilde s_n}\tilde N^n_j
	=\sum_1^{\Delta_n} \frac{j^2}{\tilde s_n}N^n_j-1+\sum_1^{M_n} \frac{\floor{\beta_j \sqrt{s_n}}^2}{\tilde s_n}+\frac{2}{\tilde s_n}
	\leq \frac{s_n}{\tilde s_n}\sum_1^{\Delta_n} \frac{j^2}{s_n}N^n_j-1+\frac{s_n}{\tilde s_n}\sum_1^{\infty} \beta_j^2+1,
	\]using dominated convergence the left-hand side converges to $\sigma^2+\sum_1^M \beta_j^2$, and thus, the scaling is $\tilde b_n=\sqrt{\tilde s_n}$.
\end{example}

\section*{Acknowledgements}
\small{Research supported by CoNaCyT grant FC 2016 1946 and UNAM-DGAPA-PAPIIT grants IN115217 and IN114720.}
\bibliography{GenBib}
\bibliographystyle{amsalpha}
\end{document}